\title{Decay rates for the damped wave equation with finite regularity damping}
\author{Perry Kleinhenz}
\date{}							
\theoremstyle{definition}
\newtheorem{definition}{Definition}
\theoremstyle{theorem}
\newtheorem{theorem}{Theorem}[section]
\newtheorem{lemma}[theorem]{Lemma}
\newtheorem{proposition}[theorem]{Proposition}
\newcommand{\Bc}{\mathcal{B}}
\newcommand{\Rb}{\mathbb{R}}
\newcommand{\Zb}{\mathbb{Z}}
\newcommand{\Rn}{\mathbb{R}^n}
\newcommand{\N}{\mathbb{N}}
\newcommand{\Dc}{\mathcal{D}}
\newcommand{\F}{\mathcal{F}}
\newcommand{\Nb}{\mathbb{N}}
\newcommand{\intr}{\int_{\Rb}}
\newcommand{\ra}{\rightarrow}
\newcommand{\<}{\left\langle}
\renewcommand{\>}{\right\rangle}
\newcommand{\e}{\varepsilon}
\renewcommand{\d}{\delta}
\newcommand{\nm}[1]{\left| \left| #1 \right| \right|}
\newcommand{\lp}[2]{ \nm{#1}_{L^{#2}}}
\newcommand{\hp}[2]{\nm{#1}_{H^{#2}}}
\newcommand{\ltwo}[1]{\lp{#1}{2}}
\newcommand{\Sw}[1]{\nm{#1}_{S_w}}
\newcommand{\Del}{\Delta}
\newcommand{\Cs}{C^{\infty}_0}
\newcommand{\p}{\partial}
\newcommand{\Sc}{\mathcal{S}}
\newcommand{\Ci}{C^{\infty}}
\newcommand{\supp}{\text{supp }}
\newcommand{\T}{\mathbb{T}}
\newcommand{\Ac}{\mathcal{A}}
\newcommand{\Ls}{\mathscr{L}}
\newcommand{\ti}{\widetilde}
\newcommand{\Smr}{S^m_{\rho}(\Ps) }
\newcommand{\Smrp}{S^{m'}_{\rho}(\Ps)}
\newcommand{\Smrpp}{S^{m+m'}_{\rho}(\Ps)}
\newcommand{\lpfu}{\lp{f}{2}\lp{u}{2}}
\newcommand{\Olt}{O_{L^2\ra L^2}}
\newcommand{\olt}{o_{L^2 \ra L^2}}
\newcommand{\Op}{\text{Op}}
\newcommand{\Sb}{\mathbb{S}}
\newcommand{\Ps}{T^* \mathbb{S}^1}
\newcommand{\wtbk}{{b_{k}}}
\newcommand{\bkl}{b_{k,l}}
\newcommand{\bklt}{\ti{b_{k,l}}}
\newcommand{\tib}{\ti{b}}
\newcommand{\tm}{\tau_{min}}
\newcommand{\bigbadestimatefirstterm}{\frac{h^{-\tau}}{\beta}}
\newcommand{\Wkinf}{W^{k_0,\infty}}
\newcommand{\dampingmaincommutatorlemmaeq}{h^{3-\gamma} \left| \<\Op(\chi \psi W) u, u\>\right| \leq C h^{3}\lpfu+ C h^{2+7\tau-\gamma} \lp{f}{2}^2 + o(h^3) \lp{u}{2}^2.}
\newcommand{\awkwardtermmaincommutatorlemmaeq}{2h \left| \<\Op(\xi^2 \psi(\xi h^{\tau-1}) x \chi'(x))u ,u\> \right|\leq Ch^{3-2\tau+\gamma}\lpfu + O(h^{\infty}) \lp{u}{2}^2.}
\newcommand{\chipsiPintofeq}{h |\<\Op(\chi\psi p)u, u\>| \leq Ch^3 \lpfu + C h^{2+6\tau-\gamma}\lp{f}{2}^2 +o(h^3) \lp{u}{2}^2.}
\newcommand{\boundbelowJeq}{h^3 \beta |\<\Op(\chi \psi)u, u\>| \geq h^3 \beta \lp{Ju}{2}^2 - C \beta h^{3+2\tau} \lp{u}{2}^2.}
\newcommand{\doublecommutatoreq}{[[h^{1-\tau}A, v_j], v_j] = \sum_{l=1}^{k_0-6} \sum_{k=1}^{k_0-6} \frac{i^{m+l}}{2^{k+l} k! l!} h^{\tau(k+l)+1-\tau} (1-(-1)^k)(1-(-1)^l) \Op(\p_x^k v_j \p_x^l v_j a^{(k+l)}) + o(h^3)}
\begin{document}
\maketitle

\begin{abstract}
Decay rates for the energy of solutions of the damped wave equation on the torus are studied. In particular, damping invariant in one direction and equal to a sum of squares of nonnegative functions with a particular number of derivatives of regularity is considered. For such damping energy decays at rate $1/t^{2/3}$. If additional regularity is assumed the decay rate improves. When such a damping is smooth the energy decays at $1/t^{4/5-\d}$. The proof uses a positive commutator argument and relies on a pseudodifferential calculus for low regularity symbols. 
\end{abstract}

\section{Introduction}
Let $W$ be a bounded, nonnegative damping function on a compact Riemannian manifold $M$, and let $v$ solve
\begin{equation*} \label{DWE}
\begin{cases}
\p_t^2  v - \Del v + W(x) \p_t v = 0 & t>0,  \\
(v,\p_t v) |_{t=0} = (v_0, v_1) \in \Ci(M) \times \Ci(M) & t=0. \\
\end{cases}
\end{equation*}
The primary object of study in this paper is the energy 
$$
E(v,t) = \frac{1}{2} \int |\nabla v|^2 +  |\p_t v|^2  dx.
$$
When $W$ is continuous it is classical that uniform stabilization is equivalent to geometric control by the positive set of the damping. That is $E(t) \leq C r(t) E(0)$, with $r(t) \ra 0$ as $t \ra \infty$, if and only if there exists $L$, such that all geodesics of length at least $L$ intersect $\{W>0\}$. Furthermore, in this case the optimal $r(t)$ is exponentially decaying in $t$. 

When the geometric control condition does not hold decay is instead of the form. 
\begin{equation}\label{e:endecgen}
E(t)^{1/2} \leq C r(t) \left(\hp{v_0}{2} + \hp{v_1}{1}  \right).
\end{equation}
Then the optimal $r(t)$ depends on the geometry of $M$ and $\{W>0\}$, as well as properties of $W$ in a neighborhood of $\{W=0\}$. This paper explores this dependence for translation invariant damping functions on the torus, and proves decay of the form  
\begin{equation}\label{e:endec}
E(t)^{1/2} \leq C (1+t)^{-\alpha} \left(\hp{v_0}{2} +  \hp{v_1}{1} \right).
\end{equation}
Such decay is guaranteed on the torus with $\alpha=1/2$ when $\{W>0\}$ is open and nonempty by \cite{AnantharamanLeautaud2014}.

First, when the damping is a sum of squares of sufficiently regular $y$-invariant functions there is an improved decay rate. 
\begin{theorem}\label{simpletheorem}
Let $M$ be the torus $(\Rb/2\pi \Zb)_x \times (\Rb/2\pi \Zb)_y$. Suppose $W(x,y)=W(x)$ and satisfies 
\begin{enumerate}
	\item For some  $\sigma \in (0,\pi), W$ is bounded below by a positive constant for $x \in[-\pi, \pi] \backslash [-\sigma, \sigma]$,
	\item There exists $\sigma_1 \in (0, \pi-\sigma)$ and there exist functions $v_j(x) \geq 0, v_j \in W^{9,\infty}(-\sigma-\sigma_1, \sigma+\sigma_1)$, such that $W(x)=\sum_j v_j(x)^2$ on $(-\sigma-\sigma_1, \sigma+\sigma_1)$.
\end{enumerate}
Then there exists $C$ such that \eqref{e:endec} holds with $\alpha = \frac{2}{3}$. 
\end{theorem}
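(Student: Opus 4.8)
The plan is to reduce, by standard semigroup machinery, the energy estimate \eqref{e:endec} to a resolvent bound on the imaginary axis, and then to establish that bound by a semiclassical positive-commutator argument adapted to the sum-of-squares form of $W$. By the Borichev--Tomilov characterization of polynomial decay, \eqref{e:endec} with $\alpha = \tfrac23$ is equivalent to the resolvent bound $\nm{(\Ac - is)^{-1}}_{\Hc\to\Hc} = O(|s|^{3/2})$ as $|s|\to\infty$, where $\Ac$ is the damped-wave generator on $\Hc = H^1\times L^2$ (the spectral requirement $i\Rb \subset \rho(\Ac)$ on mean-zero data being classical here, as $\{W>0\}$ is nonempty and open). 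Unwinding the resolvent as in \cite{AnantharamanLeautaud2014} and treating the mean-zero Laplacian separately, this reduces to the stationary estimate $\ltwo{u} \le C|s|^{1/2}\ltwo{(\Del + s^2 - isW)u}$ for $|s|$ large --- the $|s|^{1/2}$ loss being exactly what the rate $\tfrac23$ costs, just as \cite{AnantharamanLeautaud2014} obtain the rate $\tfrac12$ from an $|s|$ loss. Setting $s = 1/h$, rescaling, and using the $y$-invariance of $W$ to separate the Fourier modes $e^{i\eta y}$, this becomes a semiclassical estimate for the one-dimensional operators $P_h = -h^2\p_x^2 + (h^2\eta^2 - 1) + ihW(x)$, uniform in $h$ and in $\eta$.

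I would then microlocalize into three pieces. Off the characteristic set $\{\xi^2 + h^2\eta^2 = 1\}$, $P_h$ is semiclassically elliptic and $u$ is controlled directly. Over $\{|x| > \sigma\}$, where $W$ is bounded below, the identity $h\<Wu,u\> = \operatorname{Im}\<P_h u, u\> \le \ltwo{P_h u}\,\ltwo{u}$ together with $W = \sum_j v_j^2$ controls $u$ there. The only difficult region is a neighbourhood of the characteristic set over the undamped strip $\{|x|\le\sigma\}$ with small $x$-frequency --- the modes with $h^2\eta^2$ near $1$, whose rays run nearly parallel to the undamped direction. There I would run a positive-commutator argument with a multiplier $A = \Op(\chi(x)\psi(\xi h^{\tau-1}))$, with $\chi$ supported in the strip, $\psi$ cutting off to $|\xi| \lesssim h^{1-\tau}$, and $\tau > 0$ a scale to be optimized: the principal symbol of $\tfrac{i}{h}[P_h, A]$ is $\sim 2\xi\chi'(x)\psi$, favourably signed off the supports of the first two pieces, while the lower-order contributions enter through $[W, A]$ and the double commutators $[[A, v_j], v_j]$.

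The crux, and the point where $W = \sum_j v_j^2$ with $v_j \in W^{9,\infty}$ is used, is that these lower-order terms can be expanded in derivatives of the \emph{square roots} $v_j$ rather than of $W^{1/2}$, via the identity
\[ \singlecommuteeq \]
The double commutator $[[h^{1-\tau}A, v_j], v_j]$ admits an analogous expansion, as a sum of terms $\p_x^k v_j \, \p_x^l v_j \, a^{(k+l)}$, and the number of derivatives of the $v_j$ that these expansions consume is precisely what pins down the regularity threshold $9$. Assembling the contributions yields commutator estimates of the form
\[ \dampingmaincommutatorlemmaeq \]
Here $f = P_h u$, and $\gamma > 0$ is a further parameter recording the cost of mollifying the finite-regularity damping at a small scale; one also needs a lower bound of the form
\[ \boundbelowJeq \]
where $J$ is a cutoff isolating the dangerous region and $\beta > 0$ is yet another parameter. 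Feeding these into the commutator identity alongside the elliptic and $\{|x|>\sigma\}$ estimates, and optimizing $\tau$, $\gamma$, $\beta$ so as to balance the error exponents against the base scale $h^3$, produces exactly the loss corresponding to $\alpha = \tfrac23$.

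The main obstacle is that every symbol in play --- $\chi\psi W$, $\chi\psi v_j$, and those generated by the commutator expansions --- has only finite regularity, so none of the underlying symbolic calculus is off-the-shelf: the compositions and adjoints, the sharp G\aa rding and Fefferman--Phong-type positivity used to turn the commutator into a nonnegative quantity, and the control of all remainders must be carried out inside the pseudodifferential calculus for $W^{k,\infty}$ symbols built earlier in the paper, with enough precision that each error term is genuinely $o(h^3)\ltwo{u}^2$ and does not overwhelm the main term. A secondary technical point is patching the three microlocal regions together and reassembling the estimate over the modes $\eta$ uniformly in $h$.
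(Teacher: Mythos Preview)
Your overall architecture is correct and matches the paper: Borichev--Tomilov reduces \eqref{e:endec} to a resolvent bound, the $y$-invariance reduces to a one-dimensional problem indexed by the vertical mode, and one microlocalizes into damped, elliptic, and propagating regions, using a positive-commutator argument and the sum-of-squares structure in the last.  (In the paper, Theorem~\ref{simpletheorem} is literally the case $k_0=9$ of Theorem~\ref{maintheorem}, with $\tm=1$ giving $\alpha=2/3$.)

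There is, however, a genuine gap in your commutator step.  With your multiplier $A=\Op(\chi(x)\psi(\xi h^{\tau-1}))$, the principal symbol of $\tfrac{i}{h}[-h^2\p_x^2,A]$ is $2\xi\,\chi'(x)\,\psi$, which is identically zero on the propagating region (where $\chi\equiv 1$) and has no sign where $\chi'\neq 0$.  So your commutant produces no positivity at all in the region you are trying to control; the claim that it is ``favourably signed'' is incorrect.  The paper instead takes $a = x\chi(x)\cdot(\xi h^{\tau-1})\psi(\xi h^{\tau-1})$: the extra factor $x\xi$ makes $\tfrac{i}{h}[-h^2\p_x^2,h^{1-\tau}A]$ have principal part $2\xi^2(\chi+x\chi')\psi$, and then substituting $\xi^2=p-ih^{2-\gamma}W+h^2\beta$ extracts the genuinely positive piece $2h^2\beta\,\chi\psi$, which after writing $\Op(\chi\psi)=J^*J+O(h^{2\tau})$ yields the lower bound $h^3\beta\lp{Ju}{2}^2$.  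The $x\chi'$ and $p$ and $W$ pieces land in the damped or elliptic regions and are absorbed.  Without this mechanism there is no source for the $\beta\lp{Ju}{2}^2$ term and the argument does not close.

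Two smaller corrections.  The parameter $\gamma\in\{1,2\}$ is not a mollification scale: it is the semiclassical rescaling $h=q^{-1/\gamma}$, chosen differently in different ranges of the horizontal energy $\beta=q^2-k^2$ (with $k$ the vertical Fourier mode).  Likewise $\beta$ is not a free parameter to be optimized but is fixed by the mode; what one optimizes is the pairing $(\tau,\gamma)$ across the three $\beta$-regimes to get the uniform $q^{\tm}$ bound.  Finally, no G\aa rding or Fefferman--Phong input is needed: the positivity is obtained directly from $\Op(\chi\psi)=J^*J+O(h^{2\tau})$ as above.
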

If the damping is instead smooth and $y$-invariant there is an additional improvement. 
\begin{theorem}\label{smooththeorem}
Let $M$ be the torus $(\Rb/2\pi \Zb)_x \times (\Rb/2\pi \Zb)_y$. Suppose $W(x,y)=W(x)$ and satisfies 
\begin{enumerate}
	\item For some  $\sigma \in (0,\pi), W$ is bounded below by a positive constant for $x \in[-\pi, \pi] \backslash [-\sigma, \sigma]$,
	\item $W \in \Ci(\Rb/2 \pi \Zb)$.
\end{enumerate}
Then for all $\e>0$ there exists $C$ such that \eqref{e:endec} holds with $\alpha = \frac{4}{5}-\e$. 
\end{theorem}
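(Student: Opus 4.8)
The plan is to pass from energy decay to a high--frequency resolvent estimate and then prove that estimate by a positive commutator argument, running the same scheme that yields Theorem~\ref{simpletheorem} but carrying the pseudodifferential expansions to higher order, which is legitimate precisely because $W\in\Ci$. First, by the Borichev--Tomilov/Batty--Duyckaerts characterization of polynomial decay applied to the damped--wave generator $\Ac$ on the energy space (as in the $\alpha=1/2$ result of \cite{AnantharamanLeautaud2014}), the bound \eqref{e:endec} with exponent $\alpha$ follows from the stationary estimate
$$
\left\|\left(-\Del-\lambda^2-i\lambda W\right)^{-1}\right\|_{\Ltwo(M)\ra\Ltwo(M)}\le C|\lambda|^{\frac1\alpha-2},\qquad|\lambda|\ra\infty,
$$
whose right side for $\alpha=\tfrac45-\e$ is $C|\lambda|^{-3/4+\e'}$ with $\e'\ra0$ as $\e\ra0$. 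Setting $h=|\lambda|^{-1}$ and multiplying through by $h^2$, it is enough to show $\lp{u}{2}\le Ch^{-5/4-\e'}\nm{(-h^2\Del-1-ihW)u}_{\Ltwo}$ for $h$ small.

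Since $W=W(x)$, expanding $u=\sum_{n\in\Zb}u_n(x)e^{iny}$ decouples this into the one--dimensional family $(-\p_x^2+n^2-\lambda^2-i\lambda W(x))u_n=f_n$ on $(\Rb/2\pi\Zb)_x$, and it suffices to bound $\lp{u_n}{2}$ by $C|\lambda|^{-3/4+\e'}\lp{f_n}{2}$ uniformly in $n$. When $|n^2-\lambda^2|$ exceeds a fixed positive power of $|\lambda|$ the symbol $\xi^2+n^2-\lambda^2$ is invertible on the frequency range carrying $u_n$, and standard one--dimensional arguments (pairing against $u_n$, together with $W\ge0$ and the lower bound for $W$ off $[-\sigma,\sigma]$ in hypothesis~(1) where the damping is needed) give a far stronger bound than required. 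So the analysis reduces to the resonant window $\bigl|\,|n|-|\lambda|\,\bigr|$ small, where rescaling in the variable dual to $x$ turns the operator into a genuinely semiclassical one--dimensional operator $P-i\beta hW$ with $P=-h^2\p_x^2-1$ and an amplified damping strength $\beta=\beta(h)$, the admissible range of $\beta$ recording the depth of the window. Everything from here lives on $\Ps$, where the low--regularity pseudodifferential calculus is developed.

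The heart is a semiclassical resolvent bound for $P-i\beta hW$. One takes a pseudodifferential multiplier $A=\Op(a)$ whose symbol combines a spatial cutoff $\chi(x)$ supported where $W$ is small with a frequency cutoff $\psi(\xi h^{\tau-1})$ at scale $h^\tau$, plus lower--order corrections, arranged so that $\frac{i}{h}[P,A]$ is positive modulo a multiple of $W$. Pairing the equation with $Au$ in $\Ltwo$ and taking real and imaginary parts yields an inequality of the schematic shape
$$
\lp{u}{2}^2\ \lesssim\ \langle Wu,u\rangle\ +\ (\text{commutator remainders})\ +\ \frac{h^{-\tau}}{\beta}\lpfu,
$$
while pairing with $\bar u$ alone and taking imaginary parts gives $\beta h\langle Wu,u\rangle\lesssim\lpfu$; together these close the estimate provided every commutator remainder is negligible at scale $h^3$.

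The remainders come from the Weyl--calculus expansions of $\Op(\chi\psi)P$, of the sandwich $v_j\Op(\chi\psi)v_j$, and of the double commutator $[[h^{1-\tau}A,v_j],v_j]$, with $v_j$ a local square root of $W$: each additional term gains a factor $h^\tau$ but spends one more derivative of $W$. With only $W^{9,\infty}$ available, as in Theorem~\ref{simpletheorem}, the expansions truncate early, the residual terms are merely $O(h^{3-\gamma})$ for some $\gamma>0$, $\tau$ is forced to its minimal admissible value, and $\alpha=\tfrac23$ is the best one gets; when $W\in\Ci$ the expansions run to any order, $\gamma$ can be sent to $0$, the remainders become $o(h^3)$, a strictly larger $\tau$ is admissible, and optimizing the budget --- balancing $\tau$ against the $h$--powers carried by $\langle Wu,u\rangle$, by the spatial--cutoff terms, and by $\beta$ --- gives $\lp{u}{2}\lesssim|\lambda|^{-3/4+\e'}\lp{f}{2}$, that is $\alpha=\tfrac45-\e$. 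The main obstacle is exactly this last, simultaneous bookkeeping: building $A$ and its corrections so that the positive--commutator inequality closes uniformly over the whole resonant window and every admissible $\beta$, tracking every composition and commutator in the low--regularity calculus with explicit $h$--power losses, and picking $\tau$ so that the spatial--cutoff terms, the double--commutator terms and the damping term $\langle Wu,u\rangle$ are all dominated at once; it is this optimization, and the structural ceiling it imposes even for smooth $W$, that pins the exponent at $\tfrac45$.
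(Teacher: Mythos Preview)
Your outline is the skeleton of the paper's proof of Theorem~\ref{maintheorem}, not of Theorem~\ref{smooththeorem}; the paper obtains the latter as an immediate corollary of the former via a result of Bony \cite{Bony2005}: if $W\in C^{2k_0}$ is nonnegative then $W=v_1^2+v_2^2$ with $v_1,v_2\in C^{k_0}$. Since $W\in\Ci$ one may take $k_0$ arbitrarily large, hence $\tm>\max\bigl(\frac{k_0+2}{2k_0-4},\frac{7}{k_0-1}\bigr)$ arbitrarily close to $\tfrac12$, and then $\alpha=\frac{2}{\tm+2}\to\frac45$.

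Your proposal has a genuine gap exactly here. You invoke ``$v_j$ a local square root of $W$'' and then assert that because $W\in\Ci$ the expansions of $[[h^{1-\tau}A,v_j],v_j]$ and $v_j\Op(\chi\psi)v_j$ ``run to any order''. But smoothness of $W$ does \emph{not} give smoothness of any square root: already $W(x)=x^2$ has only a Lipschitz root, and the paper's own motivating example $W\sim e^{-1/x}\sin(1/x)^2$ is smooth with no regular square root. The entire positive--commutator machinery consumes derivatives of the $v_j$, not of $W$ directly; the number of available derivatives of $v_j$ is precisely what caps how small $\tm$ may be taken. Without Bony's decomposition (or some other source of a sum--of--squares with controlled regularity) you have no $v_j$ to feed into the commutator estimates, and the argument does not start. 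Note too that even with Bony the $v_j$ are only $C^{k_0}$, never $\Ci$, so one must rerun the finite--regularity calculus at each $k_0$ and then let $k_0\to\infty$; this is why the $\e$ loss in $\alpha=\tfrac45-\e$ survives.

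Two secondary errors. First, the stationary bound corresponding to decay exponent $\alpha$ is $\|(-\Delta+iqW-q^2)^{-1}\|\le Cq^{1/\alpha-1}$, not $q^{1/\alpha-2}$; for $\alpha=\tfrac45$ the target is $q^{1/4}$, which grows, whereas your $|\lambda|^{-3/4}$ would be a decaying resolvent and would force far more than polynomial energy decay. Second, your parameter direction is reversed: more regularity of the $v_j$ permits a \emph{smaller} admissible $\tau$ (longer expansions at the second--microlocal scale $h^{1-\tau}$, each step gaining only $h^{\tau}$), and it is $\tm\searrow\tfrac12$, not ``a strictly larger $\tau$'', that drives $\alpha\nearrow\tfrac45$.
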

Both of these theorems are actually consequences of the following result. When the damping is a sum of squares of functions with $k_0$ derivatives there is an improved decay rate which depends on $k_0$.  
\begin{theorem}\label{maintheorem}
Let $M$ be the torus $(\Rb/2\pi \Zb)_x \times (\Rb/2\pi \Zb)_y$. Suppose $W(x,y)=W(x)$ and satisfies 
\begin{enumerate}
	\item\label{zerosethyp} For some  $\sigma \in (0,\pi), W$ is bounded below by a positive constant for $x \in[-\pi, \pi] \backslash [-\sigma, \sigma]$,
	\item\label{squaredecomphyp} There exists $k_0 \geq 9, \sigma_1 \in (0, \pi-\sigma)$ and there exist functions $v_j(x) \geq 0, v_j \in \Wkinf(-\sigma-\sigma_1, \sigma+\sigma_1)$, such that $W(x)=\sum_j v_j(x)^2$ on $(-\sigma-\sigma_1, \sigma+\sigma_1)$.
\end{enumerate}
Let $\tm > \max \left(\frac{k_0+2}{2k_0-4}, \frac{7}{k_0-1} \right)$ then there exists $C$ such that \eqref{e:endec} holds with $\alpha = \frac{2}{\tm+2}$. 
\end{theorem}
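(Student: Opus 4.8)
The plan is to convert \eqref{e:endec} into a stationary estimate. By the characterization of polynomial energy decay through resolvent growth on the imaginary axis (Borichev--Tomilov, as used in \cite{AnantharamanLeautaud2014}), \eqref{e:endec} with $\alpha = \tfrac{2}{\tm+2}$ follows once one shows that there exist $C,h_0>0$ with
\begin{equation*}
\|u\|_{L^2(M)} \;\le\; C\,h^{-1-\tm/2}\,\big\|\big(-h^2\Del - 1 - ihW\big)u\big\|_{L^2(M)}, \qquad 0<h\le h_0 .
\end{equation*}
Write $P=-h^2\Del-1-ihW$ and $f=Pu$. Pairing with $u$ and taking the imaginary part gives the basic damping inequality
\begin{equation*}
h\int_M W|u|^2 \;\le\; \|f\|_{L^2}\,\|u\|_{L^2},
\end{equation*}
so, by hypotheses \eqref{zerosethyp} and \eqref{squaredecomphyp}, both $h\,\|u\|_{L^2(\{|x|>\sigma\})}^2\lesssim\|f\|\|u\|$ and $h\sum_j\|v_j u\|_{L^2}^2\lesssim\|f\|\|u\|$. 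Everything that follows is about propagating this weighted control into $\{W=0\}$ at the stated cost in $h$.

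\textbf{Step 2: reduction to the degenerate region.} Since $W$ is $y$-independent, Fourier expansion in $y$ reduces $P$ to the one-parameter family $P^{(n)}=-h^2\p_x^2+\eta^2-1-ihW(x)$ on $\Rb/2\pi\Zb$, with $\eta=hn$, and it is enough to estimate each $P^{(n)}$ uniformly in $n$ and sum. Away from the critical frequencies $n\approx h^{-1}$ (i.e.\ $\eta$ bounded away from $\pm1$) the operator is either semiclassically elliptic or geometrically controlled --- the characteristic set of $\xi^2-(1-\eta^2)$ is $\{\xi=\pm\sqrt{1-\eta^2}\}$, on which $x$ moves at fixed nonzero speed and hence crosses $\{W>0\}$ in bounded time --- so one gets a much stronger $O(h^{-1})$ bound, as in \cite{AnantharamanLeautaud2014}. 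The delicate case is $n\approx h^{-1}$; there, after a microlocal cutoff in $\xi$, the only region not controlled by ellipticity (in $\xi$) or by coercivity of $W$ (on $\{|x|>\sigma\}$) is $\mathcal R=\{|\xi|\lesssim h^{1-\tau}\}\cap\{|x|\le\sigma\}$, where I take the localisation exponent $\tau=\tm$.

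\textbf{Step 3: positive commutator on $\mathcal R$ and the error balance.} On $\mathcal R$ I would run a positive commutator argument with the self-adjoint multiplier $A=\Op(a)$, $a(x,\xi)=\chi(x)\,\psi(\xi h^{\tau-1})$, where $\chi\ge0$ equals $1$ on $[-\sigma,\sigma]$, is supported in $(-\sigma-\sigma_1,\sigma+\sigma_1)$, with $\chi'$ supported where $W$ is bounded below, and $\psi\ge0$ localises $\xi$ to scale $h^{1-\tau}$. Expanding $\langle PAu,u\rangle-\langle APu,u\rangle=\langle[P,A]u,u\rangle$ and using $P^*=P+2ihW$, the terms to track are: the kinetic bracket $\{\xi^2,a\}=2\xi\chi'\psi$, supported where $\chi'\ne0$ and so absorbed via Step 2 and the damping there; a remaining nonnegative principal term (by a sharp G\aa rding inequality) controlling $\|u\|_{L^2(\mathcal R)}$; the source pairing, of size $\lesssim\|f\|\|u\|$; and, decisively, the damping contribution $h\langle WAu,u\rangle$. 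Here the structural hypothesis $W=\sum_j v_j^2$ is essential: I would rewrite $h\langle WAu,u\rangle=h\sum_j\langle v_jAv_ju,u\rangle$ modulo commutators, each commutation of $v_j$ with $\Op(a)$ costing a factor $h^\tau$ (the $\xi$-scale) and iterable only of order $k_0$ times since $v_j\in\Wkinf$, and then bound the leading piece by $h\sum_j\|v_ju\|^2\lesssim\|f\|\|u\|$ from Step 1. Collecting the principal positive term against $\|f\|\|u\|$, the escape term, the $k_0$-fold commutator loss, and the remainders of the low-regularity pseudodifferential calculus (controlled only by a finite power of $h$, not $O(h^\infty)$), the argument closes precisely under the hypothesis $\tm>\max\!\big(\tfrac{k_0+2}{2k_0-4},\,\tfrac{7}{k_0-1}\big)$, yielding the estimate of Step 1 with exponent $1+\tm/2$ and hence \eqref{e:endec} with $\alpha=\tfrac{2}{\tm+2}$.

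\textbf{Main obstacle.} The hard part will be Step 3: making the pseudodifferential calculus quantitative enough to dominate every error term while still extracting a genuinely positive principal term. Because $W$ is merely $\Wkinf$, the smooth symbol calculus is unavailable; the composition and commutator expansions must be carried out explicitly and truncated after about $k_0$ steps, with controlled but non-negligible remainders, and the whole scheme closes only if the localisation scale $h^{1-\tau}$ is tuned so that the $k_0$-fold commutation of the $v_j$, these finitely many remainders, and the escape term are simultaneously beaten by the main term. The two lower bounds on $\tm$ are exactly the constraints this balancing imposes --- one from the number of derivatives of $W$ available, the other from the worst of the symbolic remainders.
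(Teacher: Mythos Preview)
Your outline has the right overall architecture (Borichev--Tomilov reduction, Fourier decomposition in $y$, damping/elliptic/propagating trichotomy, low-regularity calculus with finitely many commutations of the $v_j$), but Step~3 contains a genuine gap: with the commutant you propose, namely $a(x,\xi)=\chi(x)\psi(\xi h^{\tau-1})$, the commutator produces \emph{no} positive principal term on $\mathcal R$. Indeed $\{\xi^2,a\}=-2\xi\chi'\psi$ is supported only where $\chi'\neq0$, and the damping contribution $h\langle(WA+AW)u,u\rangle$ is merely bounded by the damping estimate you already have --- it is not a new positive term. Sharp G\aa rding cannot conjure positivity on $\mathcal R$ out of a symbol that vanishes there. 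What the paper does instead is take an \emph{escape function}
\[
a(x,\xi)=x\,\chi(x)\,\big(\xi h^{\tau-1}\big)\,\psi(\xi h^{\tau-1}),
\]
so that the kinetic commutator $[h^2\partial_x^2,A]$ has principal part $2ih\,\Op\big((\chi+x\chi')\xi^2\psi\big)$; the $\chi\xi^2\psi$ piece is nonnegative on $\mathcal R$, and rewriting $\xi^2=p-ih^{2-\gamma}W+h^2\beta$ extracts the crucial positive term $h^3\beta\,\|\Op(\chi^{1/2}\psi^{1/2})u\|^2$. The $p$ and $W$ pieces become error terms handled by the equation and the damping/elliptic estimates respectively, and the $x\chi'$ piece lands in the damped region. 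Without the factor $x\xi$ (or some other escape-function structure) there is nothing to close against.

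A second, related omission: the positive term you get this way carries a factor of $\beta$ (the ``horizontal energy'' $q^2-k^2$ in the paper's normalisation), so the resulting estimate reads roughly $\|u\|^2\lesssim q^{2\tau/\gamma}\beta^{-2}\|f\|^2$ and is only valid for $\beta\le q^{2\tau/\gamma}$. The paper therefore does \emph{not} fix $\tau=\tm$ once and for all; it introduces a second parameter $\gamma\in\{1,2\}$ in the semiclassical rescaling $h=q^{-1/\gamma}$ and covers the full range $\e_2\le\beta\le q^2$ by three regimes $(\tau,\gamma)\in\{(\tm,2),(3\tm,2),(1,1)\}$, in each of which the $\beta$-dependent bound collapses to $\|u\|^2\lesssim q^{\tm}\|f\|^2$. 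Your single-scale scheme, even after fixing the commutant, would only cover $\beta\lesssim q^{\tm}$, not all of $\beta\le q^2$.
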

\textbf{Remarks} \begin{itemize}
	\item The two constraints for $\tm$ in terms of the regularity $k_0$ are needed to guarantee error terms in composition expansions are small. In particular $\tm>\frac{k_0+2}{2k_0-4}$ is needed to ensure \eqref{eqellipticremainder} holds and $\tm>\frac{7}{k_0-1}$ is needed to ensure \eqref{sharptm2} holds. These constraints are sharp on these inequalities, but are also used in other estimates in the proof. 
	\item Theorem \ref{simpletheorem} is just Theorem \ref{maintheorem} when $k_0=9$. So $\tm$ can be taken $=1$ which gives decay at $\alpha=2/3$. 
	\item On the other hand by \cite{Bony2005} if $W \in C^{2k_0}(a,b)$ then there exist $v_1, v_2 \in C^{k_0}(a,b)$ such that $W=v_1^2 +v_2^2$ on $(a,b)$. Therefore if $W \in C^{2k_0}(-\sigma-\sigma_1, \sigma+\sigma_1)$ it satisfies hypothesis \ref{squaredecomphyp} of the theorem. Theorem \ref{smooththeorem} then follows from Theorem \ref{maintheorem} and the result of Bony. In particular for any fixed $k_0$ there is an appropriate expansion and so $\tm$ can be taken arbitrarily close to $1/2$ which gives decay at $\alpha = 4/5-\delta$. 
\end{itemize}

The equivalence of uniform stabilization and geometric control for continuous damping functions was proved by Ralston \cite{Ralston1969}, and Rauch and Taylor \cite{RauchTaylor1975} (see also \cite{BardosLebeauRauch1992} and \cite{BurqGerard1997}, where $M$ is also allowed to have a boundary). For some more recent finer results concerning discontinuous damping functions, see Burq and G\'erard \cite{BurqGerard2018}.

Decay rates of the form \eqref{e:endecgen} go back to Lebeau \cite{Lebeau1996}. When $W \in C(M)$ is nonnegative and $\{W>0\}$ is open and nonempty, then decay of the form \eqref{e:endecgen} holds with $r(t)=1/\log(2+t)$ in  \cite{Burq1998, Lebeau1996}. Furthermore, this is optimal on spheres and some other surfaces of revolution \cite{Lebeau1996}.  At the other extreme, if $M$ is a negatively curved (or Anosov) surface, and $W \in C^\infty(M)$, $W$ nonnegative and not identically zero, then \eqref{e:endecgen} holds with $r(t)=Ce^{-ct}$ \cite{DyatlovJinNonnenmacher}. 

When $M$ is a torus, these extremes are avoided and the best bounds are polynomially decaying as in \eqref{e:endec}. Anantharaman and L\'eautaud \cite{AnantharamanLeautaud2014} show \eqref{e:endec} holds with $\alpha =1/2$ when $W \in L^\infty$, $W \ge 0$, and $W>0$ on some open set, as a consequence of Schr\"odinger observability/control  \cite{Jaffard1990, Macia2010, BurqZworski2012}. The more recent result of Burq and Zworski on Schr\"odinger observability and control \cite{BurqZworski2019} weakens the final requirement  to merely $W \not \equiv 0$. Anantharaman and L\'eautaud \cite{AnantharamanLeautaud2014} further show that if $\supp W$ does not satisfy the geometric control condition then \eqref{e:endec} cannot hold for any $\alpha>1$. They also show if there exists $C>0$ such that $W$ satisfies $|\nabla W| \leq C W^{1-\e}$ for $\e<1/29$ and $W \in \Wkinf$ for $k_0 \geq 8$ then $\eqref{e:endec}$ holds with $\alpha =1/(1+4\e)$. 

Note that Theorem \ref{maintheorem} improves the dependence between $|\nabla W| \leq W^{1-\e}$ estimates and decay rate with slightly different hypotheses. That is a damping satisfying the hypotheses of Theorem \ref{maintheorem} has $|\nabla W| \leq C W^{1/2}$, which, if the \cite{AnantharamanLeautaud2014} result applied to $\e=1/2$, would only give \eqref{e:endec} with $\alpha =1/3$, no better than the generic upper bound, whereas Theorem \ref{maintheorem} gives \eqref{e:endec} with at least $\alpha =2/3$. 

Additionally, because of the result in \cite{Bony2005}, Theorem \ref{maintheorem} applies to sufficiently regular damping, which is invariant in one direction, without additional hypotheses. In particular \cite{AnantharamanLeautaud2014} mention that their results do not give an improvement over the Schr\"odinger observability bound for smooth damping vanishing like $W=e^{-1/x} \sin(1/x)^2$, while Theorem \ref{maintheorem} does. 

For earlier work on the square and partially rectangular domains see \cite{LiuRao2005} and  \cite{BurqHitrik2007} respectively, and for  polynomial decay rates in the setting of a degenerately hyperbolic undamped set, see \cite{csvw}. 

In \cite{Kleinhenz2019}, it was shown that if $W=(|x|-\sigma)_+^{\beta}$ near $[-\sigma,\sigma]$, then \eqref{e:endec} holds with  $\alpha=(\beta+2)/(\beta+4)$ and cannot hold for all solutions with $\alpha>(\beta+2)/(\beta+3)$. In the case of constant damping on a strip the result that \eqref{e:endec} holds with  $\alpha = 2/3$ is due to Stahn \cite{Stahn2017}, and the result that it does not hold for $\alpha>2/3$ is due to Nonnenmacher \cite{AnantharamanLeautaud2014}. In \cite{DatchevKleinhenz2019} it was shown that for $W \sim (|x|-\sigma)_+^{\beta}$ near $[-\sigma,\sigma],$ \eqref{e:endec} holds with $\alpha=(\beta+2)/(\beta+3)$, which is sharp when $W=(|x|-\sigma)_+^{\beta}$ near $[-\sigma,\sigma]$. 

These results along with Theorem \ref{maintheorem} suggest that sharp decay rate on the torus could be determined by the regularity of the damping at the boundary of its support. The other likely alternative is that the sharp decay rate is determined by the value of $\e$ for which $W$ satisfies $|\nabla W| \leq C W^{1-\e}$. Although the sharp decay rate for polynomial damping $W=(|x|-\sigma)_+^{\beta}$ depends on $\beta$, this does not disambiguate between these cases as $W \in W^{\beta, \infty}$ and $W$ satisfies $|\nabla W| \leq W^{1-1/\beta}$. A good candidate for distinguishing these is $W$ smooth and vanishing like $e^{-1/x} \sin(1/x)^2$, as it only satisfies $|\nabla W| \leq C W^{1/2}$. 

If regularity determines the sharp decay rate for any $\d>0$ such an oscillating damping should decay at $1/t^{1-\d}$ as there are other smooth dampings which decay this fast. As in \cite{AnantharamanLeautaud2014}, a smooth damping vanishing like $e^{-1/x}$ satisfies $|\nabla W| \leq C W^{1-\e}$ for any $\e>0$ and so for any $\d>0$ decays at $1/t^{1-\d}$. If on the other hand the derivative bound condition $|\nabla W| \leq C W^{1-\e}$ determines the sharp decay rate, the fact that $W=(|x|-\sigma)_+^2$ also satisfies $|\nabla W| \leq W^{1/2}$ and has solutions which decay no faster than $1/t^{4/5}$, means an oscillating damping also should have solutions which decay no faster than $1/t^{4/5}$. Theorem \ref{maintheorem} does not guarantee or rule out either of these, so resolving this question would be an interesting area for future work. 


\textbf{Acknowledgements} 
I would like to thank Jared Wunsch and Oran Gannot for helpful conversations and comments on early drafts. I would also like to thank the anonymous referee for their helpful comments, which improved the clarity of the paper and led to an improvement in the overall result. I would also like to thank Andras Vasy for helpful conversations while I was improving the result. I was partially supported by the National Science Foundation grant RTG: Analysis on Manifolds at Northwestern University. 

\subsection{Outline of Proof}
By a Fourier transform in time, it is enough to study the associated stationary problem. More precisely, by Theorem 2.4 of \cite{BorichevTomilov2010}, as formulated in Proposition 2.4 of \cite{AnantharamanLeautaud2014}, decay with $\alpha=\frac{2}{\tm+2}$ follows from showing that there are constants $C, q_0 >0$ such that, for any $q\geq q_0$, 
\begin{equation*}
\nm{(-\Del +iqW -q^2)^{-1}}_{L^2(\T^2) \ra L^2(\T^2)} \leq C q^{1/\alpha-1}=Cq^{\tm/2}.
\end{equation*}
Because the damping $W$ depends only on $x$ this can be reduced to a 1 dimensional problem by expanding in a Fourier series in the $y$ variable. Let $k$ be the vertical Fourier mode, set $\beta=q^2-k^2,$ take $f \in L^2(\Rb/2\pi \Zb)$ and consider $u \in H^2(\Rb/2\pi \Zb)$ solving 
\begin{equation}\label{1dDWE}
-u''+iqWu - \beta u = f.
\end{equation}
Then it is enough to show that there are $C, q_0>0,$ such that for any $f,$ any $q \geq q_0$ and any real $\beta \leq q^2$,  if $u$ solves \eqref{1dDWE} then
\begin{equation*}
\int |u|^2 \leq C q^{\tm} \int |f|^2. 
\end{equation*}
Here, and below, all integrals are over $\Rb/ 2\pi \Zb$. A more precise dependence on $\beta$ is obtained, for any $\e_1, \e_2>0$ there exists a constant $C$ such that 
\begin{equation}\label{lowenergy}
\int |u|^2 \leq C \int |f|^2, \quad \text{ when } \beta < \frac{\pi^2}{16(\sigma+\e_1)^2}, \quad q \geq q_0,
\end{equation}
and 
\begin{equation}\label{highenergy}
\int |u|^2 \leq C q^{\tm} \int|f|^2, \quad \text{ when } \e_2 < \beta \leq q^2, \quad q \geq q_0.
\end{equation}
It is clear that for $\e_1,\e_2$ small enough, \eqref{lowenergy} and \eqref{highenergy} cover all $\beta \leq q^2$. This $\beta$ can be thought of as the ``horizontal energy" of the solution. The larger it is, the larger $u$ is relative to $q$ in the $\xi$ direction in phase space. 

Equation \eqref{lowenergy} is the low horizontal energy case and is proved in section \ref{lowenergysection}. Equation \eqref{highenergy} is the high horizontal energy case and is the main estimate. It follows by an elliptic estimate and a positive commutator argument. Section \ref{highenergyoutline} contains an outline of the proof of \eqref{highenergy}, and sections \ref{ellipticsection} and \ref{mainestimateproof} contain proofs of the subsidiary estimates in the proof of \eqref{highenergy}. Appendix \ref{pseudosection} contains some important facts about pseudodifferential operators with finite regularity symbols.

The following is a frequently invoked and important estimate.
\begin{lemma} 
For any $\beta \in \Rb, q>0$ and $u,f$ solving \eqref{1dDWE} 
\begin{equation}\label{dampest}
\int W |u|^2 \leq q^{-1} \int |f u|. 
\end{equation}
\end{lemma}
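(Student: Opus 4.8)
The plan is to multiply the equation \eqref{1dDWE} by $\bar u$ and integrate over $\Rb/2\pi\Zb$, then take imaginary parts. Concretely, starting from $-u''+iqWu-\beta u=f$, I would compute
\[
\int(-u''+iqWu-\beta u)\bar u = \int f\bar u.
\]
Integrating by parts in the first term gives $\int |u'|^2$, which is real, and $\beta\int|u|^2$ is also real since $\beta$ is real; the middle term contributes $iq\int W|u|^2$. Hence taking the imaginary part of both sides annihilates the $\int|u'|^2$ and $\beta\int|u|^2$ contributions and leaves
\[
q\int W|u|^2 = \mathrm{Im}\int f\bar u.
\]

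From here the estimate is immediate: $\left|\mathrm{Im}\int f\bar u\right| \le \int |f\bar u| = \int |fu|$, and dividing by $q>0$ yields
\[
\int W|u|^2 \le q^{-1}\int |fu|,
\]
which is exactly \eqref{dampest}. Since $W\ge 0$, the left-hand side is a genuine nonnegative quantity, so no absolute values are needed there.

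There is essentially no obstacle here; the only points requiring a word of care are the justification of the integration by parts (legitimate since $u\in H^2(\Rb/2\pi\Zb)$ and there are no boundary terms on the circle) and the fact that $\int|u'|^2$ and $\beta\int|u|^2$ are real so that they drop out when taking imaginary parts. This is the standard energy identity for the stationary damped problem, and the lemma is used repeatedly in the sequel precisely because it converts information about $f$ and $u$ into control of the damped portion $\int W|u|^2$ of the mass of $u$.
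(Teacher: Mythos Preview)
Your proof is correct and follows exactly the same approach as the paper: multiply \eqref{1dDWE} by $\bar u$, integrate, and take the imaginary part, using integration by parts to see the $-u''\bar u$ contribution is real. You have simply supplied more detail than the paper's one-line proof.
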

\begin{proof}
Multiply \eqref{1dDWE} by $\bar{u}$ then take the imaginary part, integrating by parts to see that the term $\<\Delta u, u\>$ is real. 
\end{proof}

\section{Proof of low horizontal energy estimate \eqref{lowenergy}}\label{lowenergysection}
\begin{proof}
To prove \eqref{lowenergy} multiply \eqref{1dDWE} by $\bar{u}$ and a nonnegative function $b_{\e_1} \in \Ci(\Rb/ 2\pi \Zb)$ with
$$
b_{\e_1}(x) = \begin{cases}
\cos\left( \frac{\pi}{2(\sigma+\e_1)}x \right) & |x| < \sigma + \e_1/2, \\
0 &|x| > \sigma+\e_1.
\end{cases}
$$
Then integrate and take the real part to obtain
$$
-Re \int b_{\e_1} u'' \bar{u} - \beta \int b_{\e_1} |u|^2 = Re \int b_{\e_1} f \bar{u}.
$$
Integrating by parts once gives
$$
 \int  b_{\e_1}  |u'|^2  +Re  \int u b_{\e_1}' \bar{u}' - \beta \int b_{\e_1} |u|^2 = Re \int b_{\e_1} f \bar{u}.
$$
Integrating by parts the $u b_{\e_1}' \bar{u}'$ term again and taking advantage of the $Re$ gives 
\begin{equation}\label{lowenergyadd}
\int b_{\e_1} |u'|^2 + \int \left( - \frac{b_{\e_1}''}{2} - \beta b_{\e_1}\right)|u|^2 = Re \int b_{\e_1} f \bar{u}.
\end{equation}
Now note that $-\frac{b_{\e_1}''}{2}=\frac{\pi^2}{8(\sigma+\e_1)^2} b_{\e_1}$ for $|x|<\sigma+\frac{\e_1}{2}$. Thus for $\beta < \frac{\pi^2}{16(\sigma+\e_1)^2}$ 
$$-\frac{b_{\e_1}}{2} - \beta b_{\e_1}>c \text{ on } |x|<\sigma+\frac{\e_1}{2}.$$
So adding a multiple of \eqref{dampest}, the damping estimate, to \eqref{lowenergyadd} gives 
$$
\int |u|^2 \leq \left(C_{\e_1}+\frac{1}{q}\right) \int |f u| \leq C_{\e_1} \left( \int|f|^2 \right)^{1/2} \left( \int |u|^2 \right)^{1/2}.
$$
Dividing both sides by $\left(\int |u|^2\right)^{1/2}$ gives exactly \eqref{lowenergy}.
\end{proof}

\section{Proof of high horizontal energy estimate \eqref{highenergy}}\label{highenergyoutline}
Now that the proof of \eqref{lowenergy} is complete for $\beta< \frac{\pi^2}{16(\sigma+ \e_1)^2}$ it remains to show \eqref{highenergy} for $\e_2< \beta \leq q^2$. This estimate will actually be assembled from estimates on second microlocalized regions of phase space, in order to do so I take a semiclassical rescaling. Let $\gamma \in \{1, 2\}$, then divide both sides of \eqref{1dDWE} by $q^{2/\gamma}$ and set $h=q^{-1/\gamma}$  
\begin{equation}\label{h1dDWE}
P u = (-h^2 \p_x^2 + i h^{2-\gamma} W - h^2 \beta) u = h^2 f. 
\end{equation}
In this rescaling the bounds $\e_2 < \beta \leq q^2$ become $\e_2 < \beta \leq h^{-2\gamma}$. Let $\tau \in [\tm, 1].$ 
Take $\sigma_1$ as specified by hypothesis 1 and divide phase space $(\Rb/2\pi\Zb)_x \times \Rb_{\xi}=\Ps$ into 3 regions:
\begin{enumerate}
	\item The set where the damping is nontrivial, $\{(x,\xi): \sigma+\sigma_1/4 < |x| < \pi\}$ 
	\item The $h$ dependent elliptic set of $P$, $\{(x,\xi): |\xi|>1.5 h^{1-\tau}\}$
	\item The propagating region, $\{(x,\xi): |x| < \sigma+\sigma_1/2 \text{ and } |\xi|<2h^{1-\tau}\}$.
\end{enumerate}
Although $\gamma$ and $\tau$ can be adjusted freely, for this proof they will have a specific relation. In particular, $(\tau, \gamma)$ will only take values in $(\tm,2), (3\tm,2), (1,1)$.

Note that in composition expansions involving symbols at scale $h^{1-\tau}$ each additional term is only $h^{\tau}$ smaller than the previous one, rather than a full power of $h$. Regardless of the values of $\gamma$ and $\tau$ there is a fixed size error terms in the following calculations must be smaller than. Because of this the number of expansion terms taken (and the number of derivatives of regularity $W$ must have) grows at least like $\frac{1}{\tau}$. $\tm$ is the smallest possible $\tau$ such that $W$ has enough regularity to achieve the desired error size. 

This behavior also clarifies why $\tau$ and $\gamma$ are separate parameters. In Proposition \ref{roll} the resolvent estimate is $\ltwo{u}^2 \leq C \frac{q^{2 \frac{\tau}{\gamma}}}{\beta^2} \ltwo{f}^2$. Because of this a larger $\gamma$ produces a better estimate without decreasing $\tau$, so no additional regularity of $W$ is required. However $\gamma$ cannot always be taken large because the estimate only applies to $\beta<q^{2\tau/\gamma}$ which will not include all of $\beta<q^2$. 

Note that in the case $\gamma=\tau=1$, this is not a second microlocalization as there is no $h$ dependence. The remainder of this section is the statement of the estimates for these regions and then a proof of the high horizontal energy case, \eqref{highenergy}, using those estimates.
The damping estimate is immediate, the elliptic estimate is proved in section \ref{ellipticsection} and the propagation estimate is proved in section \ref{mainestimateproof}.

\subsection{Damping Estimate}
This lemma gives an estimate for the size of $u$ on the set where the damping is nontrivial. 
\begin{lemma}\label{hdamplemma} For any $\beta \in \Rb, h>0$ and $u,f$ solving \eqref{h1dDWE}
\begin{equation}\label{hdampest}
\ltwo{W^{1/2} u}^2 \leq h^{\gamma} \lpfu.
\end{equation}
\end{lemma}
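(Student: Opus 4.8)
The plan is to obtain \eqref{hdampest} as a direct rescaling of the basic damping estimate \eqref{dampest}. First I would observe that dividing \eqref{h1dDWE} by $h^2$ and using the relation $q = h^{-\gamma}$ fixed by the rescaling recovers exactly \eqref{1dDWE} with the \emph{same} right-hand side $f$: the equation $-h^2 \p_x^2 u + i h^{2-\gamma} W u - h^2 \beta u = h^2 f$ becomes $-u'' + i q W u - \beta u = f$. Hence the hypotheses under which \eqref{dampest} was established hold verbatim for $u, f$ as in \eqref{h1dDWE}.

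Then I would simply quote \eqref{dampest}, namely $\int W |u|^2 \leq q^{-1} \int |fu|$, substitute $q^{-1} = h^{\gamma}$, and finish with Cauchy--Schwarz, $\int |fu| \leq \ltwo{f}\ltwo{u}$, together with $\ltwo{W^{1/2}u}^2 = \int W|u|^2$ (valid since $W \geq 0$). This yields $\ltwo{W^{1/2}u}^2 \leq h^{\gamma} \lpfu$, which is \eqref{hdampest}.

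Alternatively, and without invoking \eqref{dampest} at all, the same conclusion follows by repeating the argument of that lemma directly in the rescaled variables: multiply \eqref{h1dDWE} by $\bar u$, integrate over $\Rb/2\pi\Zb$, and take imaginary parts. Integration by parts on the torus makes $\<-h^2 \p_x^2 u, u\> = h^2 \int |u'|^2$ real, and $-h^2\beta \int|u|^2$ is manifestly real, so only the damping term contributes to the imaginary part, giving $h^{2-\gamma}\int W|u|^2 = h^2 \operatorname{Im}\int f\bar u$. Dividing by $h^{2-\gamma}$ and bounding the right-hand side by $h^{\gamma}\int|fu| \leq h^{\gamma}\lpfu$ again produces \eqref{hdampest}.

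There is essentially no obstacle here: the estimate is the $q$-to-$h$ translation of an identity already proved, and the only bookkeeping to watch is the factor $h^2$ relating $Pu$ to $f$ in \eqref{h1dDWE} and the relation $q = h^{-\gamma}$.
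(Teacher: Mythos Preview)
Your proposal is correct and matches the paper's own proof, which simply says the lemma ``follows immediately from the rescaling and \eqref{dampest}.'' Your explicit unwinding of the rescaling $q = h^{-\gamma}$ together with Cauchy--Schwarz is exactly what that one-line justification means.
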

This follows immediately from the rescaling and \eqref{dampest}.

\subsection{Elliptic Estimate}
Throughout the paper $\Op$ refers to the Weyl quantization on the torus (see Appendix \ref{pseudosection} for more details).
These lemmas gives an estimate for the size of $u$ on the $h$ dependent elliptic set of $P$, $\{(x,\xi): |\xi|>1.5 h^{1-\tau}\}$. Note that in order for $P$ to be bounded away from zero on this set $h^2 \beta$ must be smaller than $h^{2-2\tau}$. 

Because of a technicality in the proof there are separate elliptic estimates on $ch^{\tau-1} < \xi < 2$ and $1.5<\xi$. The cause of this is that the low regularity composition result (Lemma \ref{lowregcompose} which is used in the elliptic parametrix construction) requires bounded symbols but $p=\xi^2+ih^{2-\gamma}W-h^2 \beta$ is unbounded for large $\xi$. 

This lemma provides the estimate on $ch^{\tau-1} < \xi < 2$. This estimate has additional importance as it is used multiple times in the proof of the propagation estimate to provide additional control over error terms. 
\begin{lemma}\label{ellipticlemma}
Suppose $W \in \Wkinf$ and $\tau \in [\tm,1]$.
Set $z_1 \in \Ci(\Rb)$ with 
$$
z_1(\xi) = \begin{cases} 0 & |\xi| < 1.25 \\
1 & |\xi| >1.5,
\end{cases}
$$
and set $z_2 \in \Cs(\Rb)$ with 
$$
z_2(\xi) = \begin{cases} 1 & |\xi| < 2 \\
0 & |\xi| > 3,
\end{cases}
$$
then let $z(\xi) = z_1(h^{\tau-1} \xi)z_2(\xi)$ and $Z= \Op(z(\xi))$. There exist $C, h_0>0,$ such that for $h\leq h_0, \beta$ such that $h^2 \beta < h^{2-2\tau}$, and $u,f$ solving \eqref{h1dDWE} then
\begin{equation}\label{ellipticestimate1}
\lp{Zu}{2}^2 \leq Ch^{5\tau-1}\lp{f}{2}^2 + o(h^2) \ltwo{u}^2.
\end{equation}
\end{lemma}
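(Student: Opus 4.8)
The plan is to prove \eqref{ellipticestimate1} via a standard elliptic parametrix construction, carried out with the finite-regularity pseudodifferential calculus of Appendix \ref{pseudosection}. The key point is that on the support of $z$ the symbol $p = \xi^2 + ih^{2-\gamma}W - h^2\beta$ is elliptic: since $z_1(h^{\tau-1}\xi)\neq 0$ forces $|\xi| > 1.25\, h^{1-\tau}$, the principal part satisfies $\xi^2 \gtrsim h^{2-2\tau}$, while the hypothesis $h^2\beta < h^{2-2\tau}$ ensures $h^2\beta$ is a lower-order perturbation and $ih^{2-\gamma}W$ is purely imaginary, hence does not cancel $\xi^2 - h^2\beta > 0$. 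Moreover $z_2$ cuts off $|\xi| < 3$, so $p$ and all the symbols in sight are bounded, which is exactly what the low-regularity composition lemma (Lemma \ref{lowregcompose}) demands. So the symbol class is something like $|\xi|^{-2} \sim h^{2\tau-2}$ times bounded symbols with $k_0$ derivatives of regularity.

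The steps I would carry out: (1) Define a parametrix symbol $e \sim \sum_{j} e_j$ where $e_0 = \tilde z / p$, with $\tilde z$ a cutoff equal to $1$ on $\supp z$ and supported where $p$ is still elliptic, and the higher $e_j$ built recursively so that $\Op(e) P = Z + \Op(r) + (\text{acceptable error})$, where each successive term in the expansion gains a factor of $h^{\tau}$ (not a full $h$), because the symbols live at scale $h^{1-\tau}$. (2) Truncate the expansion after enough terms — here is where $k_0 \geq 9$ and $\tau \geq \tm$ enter: one needs the remainder symbol $r$ to be $o(h^2 / \|Zu\|\cdot\text{stuff})$-small after multiplying by the appropriate powers, and the number of terms needed scales like $1/\tau$, bounded by $k_0$; this is presumably the content of the constraint ensuring \eqref{eqellipticremainder} holds. (3) Apply $\Op(e)$ to the equation $Pu = h^2 f$ to get $Zu = \Op(e)(h^2 f) + (\text{remainder})u$; estimate $\|\Op(e) h^2 f\|_{L^2} \leq C h^2 \|e\|_{\text{op}} \|f\|_{L^2}$. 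Since $\|e\|_{\text{op}} \lesssim h^{2\tau - 2}$ (from $e_0 \sim 1/p \sim h^{2\tau-2}$ being the dominant term), this gives $\|Zu\|_{L^2} \lesssim h^{2\tau} \|f\|_{L^2} + o(h) \|u\|_{L^2}$, and squaring yields $h^{4\tau}\|f\|^2$; but the claimed bound is $h^{5\tau-1}\|f\|^2$, so one actually uses a slightly sharper bookkeeping — likely splitting $z$ further or using that on part of the region $|\xi| \gtrsim 1$ rather than just $\gtrsim h^{1-\tau}$, or exploiting the damping estimate \eqref{hdampest} on the overlap with $\{W > 0\}$ — I would reconcile the exponent by tracking where exactly $|\xi|^{-2}$ is largest and using $5\tau - 1 \geq 4\tau \iff \tau \geq 1$, so actually $h^{5\tau-1} \geq h^{4\tau}$ only when... no: for $\tau \leq 1$, $5\tau - 1 \leq 4\tau$, so $h^{5\tau-1}$ is the \emph{larger} (weaker) bound, consistent with losing a bit in the remainder handling; the honest route is to carry the remainder term's contribution carefully and absorb it, accepting the stated exponent.

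The main obstacle I anticipate is step (2): controlling the parametrix remainder with only finite ($k_0$) regularity. In the smooth calculus one iterates the parametrix construction indefinitely and the remainder is $O(h^\infty)$; here each composition in Lemma \ref{lowregcompose} produces an expansion with a finite number of terms plus an error controlled by a fixed (large but finite) number of derivatives of the symbols, and the symbols are $W$ and its derivatives, so after differentiating enough times one runs out of regularity. Balancing "enough terms in the expansion to make the principal remainder small" against "not more derivatives than $W$ has" is precisely what pins down $\tm > \frac{k_0+2}{2k_0-4}$, and getting the arithmetic of the powers of $h^\tau$ versus $h^{2-2\tau}$ (the size of $1/p$) to close is the delicate part. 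A secondary nuisance is that $\Op$ is Weyl quantization on the torus, so one must make sure the parametrix symbols are genuinely periodic and the mapping/composition estimates from the appendix apply; this is routine given the appendix but must be invoked cleanly. The $o(h^2)\|u\|^2$ term on the right simply collects everything not controlled by $f$, and is legitimate because the remainder operator, after taking enough expansion terms, has operator norm $o(h^2) \cdot h^{2\tau-2} = o(h^{2\tau}) = o(1)$ times... — again the exact power is bookkeeping I would verify but expect to work out to $o(h^2)$ after squaring, as claimed.
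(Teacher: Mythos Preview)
Your outline is the paper's approach: build a finite parametrix $\sum_{j=0}^{k_0-6} Q_j$ for a bounded $\xi$-cutoff version $\Op(\chi p)$ of $P$ (the paper takes $q_0 = h^{2-2\tau} z/p$ and defines $q_j$ recursively so that the expansion of $\sum Q_j \Op(\chi p)$ telescopes to $h^{2-2\tau} Z$), with the remainder $o(h^{3-2\tau})$ controlled exactly by the constraint $\tau > \frac{k_0+2}{2k_0-4}$ via the low-regularity composition Lemma \ref{lowregcompose}.

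Two substantive points are missing from your sketch. First, the parametrix symbols $q_j$ live in $W^{k_0-j} S_{1-\tau,1-\tau}$, not $S_{1-\tau}$: differentiating $1/p$ in $x$ also costs a factor $h^{\tau-1}$, because $\partial_x p = i h^{2-\gamma}\partial_x W$ and one only has $|\partial_x W|/|p| \leq C h^{\tau-1}$ on $\supp z$, obtained from $|\partial_x W| \leq C W^{1/2}$ (the sum-of-squares hypothesis) together with $h^{2-\gamma} W \leq |p|$. This is the paper's Lemma \ref{psymbolest}, and it is precisely where hypothesis \ref{squaredecomphyp} of Theorem \ref{maintheorem} enters the elliptic estimate; it corrects your operator bound to $\|Q_0\|_{L^2\to L^2} \lesssim h^{\tau-1}$ (Lemma \ref{qjl2}) rather than $h^{2\tau-2}$, and forces use of part 3 of Lemma \ref{lowregcompose} in the remainder analysis. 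This extra $x$-loss is what shifts the exponent away from your $h^{4\tau}$. Second, having built the parametrix for $\Op(\chi p)$ you must still connect it to $P$: the paper checks separately (Lemma \ref{qjcomposechip2}) that $\sum Q_j \Op(\chi p) = \sum Q_j \Op(\chi) P + o(h^{3-2\tau})$, using that $\supp q_j \subset \{\chi \equiv 1\}$ so all composition terms involving $\partial_\xi^k \chi$ vanish. Only then can one apply the parametrix to $Pu = h^2 f$.
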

This lemma provides the estimate on $1<\xi$. It does not impose any regularity assumptions on $W$ nor does it have a size restriction on $\beta$
\begin{lemma}\label{ellipticlemmaweak}
Set $\ti{z} \in \Ci(\Rb)$ with 
$$
\ti{z}(\xi) = \begin{cases}
0 & |\xi|<1 \\
1 & |\xi|>1.5,
\end{cases}
$$
and let $\ti{Z}=\Op(\ti{z})$. There exist $C, h_0>0$ such that for $h \leq h_0$ and $u,f$ solving \eqref{h1dDWE} then 
\begin{equation}\label{ellipticestimate2}
\lp{\ti{Z}u}{2}^2 \leq C h^4 \ltwo{f}^2 + C h^{4-\gamma} \lpfu.
\end{equation}
\end{lemma}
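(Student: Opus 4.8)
The proposal rests on the observation that $\ti Z=\Op(\ti z(\xi))$ is an honest Fourier multiplier: it has $L^2\to L^2$ norm at most $1$ and, crucially, commutes \emph{exactly} with the constant-coefficient operator $-h^2\p_x^2-h^2\beta$. So no parametrix construction is needed — which is exactly why no regularity of $W$ and no extra restriction on $\beta$ enter — and one simply transfers the bounded damping term $ih^{2-\gamma}Wu$ to the right-hand side, where it is controlled by the damping estimate \eqref{hdampest}.

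Concretely, I would first rewrite \eqref{h1dDWE} as
\begin{equation*}
(-h^2\p_x^2-h^2\beta)u = h^2 f - i h^{2-\gamma} W u,
\end{equation*}
apply $\ti Z$ to both sides, and commute it past $-h^2\p_x^2-h^2\beta$ to obtain $(-h^2\p_x^2-h^2\beta)\ti Z u = h^2 \ti Z f - i h^{2-\gamma}\ti Z(Wu)$. Taking $L^2$ norms and using $\nm{\ti Z}_{L^2\to L^2}\le 1$ gives $\ltwo{(-h^2\p_x^2-h^2\beta)\ti Z u}^2\le 2h^4\ltwo{f}^2+2h^{4-2\gamma}\ltwo{Wu}^2$. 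For the left-hand side, on $\supp\ti z$ one has $|\xi|\ge 1$, so (using that $h^2\beta\le 1$ in the regime in which this estimate is applied) the symbol $\xi^2-h^2\beta$ is nonnegative there and is bounded below by a fixed positive constant on $\{|\xi|\ge 3/2\}$, where $\ti z\equiv 1$; this yields an elliptic lower bound $\ltwo{(-h^2\p_x^2-h^2\beta)\ti Z u}^2\ge c\,\ltwo{\ti Z u}^2$. Finally \eqref{hdampest} gives $\ltwo{Wu}^2\le\nm{W}_{L^\infty}\int W|u|^2\le\nm{W}_{L^\infty}h^\gamma\lpfu$, so that $h^{4-2\gamma}\ltwo{Wu}^2\le C h^{4-\gamma}\lpfu$; assembling the three bounds produces \eqref{ellipticestimate2}.

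The only step that is more than bookkeeping is the elliptic lower bound near the inner edge $|\xi|=1$ of $\supp\ti z$: when $\beta$ is close to $q^2$ the symbol $\xi^2-h^2\beta$ very nearly vanishes on the part of $\supp\ti z$ where $|\xi|$ is close to $1$, so one cannot simply bound $\xi^2-h^2\beta$ below by a constant there. The fix is to exploit that $\ti z$ vanishes to infinite order at $|\xi|=1$: on the discrete frequency lattice the only modes at which $\xi^2-h^2\beta$ fails to be bounded below lie within $O(h)$ of $|\xi|=1$, and on these $\ti z$ is $O(h^\infty)$, so their total contribution to $\ltwo{\ti Z u}^2$ is negligible and is absorbed into the right-hand side. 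This borderline behavior is also why \eqref{ellipticestimate2} is weaker than the estimate of Lemma~\ref{ellipticlemma}, and why that sharper, $\beta$-restricted estimate is still needed to control frequencies nearer the propagation region.
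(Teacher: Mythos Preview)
Your overall strategy matches the paper's: move $ih^{2-\gamma}Wu$ to the right-hand side as a perturbation, exploit that $\ti{Z}$ is a Fourier multiplier commuting exactly with $P_0=-h^2\p_x^2-h^2\beta$, and control $\ltwo{Wu}^2$ via \eqref{hdampest}. The only substantive difference is how you go from $P_0\ti{Z}u$ back to $\ti{Z}u$.

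Your proposed elliptic lower bound has a genuine gap, and the fix you sketch does not close it. The claim that the modes where $\xi^2-h^2\beta$ fails to be bounded below ``lie within $O(h)$ of $|\xi|=1$'' is false: when $h^2\beta$ is near $1$ the bad set $\{|\xi|\ge 1:\xi^2-h^2\beta<c\}$ is an interval of \emph{fixed} width (about $c/2$), independent of $h$, and on that interval $\ti{z}$ is small but not $O(h^\infty)$. Splitting off that region leaves a residual $C_c\ltwo{u}^2$ term (with $C_c$ small but $h$-independent) that is not present in \eqref{ellipticestimate2}; shrinking the region with $h$ to make this term $o(1)$ forces the constant in front of the main term to blow up.

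The paper sidesteps the lower bound entirely by defining the multiplier $\ti{q_0}(\xi)=\ti{z}(\xi)/(\xi^2-h^2\beta)$ and checking that it lies in $S^0_0$ uniformly. Here the infinite-order vanishing of $\ti{z}$ at $|\xi|=1$ is used in exactly the way you anticipated, but to bound a \emph{quotient} rather than to discard modes: on $\supp\ti{z}$ one has $\xi^2-h^2\beta\ge\xi^2-1\ge|\xi|-1$, while $|\p_\xi^k\ti{z}(\xi)|\le C_{N,k}(|\xi|-1)^N$ for every $N$, so $\ti{q_0}$ and all its $\xi$-derivatives are uniformly bounded. Then $\ti{Q_0}=\Op(\ti{q_0})$ is uniformly bounded on $L^2$, the identity $\ti{Q_0}P_0=\ti{Z}$ is exact between Fourier multipliers, and $\ti{Z}u=h^2\ti{Q_0}f-ih^{2-\gamma}\ti{Q_0}(Wu)$ gives \eqref{ellipticestimate2} directly with no leftover $\ltwo{u}^2$ term.
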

Lemmas \ref{ellipticlemma} and \ref{ellipticlemmaweak} are proved in section \ref{ellipticsection}.

\subsection{Propagation Estimate} 
This lemma gives an estimate for the size of $u$ on the propagating region $\{(x,\xi): |x| < \sigma+\sigma_1/2 \text{ and } |\xi|<2h^{1-\tau}\}$.

Define $\psi \in \Cs(\Rb)$ 
$$
\psi(\xi) = \begin{cases}
1 \text{ on } |\xi|<2 \\
0 \text{ on } |\xi|>3,
\end{cases}
$$ 
and $\chi \in \Cs(-\pi,\pi)$
$$
\chi(x) = \begin{cases}
1 \text{ on } |x|<\sigma+\sigma_1/2 \\
0 \text{ on } |x|>\sigma+\sigma_1,
\end{cases}
$$
where both are chosen to have smooth square roots. 

\begin{lemma}\label{mainestimate} 
Suppose $v_j \in \Wkinf$ and fix $\tau \in [\tm,1]$, $\e_2>0$. Set $J=\Op(\chi^{1/2}(x) \psi^{1/2}(h^{\tau-1}\xi))$. There exist $C,h_0>0,$ such that if $h \leq h_0$ and $\beta$ such that $h^2 \e_2< h^2 \beta < h^{2-2\tau}$,  
then for $u,f$ solving \eqref{h1dDWE}
\begin{align}\label{bigbadestimate}
\lp{Ju}{2}^2 &\leq C \left( \bigbadestimatefirstterm \right) \lp{f}{2} \lp{u}{2} +C  \frac{h^{6\tau-\gamma-1}}{\beta} \lp{f}{2}^2 + o(1) \lp{u}{2}^2.
\end{align}
\end{lemma}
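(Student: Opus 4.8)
The plan is to run a positive commutator argument with an escape function built for the flow generated by the (rescaled) operator $P$ on the propagating region. The natural multiplier is $A=\Op(a)$, with $a$ essentially $a(x,\xi)= x\,\chi(x)\,\psi(h^{\tau-1}\xi)$, so that $\frac{i}{h}[P,A]$ has principal symbol $2\xi^2\psi(h^{\tau-1}\xi)\chi(x) + (\text{terms where }\chi'\neq 0)$ plus a damping contribution of the form $\partial_\xi a \cdot \partial_x W$. First I would compute $\langle \tfrac{i}{h}[P,A]u,u\rangle$ in two ways: on the one hand, pairing with $Pu=h^2f$ produces the right-hand side terms $\frac{h^{-\tau}}{\beta}\lp{f}{2}\lp{u}{2}$ (the weight $h^{-\tau}$ coming from the $h^{\tau-1}$ localization in $\xi$ combined with the $h^{-2}$ from dividing by $h^2$, and the $\beta^{-1}$ from Proposition \ref{roll}-type resolvent control in the elliptic direction); on the other hand, the commutator symbol, after extracting the positive term $2\Op(\xi^2\psi(h^{\tau-1}\xi)\chi)$, which by the sharp Gårding inequality (the lower bound encoded in \texttt{awfullowerboundcomputationeq}) controls $h^2\beta\lp{Ju}{2}^2$ from below, leaves a collection of error terms to be absorbed.

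The error terms fall into three groups, and I would dispatch them in this order. (i) The \emph{cutoff commutator terms} where $\chi'$ is supported: here $|x|>\sigma+\sigma_1/2$, so $W$ is bounded below, and one uses the damping estimate \eqref{hdampest} together with the elliptic estimate \eqref{ellipticestimate1} on $|\xi|>ch^{\tau-1}$ (which is why Lemma \ref{ellipticlemma} is flagged as used "multiple times" in this proof) — this is the content of an estimate like \texttt{awkwardtermmaincommutatorlemmaeq}. (ii) The \emph{damping term} $h^{2-\gamma}\langle\Op(\chi\psi\,\partial_\xi a\,\partial_x W)u,u\rangle$: this is the heart of the matter. Here I would write $W=\sum_j v_j^2$, so $\partial_x W = 2\sum_j v_j v_j'$, and commute: $\langle\Op(\cdots v_j v_j')u,u\rangle$ is rewritten via the single-commutator identity \texttt{singlecommuteeq} and, after a second commutation, the double-commutator expansion \texttt{doublecommutatoreq}, so that each factor $v_j$ sits next to $u$, letting the damping estimate $\lp{v_j u}{2}^2\lesssim h^\gamma\lp{f}{2}\lp{u}{2}$ be applied to the "diagonal" piece; the finitely many expansion error terms are each $O(h^\tau)$ smaller than the previous one, and the constraint $\tau>7/(k_0-1)$ together with $v_j\in W^{k_0,\infty}$ guarantees enough terms can be taken for the remainder to be $o(1)\lp{u}{2}^2$ (this is \texttt{dampingmaincommutatorlemmaeq}, divided by the appropriate power of $h$). (iii) The \emph{lower-order commutator/composition remainders} coming from $\Op(\chi\psi p)$ versus $\Op(\chi\psi)P$, controlled by \texttt{chipsicomposeeq} and \texttt{chipsiPintofeq}; these feed back into the $h^{6\tau-\gamma-1}\beta^{-1}\lp{f}{2}^2$ term and another $o(1)\lp{u}{2}^2$.

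Assembling: I would combine the positive lower bound $h^2\beta\lp{Ju}{2}^2\lesssim \langle\tfrac{i}{h}[P,A]u,u\rangle + \betaconstant h^4\lp{u}{2}^2$ (using \texttt{boundbelowJeq} and \texttt{awfullowerboundcomputationeq}) with the upper bounds on the commutator from groups (i)–(iii), then divide through by $h^2\beta$. Tracking powers: the $f$–$u$ cross term becomes $\frac{h^{-\tau}}{\beta}\lp{f}{2}\lp{u}{2}$, the pure $f^2$ terms collect into $\frac{h^{6\tau-\gamma-1}}{\beta}\lp{f}{2}^2$, and every $u^2$ remainder is $o(1)$ after division since $h^4/(h^2\beta)=h^2/\beta=o(1)$ for $\beta>\e_2$ and the expansion remainders were arranged to beat $h^2\beta$. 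I expect the main obstacle to be group (ii): getting the double-commutator expansion \texttt{doublecommutatoreq} to close requires that the low-regularity symbol calculus of the Appendix applies to products like $\partial_x^k v_j\,\partial_x^l v_j\,a^{(k+l)}$, that the finitely many retained terms with $k,l$ odd are themselves handled (each again by a commutation against a $v_j$ and the damping estimate), and that the tail after $k_0-6$ terms is genuinely $o(h^3)$ — it is precisely the bookkeeping of "how many derivatives of $W$ buy how small an error at scale $h^{1-\tau}$" that forces the stated lower bounds on $\tm$.
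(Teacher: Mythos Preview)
Your overall architecture (positive commutator, errors controlled via damping and elliptic estimates) is right, and you cite nearly all the correct ingredients. But two structural points are misidentified, and without them the argument does not close.

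First, because $P=-h^2\partial_x^2+ih^{2-\gamma}W-h^2\beta$ is not self-adjoint, the object computed is $AP-P^*A$, and the damping enters as the \emph{anticommutator} $ih^{2-\gamma}(AW+WA)$, not as a Poisson-bracket term $\partial_\xi a\,\partial_x W$. Writing $W=\sum v_j^2$ gives the algebraic identity $Av_j^2+v_j^2A=2v_jAv_j+[[A,v_j],v_j]$; the diagonal piece $v_jAv_j$ is controlled by $\|v_ju\|^2\le h^\gamma\|f\|\|u\|$, and the double commutator is the error. Crucially, the retained terms $\Op(\partial_x^kv_j\,\partial_x^lv_j\,a^{(k+l)})$ with $k,l\ge1$ are \emph{not} handled by further commutation against $v_j$ as you propose: since $a^{(k+l)}$ involves $\psi^{(k+l)}$ it is supported in $\{2h^{1-\tau}<|\xi|<3h^{1-\tau}\}$, which lies in the elliptic set, and these terms are instead controlled by conjugating with $Z$ and applying Lemma~\ref{ellipticlemma}. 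This is the source of the $h^{6\tau-\gamma-1}\beta^{-1}\|f\|^2$ term.

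Second, the $\beta$ does not enter via a G\aa rding lower bound on $\Op(\xi^2\chi\psi)$: on $\mathrm{supp}\,\psi$ the symbol $\xi^2$ ranges down to $0$, so no such lower bound involving $\beta$ is available. Instead, after computing the Laplacian commutator (for which you need $a=x\chi(x)\,(\xi h^{\tau-1})\psi(\xi h^{\tau-1})$ with the extra $\xi h^{\tau-1}$ factor, or else $\{\xi^2,a\}$ is odd in $\xi$ and has no sign) to get $2ih\Op((\chi+x\chi')\xi^2\psi)$, one \emph{substitutes} $\xi^2=p-ih^{2-\gamma}W+h^2\beta$. The $h^2\beta\chi\psi$ piece yields $h^3\beta\|Ju\|^2$ directly via \texttt{boundbelowJeq}; the $\chi\psi p$ piece is precisely what \texttt{chipsiPintofeq} handles (a principal step, not a lower-order remainder); and the $\chi\psi W$ piece requires a second $v_j$-factorization argument (\texttt{singlecommuteeq} belongs here). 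Your group (iii) thus contains the heart of the argument, not its tail.
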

Lemma \ref{mainestimate} is proved in section \ref{mainestimateproof}. $h^2 \beta \leq h^{2-2\tau}$ is assumed in order to apply the elliptic region estimate in the proof.

\subsection{Combination of Estimates}
This subsection completes the proof of \eqref{highenergy}, the high horizontal energy estimate, using the following proposition on different regimes for $\beta, \tau$ and $\gamma$. 
\begin{proposition}\label{roll}
Suppose $W \in \Wkinf$ and fix $\tau \in [\tm, 1], \gamma \in \{1,2\}$ and $\e_2>0$. There exist $C, q_0 > 0,$ such that if  $q \geq q_0$ and $\beta$ satisfies $\e_2 \leq \beta \leq q^{2\tau/\gamma}$ then for $u$ and $f$ solving \eqref{1dDWE} 
$$
\lp{u}{2}^2 \leq C \frac{q^{\frac{2\tau}{\gamma}}}{\beta^2} \lp{f}{2}^2.
$$
\end{proposition}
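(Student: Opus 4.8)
The plan is to undo the reduction to the rescaled equation \eqref{h1dDWE} and then glue together the three regional bounds of Section \ref{highenergyoutline} along a partition of phase space. First I would set $h = q^{-1/\gamma}$, so that if $u,f$ solve \eqref{1dDWE} then they solve \eqref{h1dDWE}; under this rescaling the hypothesis $\e_2 \le \beta \le q^{2\tau/\gamma}$ becomes $\e_2 \le \beta \le h^{-2\tau}$, i.e. $h^2\beta \le h^{2-2\tau}$, which is exactly the size restriction needed to invoke Lemmas \ref{ellipticlemma}, \ref{ellipticlemmaweak} and \ref{mainestimate}, and the target estimate becomes $\lp u 2^2 \le C h^{-2\tau}\beta^{-2}\lp f 2^2$. (The endpoints $\beta = \e_2$ and $\beta = h^{-2\tau}$, where some of these lemmas ask for strict inequalities, are dealt with by shrinking $\e_2$ and enlarging $C$; this is routine.) Then I would split the constant symbol as $1 = (1-\chi) + \chi\big(1-\psi(h^{\tau-1}\xi)\big) + \chi\psi(h^{\tau-1}\xi)$, so that $u = (1-\chi)u + \Op\big(\chi(1-\psi(h^{\tau-1}\xi))\big)u + \Op\big(\chi\psi(h^{\tau-1}\xi)\big)u$, and by the triangle inequality it is enough to estimate each of these three pieces in $L^2$.

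For the damping piece, $\supp(1-\chi) \subset \{\sigma+\sigma_1/2 \le |x|\} \subset \{\sigma < |x|\}$, where $W \ge c > 0$ by assumption, so $\lp{(1-\chi)u}{2}^2 \le c^{-1}\lp{W^{1/2}u}{2}^2 \le c^{-1}h^\gamma\lpfu$ by Lemma \ref{hdamplemma}. For the elliptic piece, the symbol $\chi(1-\psi(h^{\tau-1}\xi))$ is supported where $|\xi| > 2h^{1-\tau}$, and there $z + (1-z)\ti z \equiv 1$ with $z,\ti z$ the cutoffs of Lemmas \ref{ellipticlemma} and \ref{ellipticlemmaweak}; writing $\chi(1-\psi(h^{\tau-1}\xi)) = [\chi(1-\psi(h^{\tau-1}\xi))]z + [\chi(1-\psi(h^{\tau-1}\xi))(1-z)]\ti z$ and composing — the composition error being $o(1)$ in operator norm since $z,\ti z$ are smooth and the stray $\xi$-derivatives fall on $\xi$-compactly-supported factors — I get $\lp{\Op(\chi(1-\psi(h^{\tau-1}\xi)))u}{2}^2 \le C\lp{Zu}{2}^2 + C\lp{\ti Z u}{2}^2 + o(1)\lp u 2^2$, which by Lemmas \ref{ellipticlemma} and \ref{ellipticlemmaweak} is $\le C(h^{5\tau-1} + h^4)\lp f 2^2 + Ch^{4-\gamma}\lpfu + o(1)\lp u 2^2$.

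For the propagating piece, since $0 \le \chi\psi \le 1$ a sharp Gårding inequality gives $\<\Op((\chi\psi(h^{\tau-1}\xi))^2)u,u\> \le \<\Op(\chi\psi(h^{\tau-1}\xi))u,u\> + o(1)\lp u 2^2$, and because $\chi$ and $\psi$ were chosen with smooth square roots, $\Op(\chi\psi(h^{\tau-1}\xi)) = J^*J + o(1)$; hence $\lp{\Op(\chi\psi(h^{\tau-1}\xi))u}{2}^2 \le \lp{Ju}{2}^2 + o(1)\lp u 2^2$, and Lemma \ref{mainestimate} bounds this by $Ch^{-\tau}\beta^{-1}\lpfu + Ch^{6\tau-\gamma-1}\beta^{-1}\lp f 2^2 + o(1)\lp u 2^2$. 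Adding the three pieces, absorbing the $o(1)\lp u 2^2$ terms into the left-hand side for $q$ large, and applying Young's inequality to every cross term $\lpfu$ (splitting off a small multiple of $\lp u 2^2$ to be re-absorbed), I arrive at
\[
\lp u 2^2 \le C\left( \frac{h^{-2\tau}}{\beta^2} + \frac{h^{6\tau-\gamma-1}}{\beta} + h^{5\tau-1} + h^4 + h^{2\gamma} + h^{8-2\gamma} \right)\lp f 2^2 .
\]

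The remaining step is bookkeeping: using $\e_2 \le \beta \le h^{-2\tau}$ together with $\tau \in [\tm, 1]$, $\gamma \in \{1,2\}$, and the elementary fact that $\tm > \frac{k_0+2}{2k_0-4} > 1/2$ for every admissible $k_0$ (so $6\tau - \gamma - 1 \ge 0$, $3\tau - 1 > 0$, $\gamma \ge \tau$, $\gamma + \tau \le 4$), each auxiliary error coefficient is $\le C h^{-2\tau}\beta^{-2}$, which yields $\lp u 2^2 \le C h^{-2\tau}\beta^{-2}\lp f 2^2 = C q^{2\tau/\gamma}\beta^{-2}\lp f 2^2$, as claimed. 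Apart from quoting the three regional lemmas, the real work is (a) controlling $\Op(\chi\psi(h^{\tau-1}\xi))$ from below by $J^*J$ with a genuinely $o(1)$ error — which is exactly where the smooth square roots and the sharp Gårding inequality in the finite-regularity second-microlocal calculus of Appendix \ref{pseudosection} enter — and (b) checking that the exponent arithmetic closes, which is where the restriction $\beta \le q^{2\tau/\gamma}$ and the lower bound $\tau \ge \tm > 1/2$ are essential; I expect (b), rather than any single estimate, to be the step that most tightly constrains the statement.
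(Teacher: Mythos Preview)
Your argument is correct and arrives at the same endpoint as the paper, but the packaging is different. The paper does not introduce an operator partition of unity; instead it simply adds the four estimates \eqref{hdampest}, \eqref{ellipticestimate1}, \eqref{ellipticestimate2}, \eqref{bigbadestimate}, rewrites the left side as $\langle (W + Z^2 + \tilde Z^2 + J^2)u,u\rangle$, and invokes the strict positivity of the total symbol $W(x)+z(\xi)^2+\tilde z(\xi)^2+\chi(x)\psi(h^{\tau-1}\xi)$ to bound this below by $c\lp u 2^2$. Your splitting $1=(1-\chi)+\chi(1-\psi)+\chi\psi$ and your piecewise control via $W^{1/2}u$, $Zu,\tilde Zu$, and $Ju$ is the same mechanism made explicit; the intermediate composition errors you track are exactly the ones the paper hides in the single phrase ``strictly positive on $\Ps$.'' The exponent bookkeeping at the end is identical to the paper's, and your observation that $\tau>\tm>1/2$ is what makes $6\tau-\gamma-1\ge 0$ close matches the paper's remark.

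One small point: you invoke a sharp G\aa rding inequality from Appendix~\ref{pseudosection}, but no such statement appears there, and you do not actually need it. Since $\chi,\psi$ have smooth square roots, $\Op(\chi\psi)=J^2+\Olt(h^{2\tau})$ directly (the first-order term in $J\circ J$ is the Poisson bracket of $\chi^{1/2}\psi^{1/2}$ with itself, which vanishes), so $\lp{\Op(\chi\psi)u}{2}\le \nm{J}_{L^2\to L^2}\lp{Ju}{2}+o(1)\lp u 2$ without any positivity argument. This is in fact the same factorization the paper uses (implicitly) when it passes from $\langle J^2 u,u\rangle$ to $\lp{Ju}{2}^2$.
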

The form of this estimate heps show why $\tau$ and $\gamma$ are taken as two separate parameters. Taking $\gamma=2$ produces a better estimate for all values of $\tau$, however the estimate then only applies to $\beta<q$ which does not cover the required range of $\beta<q^2$. 

This proposition will be proved using the estimates on the damped region (Lemma \ref{hdamplemma}), the elliptic region (Lemmas \ref{ellipticlemma} and \ref{ellipticlemmaweak}) and the propagating region (Lemma \ref{mainestimate}). 
\begin{proof}[Proof of Proposition \ref{roll}]
Note that  $\beta \leq q^{2\tau/\gamma}$ guarantees $h^2 \beta \leq h^{2-2\tau}$ and taking $q_0$ large enough ensures that $h$ is small enough to apply the Lemmas.
Add together \eqref{hdampest}, \eqref{ellipticestimate1}, \eqref{ellipticestimate2} and \eqref{bigbadestimate}, 
\begin{align*}
\ltwo{W^{1/2}u}^2 + \ltwo{Zu}^2 +\ltwo{\ti{Z}u}^2+ \ltwo{Ju}^2 &\leq (h^{\gamma} + Ch^{4-\gamma}+C \frac{h^{-\tau}}{\beta}) \lpfu \\
&+C ( h^{5\tau-1}+ h^{4}+ \frac{h^{6\tau-\gamma-1}}{\beta}) \ltwo{f}^2  \\
&+ (o(h^2) + o(1))\ltwo{u}^2.
\end{align*}
Since $\gamma \leq 2$ and $\tau \geq 1/2$, the $(h^{\gamma}+h^{4-\gamma}) \lpfu$ and $(h^{4}+h^{5\tau-1}) \ltwo{f}$ terms on the right hand side are automatically smaller than other terms and can be safely ignored. 

Now, rewrite the LHS as $\<(W + Z^2 +\ti{Z}^2+ J^2)u, u\>$ and use the fact that $W(x)+z(\xi)^2+\ti{z}(\xi)^2+\chi(\xi)\psi(h^{\tau-1}\xi)$ is strictly positive on $\Ps$ to bound that term from below by $c \ltwo{u}^2$. Then for $h$ small enough,  absorb all the $\ltwo{u}^2$ terms from the right hand side into the left 
$$
c \ltwo{u}^2 \leq  C \frac{h^{-\tau}}{\beta} \lpfu + C \frac{h^{6\tau-\gamma-1}}{\beta}\ltwo{f}^2.
$$
Now use Young's inequality on the $\frac{h^{-\tau}}{\beta}\lpfu$ term and group the resultant $\lp{u}{2}^2$ onto the left hand side to obtain 
$$
\lp{u}{2} \leq C \left( \frac{h^{-2\tau}}{\beta^2} + \frac{h^{6\tau-\gamma-1}}{\beta} \right)\lp{f}{2} \leq C \frac{h^{-2\tau}}{\beta^2} \ltwo{f}.
$$
Where the second inequality follows because $\tau>\tm >1/2$ and $\gamma \leq 2$ imply $6\tau-\gamma -1 > 3 - \gamma -1 >0$, so the second term goes to 0 as $h \ra 0$ regardless of $\beta$.

Finally the rescaling $q=1/h^{\gamma}$ gives the desired inequality. 
\end{proof}

To finish the proof of \eqref{highenergy} it is necessary to consider different regimes for $\beta, \tau$ and $\gamma$ in order to ensure that $\beta \leq q^{2\tau/\gamma}$ and to obtain the best possible estimate. Suppose $q \geq q_0$ and consider three cases for $\e_2 \leq \beta \leq q^2$ (recalling that $1/2 \leq \tm \leq 1$)
\begin{enumerate}
	\item $\e_2 \leq \beta \leq q^{\tm}$
	\item $\frac{1}{2}q^{\tm} \leq \beta \leq q^{3 \tm}$
	\item $\frac{q^{3 \tm}}{2} \leq \beta \leq q^2.$
\end{enumerate}
In case 1 choose $\tau=\tm, \gamma=2.$ Then by Proposition \ref{roll} there exists $C>0$ such that
$$
\lp{u}{2}^2 \leq C q^{\tm} \lp{f}{2}^2.
$$
In case 2 choose $\tau=3\tm, \gamma=2.$ Then by Proposition \ref{roll}, since $\frac{1}{\beta} \leq \frac{2}{q^{\tm}}$, there exists $C>0$ such that 
$$
\lp{u}{2}^2 \leq C  \frac{q^{3\tm}}{q^{2\tm}} \lp{f}{2}^2 \leq C q^{\tm} \lp{f}{2}^2.
$$
If $3\tm  \geq 2$, skip case 3 and for case 2 instead take $\tau=1, \gamma=1$. Then by Proposition \ref{roll}, since $\frac{1}{\beta} \leq \frac{2}{q^{\tm}}$, there exists $C>0$ such that 
$$
\lp{u}{2}^2 \leq C  \frac{q^2}{q^{2\tm}} \lp{f}{2} \leq C q^{2- 2\tm} \lp{f}{2}^2 \leq C q^{\tm} \lp{f}{2}^2.
$$
where the final inequality follows since $3\tm \geq 2$ implies $2-2\tm \leq \tm$.

In case 3 choose $\tau=1, \gamma=1$. Then by  Proposition \ref{roll}, since $\frac{1}{\beta} \leq \frac{2}{q^{3\tm}}$, there exists $C>0$ such that 
$$
\lp{u}{2}^2 \leq C \frac{q^2}{q^{6\tm}} \lp{f}{2}^2 \leq C q^{2-6\tm} \lp{f}{2}^2 \leq C q^{\tm} \lp{f}{2}^2,
$$
where the final inequality holds because $\tm>1/2>2/7$ implies $2-6\tm \leq \tm$. 

Since all $q, \beta$ such that $\e_2 \leq \beta \leq q^2$ are covered by these three cases this proves the high energy estimate \eqref{highenergy}. This along with the low energy estimate \eqref{lowenergy} completes the proof of Theorem \ref{maintheorem}. 

So it now remains to prove the elliptic estimates (Lemmas \ref{ellipticlemma} and \ref{ellipticlemmaweak}) and the propagating estimate (Lemma \ref{mainestimate}). They are proved in sections \ref{ellipticsection} and \ref{mainestimateproof}, respectively. 

\section{Proof of elliptic region estimates Lemmas \ref{ellipticlemma} and \ref{ellipticlemmaweak}}\label{ellipticsection}
If $W$ is smooth and $\tau-1$ then a conventional semiclassical parametrix argument produces the desired elliptic estimate (see for example \cite{DyatlovZworski2020} Proposition E.32). Normally as part of that proof $\xi^2$ is composed with $1/p$. This becomes an issue when $W$ is not smooth as the low regularity composition expansion (Lemma \ref{lowregcompose}) only works with bounded symbols. To address this cutoff functions are used to split the estimate into estimates on a bounded elliptic set (Lemma \ref{ellipticlemma}) and a standard elliptic set (Lemma \ref{ellipticlemmaweak}). 

Taking $\tau \neq 1$ produces additional issues. The bounded elliptic set is $h$ dependent and $\xi$ is only bounded from below by a power of $h$, rather than a constant. In order to ensure that $p=\xi^2 +ih^{2-\gamma} W -h^2 \beta$ is invertible on this set $\beta$ must satisfy $h^2 \beta \leq h^{2-2\tau} < c\xi^2$. Therefore $p$ is only bounded from below by a power of $h$ and every division by $p$ creates unfavorable powers of $h$. These unfavorable powers can be controlled, but this requires additional regularity of $W$ and the requirements grow as $\tau$ approaches $1/2$.

In this section I will first prove the estimate on the $h$ dependent elliptic set (Lemma \ref{ellipticlemma}) and then prove the estimate on the standard elliptic set (Lemma \ref{ellipticlemmaweak}).

\subsection{$h$ dependent elliptic estimate, Lemma \ref{ellipticlemma}}
The proof of Lemma \ref{ellipticlemma} follows the conventional semiclassical parametrix argument with adjustments made to handle the issues described above. 

The first change is that the parametrix is constructed for a cutoff version of $P$, $\Op(\chi p)$, which is bounded.  

Let $\chi \in \Cs(\Rb)$ have
$$
\chi(\xi) = \begin{cases}
1 & |\xi| < 3.5 \\
0 & |\xi|>4.
\end{cases}
$$

Define
$$
q_0(x,\xi) = \frac{h^{2-2\tau} z(\xi)}{\chi(\xi)(\xi^2 + i h^{2-\gamma} W - h^2 \beta)} = \frac{h^{2-2\tau} z(\xi)}{\chi(\xi) p(x,\xi)} = \frac{h^{2-2\tau} z(\xi)}{p(x,\xi)},
$$
where $\chi$ can be replaced by 1 because $\chi \equiv 1$ on $supp z \subset \{ 1.5 h^{1-\tau} < |\xi| < 3\}$. The $\chi$ is included to simplify the composition with $\Op(\chi p)$. 

For $j \geq 1,$ recursively define 
$$
q_j = -\frac{1}{\chi p} \sum_{l=0}^{j-1} q_{l,j-l},
$$
where $q_{l,j-l}$ is the $(j-l)$th term in the composition expansion $\Op(q_l) \Op(\chi p)$ and is given by 
$$
q_{l,j-l} = C_{j,l}h^{j-l} (\p_x^{j-l} (\chi p) \p_{\xi}^{j-l} q_l + (-1)^{j-l} \p_{\xi}^{j-l} (\chi p) \p_x^{j-l} q_l),
$$
for $j-l \geq 1$. Once again the $\chi$ can be replaced by 1 in the definitions of $q_j$ and $q_{l,j-l}$, since $\chi$ is identically 1 on the support of $z$ and thus $q_j$. 

These $q_j$ are used to construct a parametrix for $\Op(\chi p)$, which in turn is used to control $Q_j \Op(\chi) P$. In particular, I will show that for $N$ large enough
$$
\sum_{j=0}^N Q_j \Op(\chi p) =\sum_{j=0}^N Q_j \Op(\chi) P= h^{2-2\tau} Z + o(h^{3-2\tau}).
$$
To prove this, the $q_j$ are first shown to be in a particular symbol class (Lemmas \ref{q0symbol} and \ref{qjsymbol}), which gives control of the size of $Q_j$ as operators on $L^2$ (Lemma \ref{qjl2}). Then $\sum Q_j$ and $\Op(\chi p)$ are composed in two different ways, producing error terms of the appropriate size (Lemmas \ref{qjcomposechip} and \ref{qjcomposechip2}). Finally, this composition formula is applied to $u$ which gives the desired elliptic estimate. 

The following symbol style estimates for $p$ and $\frac{W}{p}$ on $\supp z = \{1.5 h^{1-\tau} < |\xi| < 3\}$ are needed to prove the symbol estimates for $q_j$. 
\begin{lemma}\label{psymbolest}\, \\
\begin{enumerate}
\item For $m,j, v \in \Nb$ such that $2m\geq 2j$ 
$$
\sup_{(x,\xi) \in \supp z} \left| \frac{\p_{\xi}^v (p(x,\xi)^j)}{p(x,\xi)^m} \right| \leq C h^{(\tau-1)(2m-2j+v)}.
$$
\item For $\alpha, j ,t \in \Nb$ with $j \geq t$ 
$$
\sup_{(x,\xi) \in \supp z} \left| h^{(1-\tau) 2t} \frac{\p_x^{\alpha} (i h^{2-\gamma} W)^{j-t}}{p^j} \right| \leq C h^{(\tau-1) \alpha}.
$$
\end{enumerate}
\end{lemma}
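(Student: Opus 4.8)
The plan is to first record the elementary pointwise bounds for $p$ on $\supp z$ and then reduce both estimates to counting powers of $h$. On $\supp z$ one has $|\xi|\gtrsim h^{1-\tau}$ and $|\xi|\le 3$, and I would use three facts: (i) $|p|\ge c\,\xi^2$; (ii) $|p|\ge c\,h^{2-2\tau}$; (iii) $|p|\ge h^{2-\gamma}W$. For (i)--(ii), write $\re p=\xi^2-h^2\beta$; since $h^2\beta<h^{2-2\tau}$ while $\xi^2\gtrsim h^{2-2\tau}$ on $\supp z$, we get $h^2\beta\le c_0\xi^2$ with a fixed $c_0<1$, hence $|p|\ge\re p\ge(1-c_0)\xi^2\gtrsim h^{2-2\tau}$; (iii) holds because $h^{2-\gamma}W$ is the imaginary part of $p$ and $W\ge 0$. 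I will use constantly the two consequences $|\xi|\le C|p|^{1/2}$ and $W\le h^{\gamma-2}|p|$ on $\supp z$.

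For part 1, I would expand $\p_\xi^v(p^j)$ by Leibniz. Because $\p_\xi p=2\xi$, $\p_\xi^2 p=2$ and $\p_\xi^k p=0$ for $k\ge 3$, every term has the shape $c\,p^{j-l}\prod_{s=1}^l\p_\xi^{a_s}p$ with $a_s\in\{1,2\}$ and $\sum_s a_s=v$; writing $l_1=\#\{s:a_s=1\}$ and $l_2=\#\{s:a_s=2\}$ we have $l_1+2l_2=v$ and $l=l_1+l_2$. Estimating $|\p_\xi^{a_s}p|\le C|p|^{1/2}$ when $a_s=1$ (via $|\xi|\le C|p|^{1/2}$) and $|\p_\xi^{a_s}p|\le C$ when $a_s=2$, each term over $p^m$ is bounded by $C|p|^{(j-l-m)+l_1/2}$, and a one-line simplification using $l_1+2l_2=v$ collapses the exponent to $j-m-v/2$, independently of the partition. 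Since $m\ge j$ this is $\le -v/2\le 0$, so fact (ii) gives the term $\le C|p|^{-(m-j+v/2)}\le Ch^{-(2-2\tau)(m-j+v/2)}=Ch^{(\tau-1)(2m-2j+v)}$; summing the finitely many terms finishes part 1.

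For part 2, I would write $(ih^{2-\gamma}W)^{j-t}=i^{j-t}h^{(2-\gamma)(j-t)}W^{j-t}$ and expand $\p_x^\alpha(W^{j-t})$ by Leibniz; a typical term is $W^{(j-t)-r}\prod_{s=1}^r\p_x^{b_s}W$ with $b_s\ge 1$, $\sum_s b_s=\alpha$, $r\le\min(j-t,\alpha)$ (the case $j=t$ being trivial). The key input is that for the factors with $b_s=1$ I use the Glaeser-type inequality $|\p_x W|\le C\,W^{1/2}$, valid for any nonnegative $W\in W^{2,\infty}$ (for $W=\sum_j v_j^2$ it is immediate from Cauchy--Schwarz, $|\p_x W|=|2\sum_j v_j v_j'|\le C\,W^{1/2}$); for the remaining $r_2$ factors with $b_s\ge 2$ I just use boundedness. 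With $r_1=\#\{b_s=1\}$ and $r_2=\#\{b_s\ge 2\}$ this bounds the term by $C\,W^{(j-t)-r_1/2-r_2}$, whose exponent is $\ge(j-t)-r\ge 0$, so fact (iii) gives $W^{(j-t)-r_1/2-r_2}\le(h^{\gamma-2}|p|)^{(j-t)-r_1/2-r_2}$. Assembling the $h$-powers — the prefactor $h^{(1-\tau)2t}$, the factor $h^{(2-\gamma)(j-t)}$, the power of $h^{\gamma-2}$ just produced, and $|p|^{-(t+r_1/2+r_2)}\le Ch^{-(2-2\tau)(t+r_1/2+r_2)}$ from fact (ii) — all exponents telescope (in particular $h^{(1-\tau)2t}$ exactly cancels the $t$ surplus powers of $1/p$), leaving $h^{(2\tau-\gamma)(r_1/2+r_2)}$. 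Finally $r_1+2r_2\le\sum_s b_s=\alpha$, so $r_1/2+r_2\le\alpha/2$; since $\tau\in[\tm,1]\subset[\tfrac12,1]$ and $\gamma\in\{1,2\}$ one checks $(2\tau-\gamma)(r_1/2+r_2)\ge(\tau-1)\alpha$ in both cases: for $\gamma=1$ the left side is $\ge 0\ge$ the right side, and for $\gamma=2$ it equals $2(\tau-1)(r_1/2+r_2)\ge(\tau-1)\alpha$ because $r_1/2+r_2\le\alpha/2$ and $\tau-1\le 0$. Hence each term is $\le Ch^{(\tau-1)\alpha}$, proving part 2.

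The one genuinely delicate point is the bookkeeping at the end of part 2: one must combine the Glaeser inequality for the first-order $x$-derivatives of $W$ with the precise cancellation supplied by the $h^{(1-\tau)2t}$ prefactor, since replacing $|\p_x W|\le C\,W^{1/2}$ by the crude $|\p_x W|\le C$ loses an extra factor $h^{1-\tau}$ per derivative and is not sufficient when $\gamma=2$. Everything else is routine once facts (i)--(iii) are in place.
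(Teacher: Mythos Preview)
Your proof is correct and, in both parts, a bit more streamlined than the paper's.

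For part 1, the paper expands $p^j$ binomially into $(\xi^2-h^2\beta)^l(ih^{2-\gamma}W)^{j-l}$ and then splits $\supp z$ into the regions $\{h^{2-\gamma}W\le\xi^2\}$ and $\{h^{2-\gamma}W\ge\xi^2\}$, handling each separately. You instead observe once and for all that $|p|\ge c\xi^2$ holds uniformly on $\supp z$ (since $h^2\beta\le c_0\xi^2$ there), which gives $|\xi|\le C|p|^{1/2}$ and lets you bound every Fa\`a di Bruno term by $|p|^{j-m-v/2}$ directly; no region splitting is needed.

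For part 2, the paper first disposes of the case $2(j-t)\le\alpha$ by a crude bound, and in the complementary case uses the multinomial parametrization $W^{j_0}(\p_xW)^{j_1}\cdots(\p_x^\alpha W)^{j_\alpha}$ together with the combinatorial inequality $2j_0+j_1\ge 2(j-t)-\alpha$. Your parametrization via $r_1=\#\{b_s=1\}$ and $r_2=\#\{b_s\ge2\}$ treats all cases at once: the $h$-exponent telescopes cleanly to $(2\tau-\gamma)(r_1/2+r_2)$, and then a short case check on $\gamma\in\{1,2\}$ (using $r_1+2r_2\le\alpha$) finishes. Both arguments rest on the same essential input, namely $|\p_xW|\le CW^{1/2}$, but yours avoids the preliminary case split on $\alpha$ versus $2(j-t)$.
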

\begin{proof}
1) To begin the binomial expansion formula gives 
$$
\p_{\xi}^v p(x,\xi)^j = \sum_{l=0}^j C_{l,j}  \p_{\xi}^v (\xi^2 - h^2 \beta)^l (i h^{2-\gamma} W(x))^{j-l}.
$$
Again using the binomial expansion formula  
$$
\p_{\xi}^v (\xi^2 - h^2 \beta)^l = \p_{\xi}^v \sum_{k=0}^l C_{k,l} \xi^{2k} (-h^2 \beta)^{l-k} = \sum_{k=0}^l C_{k,v,l} \xi^{2k-v} (-h^2 \beta)^{l-k}.
$$
Now note that on $\supp z,$ $|\xi|>1.5 h^{1-\tau}$ while $h^2 \beta \leq h^{2 - 2\tau}$ and so $h^2 \beta \leq \xi^2/2$. Therefore
$$
|\p_{\xi}^v (\xi^2 - h^2 \beta)^l| \leq \sum_{k=0}^l C |\xi|^{2k-v} |h^2 \beta|^{l-k} \leq \sum_{k=0}^l C |\xi|^{2l-v} \leq C |\xi|^{2l-v}.
$$

Now split $\supp z$ into two sets  
\begin{enumerate}
	\item $\Ac = \{ (x,\xi) \in \supp z; h^{2-\gamma} W(x) \leq \xi^2 \}$ 
	\item $\Bc = \{(x,\xi) \in \supp z; h^{2-\gamma} W(x) \geq \xi^2\}$
\end{enumerate}
For $(x, \xi) \in \Ac$ 
$$
|\p_{\xi}^v p^j |\leq \sum_{l=0}^j |\p_{\xi}^v (\xi^2 -h^2 \beta)^l| |h^{2-\gamma} W|^{j-l} \leq \sum_{l=0}^j |\xi|^{2l-v} |\xi^2|^{j-l} \leq C |\xi|^{2j-v}.
$$
Also $|p| = \sqrt{(\xi^2-h^2 \beta)^2 + (h^{2-\gamma} W)^2} \geq \xi^2 -h^2 \beta \geq c \xi^2$. 

Therefore
$$
\sup_{(x,\xi) \in \Ac} \left| \frac{\p_{\xi}^v p^j}{p^m} \right| \leq \sup_{(x,\xi) \in \Ac}  \frac{C |\xi|^{2j-v}}{|p|^m} \leq \sup_{(x,\xi) \in \Ac}  C |\xi|^{2j-v-2m} \leq C h^{(1-\tau)(2m-2j+v)},
$$
where the last inequality follows since $\xi > 1.5 h^{1-\tau}$ on $\supp z$. This is the desired inequality. 

Now consider the second case when $(x,\xi) \in \Bc$ i.e. $h^{2-\gamma} W \geq \xi^2$. Then $\p_{\xi}^v p \leq \sum_{l=0}^j |\xi|^{2l-v} |h^{2-\gamma} W|^{j-l} \leq |h^{2-\gamma} W|^j$ and $|p| \geq h^{2-\gamma} W$. Therefore
$$
\sup_{(x,\xi) \in \Bc} \left| \frac{\p_{\xi}^v p^j}{p^m} \right| \leq \sup_{(x,\xi) \in \Bc} \frac{W^j }{(h^{2-\gamma} W)^m} \leq  \sup_{(x,\xi) \in \Bc} \frac{1}{\xi^{2(m-j)}} \leq h^{(\tau-1)(2m-2j)} \leq h^{(\tau-1)(2m-2j+v)}
$$
since $\tau-1 \leq 0$ and $\xi>1.5 h^{1-\tau}$ on $\supp z$. These two cases cover all of $\supp z$ and so the desired inequality holds. 

2) When $2j-2t \leq \alpha$ this is true as $|p|^j \geq h^{-2j(\tau-1)}$ and so 
$$
\frac{h^{(\tau-1)(-2t)}}{|p|^j} |\p_x^{\alpha} (ih^{2-\gamma} W)^{j-t}| \leq C h^{(\tau-1)(-2t +2j)} \leq C h^{(\tau-1)\alpha}.
$$

So now assume $2j -2t> \alpha$. Applying $\p_x^{\alpha}$ to $(ih^{2-\gamma} W)^{j-t}$ produces a sum of powers of derivatives of $W$. In particular letting $j_0, j_1, \ldots j_{\alpha} \in \Nb$ then
$$
\p_x^{\alpha} W^{j-t} = \sum C_{j_0, \ldots, j_{\alpha}} W^{j_0} (\p_x W)^{j_1} (\p_x^2 W)^{j_2} \cdots (\p_x^{\alpha} W)^{j_{\alpha}},
$$
where the sum is taken over $j_0, \ldots, j_{\alpha}$ such that $j_0 + j_1 + \cdots + j_{\alpha} = j-t$ and $j_1 + 2 j_2 + \cdots + \alpha j_{\alpha} = \alpha$. These conditions guarantee that there are $j-t$ factors of $W$ on the right hand side and that each term in the sum has $\alpha$ total derivatives. 

Rearranging the derivative equation and then substituting in a rearranged version of the $W$ powers equation gives 
\begin{align*}
\alpha - j_1 = 2 j_2 + 3 j_3 + \cdots + \alpha j_{\alpha} \geq 2(j_2 + j_3 + \cdots + j_{\alpha}) = 2(j-t - j_0 -j_1). 
\end{align*}
Therefore $0<2(j-t) -\alpha \leq 2j_0 + j_1$. That is the number of terms with no derivatives or one derivative is somehow bounded from below. 

Now note that since $p=\xi^2 + ih^{2-\gamma} W - h^2 \beta$ then $|p| \geq h^{2-\gamma} W$ and so $\left|\frac{h^{2-\gamma} W}{p} \right| \leq C$. Similarly since $W=\sum v_j^2$ and the $v_j$ are bounded and have bounded derivatives then $|\p_x W| \leq C W^{1/2}$ and so $\left| \frac{h^{2-\gamma} \p_x W}{p^{1/2}} \right| \leq \frac{h^{2-\gamma} W^{1/2}}{|p|^{1/2}} \leq C$.

Therefore, again taking the sum over $j_0, j_1, \ldots, j_{\alpha}$ satisfying the derivative and powers of $W$ constraints, and using powers of $W$ and $\p_x W$ to cancel powers and half powers of $p$ respectively
$$
\left| \frac{\p_x^{\alpha} (ih^{2-\gamma} W)^{j-t}}{p^j} \right| \leq \sum C_{j_0, \ldots, j_{\alpha}}  \frac{h^{(2-\gamma)(j-t)} W^{j_0} |\p_x W|^{j_1} |\p_x^2 W|^{j_2} \cdots |\p_x W|^{j_{\alpha}}}{|p|^j} \leq \sum \frac{C}{|p|^{j-j_0-\frac{j_1}{2}}}.
$$
Now using that $|p| \geq c \xi^2$ and $|\xi| > c h^{1-\tau}$ on $\supp z$ the above equation gives 
\begin{align*}
\sup_{(x,\xi) \in \supp z} \left| h^{(1-\tau)2t} \frac{ \p_x^{\alpha} (ih^{2-\gamma} W)^{j-t}}{p^j} \right| &\leq \sup_{(x,\xi) \in \supp z}  \frac{Ch^{(1-\tau)2t}  }{|p|^{j-j_0-\frac{j_1}{2}}} \leq \sup_{(x,\xi) \in \supp z}  \frac{C h^{(1-\tau) 2t}}{|\xi|^{2j-2j_0 -j_1}}\\
& \leq h^{(1-\tau) 2t} h^{(\tau-1)(2j - 2j_0 -j_1)} = C h^{(\tau-1)(2j-2t-2j_0-j_1)} \leq C h^{(\tau-1) \alpha},
\end{align*}
where the final inequality follows from $2j_0 +j_1 \geq 2(j-t) - \alpha$. 
\end{proof}
The proof of part 2 of this lemma is a key usage of the fact that $|\nabla W| \leq W^{1/2}$ in this paper. Because of this it is worth mentioning that this exact argument does not give a meaningful improvement when $|\nabla W| \leq C W^{1-\e}$ for $\e<1/2$. With such an assumption there is still no improvement from factors of $W$ without any derivatives, the improvement can only come from factors of $\p_x W$. However when $\alpha$ is even there are always terms with $j_1=0$ with no improvement over the stated result.

With these symbol style estimates it is now possible to give the symbol class for $q_0$. Unlike other symbols in this paper, differentiating it in either $\xi$ or $x$ produces factors of $h^{\tau-1}$.
\begin{lemma}\label{q0symbol}
$$
q_0 \in W^{k_0} S_{1-\tau, 1-\tau}(\Ps).
$$
\end{lemma}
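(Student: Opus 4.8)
The plan is to check directly that $q_0$ obeys the seminorm estimates defining the class $W^{k_0}S_{1-\tau,1-\tau}(\Ps)$ (see Appendix~\ref{pseudosection}): that there are constants $C_{\alpha,v}$ with $\sup_{\Ps}|\p_x^\alpha\p_\xi^v q_0|\le C_{\alpha,v}\,h^{(\tau-1)(\alpha+v)}$ for every $v\in\Nb$ and every $0\le\alpha\le k_0$ (in particular $q_0=O(1)$, the case $\alpha=v=0$, since $|p|\ge ch^{2-2\tau}$ on $\supp z$). Since $q_0$ is supported in $\supp z\subset\{1.5h^{1-\tau}<|\xi|<3\}$, only that region matters, and there $p=\xi^2+ih^{2-\gamma}W-h^2\beta$ is smooth in $\xi$ with $|p|$ bounded below and $k_0$ bounded $x$-derivatives, so all derivatives involved are legitimate. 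Write $q_0=h^{2-2\tau}\,z(\xi)\,p^{-1}$. Because $z$ depends only on $\xi$ and $h^{2-2\tau}$ is a constant, Leibniz's rule reduces the problem to (i) estimating $\p_\xi^a z$ and (ii) estimating $\p_x^\alpha\p_\xi^b(p^{-1})$ with $a+b=v$. Point (i) is immediate: $z(\xi)=z_1(h^{\tau-1}\xi)z_2(\xi)$ with $z_1,z_2$ fixed and bounded together with all their derivatives, so $|\p_\xi^a z|\le C_a h^{(\tau-1)a}$.

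For (ii), I would expand $p^{-1}$ by Fa\`a di Bruno. Carrying out the $x$-derivatives first, and using that $p$ depends on $x$ only through $W$ so that $\p_x^i p=ih^{2-\gamma}\p_x^iW$ for $i\ge1$, produces a finite sum of terms $(h^{2-\gamma})^s\prod_{i\ge1}(\p_x^iW)^{m_i}\,p^{-(s+1)}$ with $\sum_i m_i=s$ and $\sum_i i\,m_i=\alpha$; then $\p_\xi^b$, together with $\p_\xi p=2\xi$, $\p_\xi^2 p=2$, $\p_\xi^i p=0$ $(i\ge3)$, turns each into a finite sum of terms $(h^{2-\gamma})^s\prod_{i\ge1}(\p_x^iW)^{m_i}\,\xi^{2c-b}\,p^{-(s+1+c)}$ with $\lceil b/2\rceil\le c\le b$. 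Each such term is estimated using exactly the ingredients already assembled for Lemma~\ref{psymbolest}: on $\supp z$ one has $|p|\ge c\xi^2$ (because $h^2\beta\le h^{2-2\tau}<\tfrac12\xi^2$ there), $|p|\ge h^{2-\gamma}W$, $|\xi|>1.5h^{1-\tau}$, $|\p_xW|\le CW^{1/2}$ (as $W=\sum_j v_j^2$ with the $v_j$ bounded with bounded derivatives), and $|\p_x^iW|\le C$ for $2\le i\le k_0$. Distributing the $s+1+c$ factors $p^{-1}$: the $c$ ``$\xi$-copies'' collapse $\xi^{2c-b}$ down to $|\xi|^{-b}\le Ch^{(\tau-1)b}$; $m_1$ of the $h^{2-\gamma}$'s together with $m_1/2$ of the remaining $p^{-1}$'s absorb the $m_1$ first-derivative factors of $W$ (using $|h^{2-\gamma}\p_xW|\lesssim|p|^{1/2}$); the further $p^{-1}$'s are bounded by $|p|^{-1}\le C|\xi|^{-2}\le Ch^{2(\tau-1)}$; and the leftover $(h^{2-\gamma})^{s-m_1}$ contributes a factor $\le1$. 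Bookkeeping the exponents of $h$ and cancelling two of the $h^{2(\tau-1)}$ factors against the prefactor $h^{2-2\tau}$ leaves, up to $h^{(\tau-1)(a+b)}$, the power $h^{(\tau-1)(2s-m_1)}$ times a nonnegative power of $h$; and $2s-m_1\le\alpha$ — since $\alpha=\sum_i i\,m_i\ge m_1+2(s-m_1)$, each Fa\`a di Bruno factor carrying at least one $x$-derivative and each with two or more carrying at least two — gives $h^{(\tau-1)(2s-m_1)}\le Ch^{(\tau-1)\alpha}$. Collecting $h^{(\tau-1)a}$, $h^{(\tau-1)b}$ and $h^{(\tau-1)\alpha}$ yields $|\p_x^\alpha\p_\xi^v q_0|\le Ch^{(\tau-1)(\alpha+v)}$.

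I expect the $x$-derivative estimate in (ii) to be the main obstacle, and it is the same difficulty already met in Lemma~\ref{psymbolest}(2): naively $\p_x$ produces only $ih^{2-\gamma}\p_xW$, and since $|p|$ can be as small as $\sim h^{2-2\tau}$ this looks as large as $h^{2-\gamma}W^{1/2}h^{2(\tau-1)}$, which is not $O(h^{\tau-1})$ on its own. The resolution is to use the two lower bounds on $|p|$ simultaneously — effectively $|p|\gtrsim|\xi|\,(h^{2-\gamma}W)^{1/2}$ — so that each factor $h^{2-\gamma}\p_xW$ is split against a half power of $W$ and a factor $|\xi|^{-1}\lesssim h^{\tau-1}$, and then to invoke the combinatorial inequality $\alpha\ge2s-m_1$ to make the powers balance. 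This balance is tightest when $\gamma=2$ and $\tau$ is near $1/2$, the regime in which the regularity required of $W$ is being pushed, and it is the crucial place where $|\nabla W|\le CW^{1/2}$ is used in this lemma. Since the mechanism is exactly that of Lemma~\ref{psymbolest}(2), in practice most of the work is already done there, and the proof of the lemma is mainly a careful organization of the Fa\`a di Bruno expansion of $q_0=h^{2-2\tau}z/p$ followed by termwise application of those estimates.
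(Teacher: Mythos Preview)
Your proposal is correct and follows essentially the same strategy as the paper: expand the derivatives of $q_0=h^{2-2\tau}z/p$ combinatorially and estimate each term using the three key inequalities $|p|\gtrsim\xi^2$, $|p|\ge h^{2-\gamma}W$, and $|\partial_x W|\le CW^{1/2}$ on $\supp z$, arriving at the same combinatorial constraint (your $2s-m_1\le\alpha$ is exactly the paper's $2j_0+j_1\ge 2(j-t)-\alpha$ in Lemma~\ref{psymbolest}(2)). The only organizational difference is that you apply Fa\`a di Bruno directly to $p^{-1}$, whereas the paper uses the Hoppe quotient formula to write $\partial_x^\alpha(z/p)$ in terms of $\partial_x^\alpha p^j/p^{j+1}$ and then expands $p^j$ binomially before invoking Lemma~\ref{psymbolest}; your route is slightly more direct but the substance is identical.
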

\begin{proof}
Since $\supp z \subset \{1.5 h^{1-\tau} < |\xi| < 3\}$ it is enough to show for $|\xi|<3$ and $\theta \in \Nb, |\alpha| \leq k_0$ that
$$
|\p_x^{\alpha} \p_{\xi}^{\theta} q_0| \leq h^{(\tau-1)(\alpha + \theta)}.
$$

To begin, recall a classical fact about higher order derivatives of a quotient. 
\begin{equation}\label{quotientrule}
\p_x^{\alpha} \left(\frac{f(x)}{g(x)}\right) = \sum_{k=0}^{\alpha} \sum_{j=0}^k (-1)^j \binom{\alpha}{k} \binom{k+1}{j+1} \frac{1}{g^{j+1}} \p_x^{\alpha-k} f \p_x^k g^j.
\end{equation}
This follows from the Leibniz rule and the Hoppe formula applied to $1/g$, (for the Hoppe formula see \cite{JohnsondiBruno} (3.3)) 
 
Therefore 
$$
\p_x^{\alpha} q_0(x,\xi) = \p_x^{\alpha} \left(\frac{ h^{2-2\tau} z(\xi)}{p(x,\xi)} \right)= \sum_{j=0}^{\alpha} (-1)^j \binom{\alpha+1}{j+1} \frac{h^{2-2\tau}z(\xi)}{p^{j+1}} \p_x^{\alpha} p^j.
$$
And so 
$$
\p_{\xi}^{\theta} \p_x^{\alpha} q_0 = \sum_{j=0}^{\alpha}  (-1)^j \binom{\alpha+1}{j+1} h^{2-2\tau} \p_{\xi}^{\theta}\left( \frac{z}{p^{j+1}}  \p_x^{\alpha} p^j\right).
$$
Now applying \eqref{quotientrule} to $\p_{\xi}^{\theta} \left( \frac{z}{p^{j+1}}  \p_x^{\alpha} p^j\right)$
$$
\p_{\xi}^{\theta} \p_x^{\alpha} q_0 = \sum_{j=0}^{\alpha}  \sum_{v=0}^{\theta} \sum_{w=0}^v  C_{j,\alpha, v, \theta, w}  \frac{h^{2-2\tau}}{p^{(j+1)(w+1)}} \p_{\xi}^{\theta-v} (z \p_x^{\alpha} p^j) \p_{\xi}^v p^{(j+1)w}.
$$
So it is sufficient to control each individual term in the sum, which is of the form
\begin{equation}\label{q0sumterm}
C h^{2-2\tau} \frac{1}{p^{(j+1)(w+1)}} \p_{\xi}^{\theta-v} (z \p_x^{\alpha} p^j) \p_{\xi}^v p^{(j+1)w},
\end{equation}
where $0\leq j \leq \alpha$,  $0\leq v \leq \theta$ and $0 \leq w \leq v$. 

Well by Lemma \ref{psymbolest}
$$
\sup_{(x,\xi) \in \supp z} \left| \frac{\p_{\xi}^v (p^{j+1})^w}{(p^{j+1})^w} \right| \leq O(h^{(\tau-1)v}).
$$

This along with $|p| \geq h^{2-2\tau}$ gives
$$
\sup_{(x,\xi) \in \supp z} \left|h^{2-2\tau} \frac{1}{p^{(j+1)(w+1)}} \p_{\xi}^{\theta-v} (z \p_x^{\alpha} p^j) \p_{\xi}^v p^{(j+1)w} \right| \leq \sup_{(x,\xi) \in \supp z} C \left| \frac{1}{p^{j}} \p_{\xi}^{\theta-v}(z \p_x^{\alpha} p^j) h^{(\tau-1) v} \right|
$$
Now use the product rule to expand out $\p_{\xi}^{\theta -v} (z \p_x^{\alpha} p^j)$ 
$$
C \left| \frac{1}{p^{j}} \p_{\xi}^{\theta-v}(z \p_x^{\alpha} p^j) h^{(\tau-1) v} \right| \leq \sum_{l=0}^v \left| \frac{1}{p^j}  \p_{\xi}^{\theta-v-l} z \p_{\xi}^l \p_x^{\alpha} p^j  \right|h^{(\tau-1) v} .
$$
Well $|\p_{\xi}^{\theta-v-l} z| \leq C h^{(\tau-1)(\theta-v-l)}$ which gives 
$$
\sum_{l=0}^v \left| \frac{1}{p^j}  \p_{\xi}^{\theta-v-l} z \p_{\xi}^l \p_x^{\alpha} p^j h^{(\tau-1) v} \right| \leq \sum_{l=0}^v \left| \frac{1}{p^j} h^{(\tau-1)(\theta-l)} \p_{\xi}^l \p_x^{\alpha} p^j \right|.
$$
Again use the binomial expansion to write
$$
p^j = \sum_{t=0}^j C_{j,t}( \xi^2 - h^2 \beta)^t (ih^{2-\gamma} W)^{j-t}, 
$$
and so 
$$
\p_{\xi}^l \p_x^{\alpha} p^j = \sum_{t=0}^{j} C_{j,t} \p_{\xi}^l (\xi^2 - h^2 \beta)^t \p_x^{\alpha} (ih^{2-\gamma} W)^{j-t}.
$$
Combining this with the previous chain of inequalities and \eqref{q0sumterm} gives
\begin{align*}
&\sup_{(x,\xi) \in \supp z} \left|  \frac{h^{2-2\tau}}{p^{(j+1)(w+1)}} \p_{\xi}^{\theta-v} (z \p_x^{\alpha} p^j) \p_{\xi}^v p^{(j+1)w} \right| \leq \\
&\sup_{(x,\xi) \in \supp z} \sum_{l=0}^v \sum_{t=0}^j  \left| \frac{Ch^{(\tau-1)(\theta-l)}}{p^j}  \p_{\xi}^l (\xi^2 - h^2 \beta)^t \p_x^{\alpha} (ih^{2-\gamma} W)^{j-t} \right|.
\end{align*}
By the same argument used in part 1 of Lemma \ref{psymbolest} 
$$
\sup_{(x,\xi) \in \supp z} \left| \p_{\xi}^l (\xi^2 - h^2 \beta)^t \right| \leq C (h^{\tau-1})^{l - 2t}.
$$

Therefore
\begin{align*}
\sup_{(x,\xi) \in \supp z} \sum_{l=0}^v \sum_{t=0}^j  &\left| \frac{Ch^{(\tau-1)(\theta-l)}}{p^j}  \p_{\xi}^l (\xi^2 - h^2 \beta)^t \p_x^{\alpha} (ih^{2-\gamma} W)^{j-t} \right| \\
\leq \sup_{(x,\xi) \in \supp z}  \sum_{t=0}^j  &\left| \frac{C h^{(\tau-1) (\theta-2t)}}{p^j} \p_x^{\alpha} (ih^{2-\gamma} W)^{j-t} \right|.
\end{align*}
Now using part 2 of Lemma \ref{psymbolest}
$$
\sup_{(x,\xi) \in \supp z} \left| \frac{h^{(\tau-1)\theta}}{p^j} h^{(1-\tau) 2 t} \p_x^{\alpha}(ih^{2-\gamma} W)^{j-t} \right| \leq C h^{(\tau-1)(\theta+\alpha)},
$$
and combining this chain of inequalities gives the desired statement.  
\end{proof}

Next I show that $q_j$ is a symbol with the same behavior under differentiation by $x$ and $\xi$ but with size $h^{j(2\tau-1)}$. 
\begin{lemma}\label{qjsymbol}
$$
h^{j(1-2\tau)} q_j \in W^{k_0-j} S_{1-\tau, 1-\tau}(\Ps)
$$
\end{lemma}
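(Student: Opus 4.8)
The plan is to induct on $j$, the base case $j=0$ being exactly Lemma \ref{q0symbol}. Assume then that $h^{l(1-2\tau)}q_l \in W^{k_0-l}S_{1-\tau,1-\tau}(\Ps)$ for all $0\le l\le j-1$. Every $q_l$ inherits the support of $z$, i.e. is supported in $\supp z=\{1.5h^{1-\tau}<|\xi|<3\}$, on which $\chi\equiv 1$; so on that set the recursion defining $q_j$ reads
$$
q_j = -\sum_{l=0}^{j-1} C_{j,l}\,h^{j-l}\Bigl(r_{j-l}\,\p_\xi^{j-l}q_l + (-1)^{j-l}s_{j-l}\,\p_x^{j-l}q_l\Bigr),\qquad r_m:=\frac{\p_x^m p}{p},\quad s_m:=\frac{\p_\xi^m p}{p},
$$
with $p=\xi^2+ih^{2-\gamma}W-h^2\beta$. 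So I first need symbol bounds for the coefficient factors $r_m,s_m$ on $\supp z$, and then I insert the inductive bounds for $q_l$ and read off the estimate term by term.

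For the coefficient factors I would argue as follows. For $m\ge 1$, $\p_x^m p = ih^{2-\gamma}\p_x^m W$, and a quotient-rule expansion exactly as in the proof of Lemma \ref{q0symbol}, combined with Lemma \ref{psymbolest}(2) (used with $j=1$, $t=0$), gives $r_m\in h^{(\tau-1)m}\,W^{k_0-m}S_{1-\tau,1-\tau}(\Ps)$, supported in $\supp z$; this is the place where the hypothesis $|\p_x W|\le CW^{1/2}$ enters. For $s_m$ one has $\p_\xi^m p=0$ for $m\ge 3$, while $s_1=2\xi/p$ and $s_2=2/p$; on $\supp z$, $|p|\ge c\xi^2$ and $|\xi|\ge 1.5h^{1-\tau}$, so $|\xi|/|p|\le Ch^{\tau-1}$ and $1/|p|\le Ch^{2\tau-2}$, and differentiating (the $x$-dependence sitting only in the $p$'s in the denominator, again controlled by Lemma \ref{psymbolest}(2)) one checks $s_m\in h^{(\tau-1)m}W^{k_0}S_{1-\tau,1-\tau}(\Ps)$ for $m\in\{1,2\}$, supported in $\supp z$.

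Next I would differentiate each summand by $\p_x^\alpha\p_\xi^\theta$ with $|\alpha|\le k_0-j$, expand by Leibniz, and bound each resulting product of symbols using the two displays above and the inductive bound $q_l\in h^{l(2\tau-1)}W^{k_0-l}S_{1-\tau,1-\tau}$. For the $r_{j-l}$ term, writing $m=j-l$, the powers of $h$ combine as $h^{m}\cdot h^{(\tau-1)m}\cdot h^{l(2\tau-1)}\cdot h^{(\tau-1)m}\cdot h^{(\tau-1)(\alpha+\theta)}=h^{j(2\tau-1)}h^{(\tau-1)(\alpha+\theta)}$ since $l+m=j$ (the first $h^{(\tau-1)m}$ from the size of $r_m$, the second from the $m$ extra $\xi$-derivatives falling on $q_l$); the identical bookkeeping produces the same bound for the $s_{j-l}$ terms. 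The regularity count closes for every split of $\alpha$ in the Leibniz sum: for the $r$-term $\alpha'\le\alpha\le k_0-j\le k_0-m$ and $\alpha-\alpha'\le k_0-j\le k_0-l$, and for the $s$-term $\alpha+(j-l)\le k_0-l$. Summing over $l$ gives $|\p_x^\alpha\p_\xi^\theta q_j|\le Ch^{j(2\tau-1)}h^{(\tau-1)(\alpha+\theta)}$ for $|\alpha|\le k_0-j$, which is the claim.

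The main obstacle is that \emph{every} term produced is borderline: the identity $h^{m}\cdot h^{(\tau-1)m}\cdot h^{l(2\tau-1)}\cdot h^{(\tau-1)m}=h^{j(2\tau-1)}$ holds with no slack, so the coefficient estimates on $r_m$ --- equivalently, all the symbol bounds on $h^{2-\gamma}\p_x^m W/p$ and its derivatives --- must be proved sharply, with no room to discard the cancellation encoded in $|\nabla W|\le CW^{1/2}$; that is precisely why Lemma \ref{psymbolest}(2) is stated in the form it is. Everything else --- the elementary bounds on $s_1,s_2$, the Leibniz regularity bookkeeping, and uniformity of all constants in $h$ --- is routine.
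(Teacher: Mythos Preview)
Your proof is correct and follows the same inductive scheme as the paper's; the only difference is organizational. You package the coefficient factors as symbol-class statements $r_m,s_m\in h^{(\tau-1)m}W^{k_0-m}S_{1-\tau,1-\tau}$ and treat all terms $0\le l\le j-1$ uniformly, whereas the paper singles out the borderline term $l=j-1$ (equivalently $m=1$, where the bound $|\p_x p/p|\le Ch^{\tau-1}$ genuinely requires $|\p_x W|\le CW^{1/2}$) and handles the remaining $l\le j-2$ terms with the cruder estimate $|q_{l,j-l}/p|\le Ch^{2\tau-2}h^{(j-l)\tau}h^{l(2\tau-1)}$, which already suffices there. Your uniform bookkeeping is arguably cleaner; one small imprecision is that $\supp z=\{1.25h^{1-\tau}\le|\xi|\le 3\}$ rather than the set you wrote, but the argument is unaffected since $|\xi|\ge 1.25h^{1-\tau}$ and $\chi\equiv 1$ still hold there.
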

\begin{proof}
Since $\supp z \subset \{1.5 h^{1-\tau} < |\xi| < 3\}$ it is enough to show for $|\xi|<3$ and $\theta \in \Nb, \alpha \leq k_0-j$ that
$$
|\p_x^{\alpha} \p_{\xi}^{\theta} q_j| \leq C h^{j(2\tau-1)} h^{(\tau-1)(\alpha+\theta)}.
$$
This will be proved inductively in $j$. By Lemma \ref{q0symbol}, $q_0$ satisfies this. So assume $|\p_x^{\alpha} \p_{\xi}^{\theta} q_l| \leq Ch^{l(2\tau-1)} h^{(\tau-1)(\alpha+\theta)}$ for all $0 \leq l \leq k$ and it is enough to show for $\theta \in \Nb, \alpha \leq k_0-j$ 
$$
\sup_{|\xi| <3} |\p_x^{\alpha} \p_{\xi}^{\theta} q_{k+1} | \leq C h^{(k+1)(2\tau-1)} h^{(\tau-1)(\alpha+\theta)}.
$$ 
By definition
$$
q_{k+1} = - \frac{1}{p} \sum_{l=0}^{k} q_{l,k+1-l} = \frac{-q_{k,1}}{p} - \frac{1}{p} \sum_{l=0}^{k-1} q_{l,k+1}. 
$$
Since 
$$
q_{l,k+1-l} = Ch^{k+1-l} (\p_x^{k+1-l} p \p_{\xi}^{k+1-l} q_l + (-1)^{k+1-l} \p_{\xi}^{k+1-l} p \p_x^{k+1-l} q_l )
$$
then by the inductive assumption
$$
\sup_{|\xi| <3} |q_{l,k+1-l}| \leq \sup_{|\xi| <3} C h^{(k+1-l)\tau} |q_l|  \leq C h^{(k+1-l)\tau} h^{l(2\tau-1)}. 
$$
Therefore 
$$
\left| \frac{q_{l,k+1-l}}{p} \right| \leq Ch^{2\tau-2} h^{(k+1-l) \tau} h^{l(2\tau-1)} =C h^{2 \tau-2} h^{(k+1)\tau} h^{l(\tau-1)}. 
$$
Now for $0 \leq l \leq k-1$, since $\tau-1\leq0$. 
$$
h^{2 \tau - 2} h^{\tau(k+1)} h^{l(\tau-1)} \leq h^{2 \tau -2} h^{\tau(k+1)} h^{(k-1)(\tau-1)}.
$$
Note that
$$
2 \tau -2 + \tau k + \tau + k \tau - \tau - k +1 = 2 \tau k + 2 \tau - (k+1) = (2\tau-1)(k+1).
$$
Therefore 
$$
|q_{k+1}| \leq |\frac{q_{k,1}}{p}| + Ch^{(k+1)(2\tau-1)}.
$$
The $q_{k,1}$ term requires separate treatment. Recall $q_{k,1}= h(\p_x p \p_{\xi} q_k - \p_{\xi} p \p_x q_k)$. Using the arguments of part 2 of Lemma \ref{psymbolest} $|\frac{\p_x p}{p}|=|\frac{\p_x W}{p}|\leq C h^{\tau-1}$ and  $|\frac{\p_{\xi} p}{p}| \leq C h^{\tau-1}$ on $\supp z$ and by the inductive assumption $|\p_x q_k|, |\p_{\xi} q_k| \leq C h^{(\tau-1)} h^{k(2\tau-1)}$. Therefore 
$$
\sup_{|\xi|<3} \left| \frac{q_{k,1}}{p} \right| \leq \sup_{|\xi|<3}  C h \left( \left|\frac{\p_x p}{p}\right| |\p_{\xi} q_k| + \left|\frac{\p_{\xi} p}{p}\right| + |\p_x q_k| \right) \leq C h h^{\tau-1} h^{\tau-1} h^{k(2\tau-1)} = Ch^{(k+1)(2\tau-1)}. 
$$
It remains to be seen that $q_{k+1}$ has the correct behavior under differentiation. That is $|\p_x^{\alpha} \p_{\xi}^{\theta} q_{k+1}| \leq C h^{(k+1)(2\tau-1)} h^{(\tau-1)(\alpha+\theta)}$. Well
$$
\p_x^{\alpha} \p_{\xi}^{\theta} q_{k+1} = -\sum_{l=0}^k \p_x^{\alpha} \p_{\xi}^{\theta} \frac{q_{l,k+1-l}}{p} =- \sum_{l=0}^k \p_x^{\alpha} \p_{\xi}^{\theta} \left( \frac{h^{k+1-l}}{p} ( \p_x^{k+1-l} p \p_{\xi}^{k+1-l} q_l + (-1)^{k+1-l} \p_{\xi}^{k+1-l} p \p_x^{k+1-l} q_l ) \right)
$$
If a derivative falls on $\p_x p \p_{\xi} q$ or $\p_{\xi} p \p_{x} q$ this only produces an additional $h^{\tau-1}$. Furthermore, by the argument of Lemma \ref{q0symbol} any derivatives which fall on $\frac{1}{p}$ produce only $h^{\tau-1}$. Therefore
$$
|\p_x^{\alpha} \p_{\xi}^{\theta} q_{k+1}| \leq C h^{(\tau-1)(\alpha+\theta)} |q_{k+1}| \leq C h^{(\tau-1)(\alpha+\theta)} h^{(k+1)(2\tau-1)}. 
$$
This is exactly the desired inductive statement, which completes the proof. 
\end{proof}
With these symbol estimates it is straightforward to control the size of $\Op(q_j)$ on $L^2$. 
\begin{lemma}\label{qjl2}
$$
\| \Op(q_j) \|_{L^2 \ra L^2} = Ch^{j(2\tau-1)} h^{\tau-1}
$$
\end{lemma}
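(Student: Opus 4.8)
The plan is to deduce this from the symbol estimates of Lemma \ref{qjsymbol}, which carry all of the real content; only a soft $L^2$-continuity statement is needed on top of them. By Lemma \ref{qjsymbol}, the rescaled symbol $b_j := h^{j(1-2\tau)}q_j$ lies in $W^{k_0-j}S_{1-\tau,1-\tau}(\Ps)$ with seminorms bounded uniformly in $h$, and in $\beta$ so long as $h^2\beta<h^{2-2\tau}$ (the regime in which Lemmas \ref{psymbolest}--\ref{qjsymbol} were established). Since $\Op(q_j)=h^{j(2\tau-1)}\Op(b_j)$, it suffices to prove $\|\Op(b_j)\|_{L^2\to L^2}\le Ch^{\tau-1}$.

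The quickest route is to invoke the finite-regularity $L^2$-continuity theorem of Appendix \ref{pseudosection} (the Calder\'on--Vaillancourt theorem for symbols oscillating at scale $h^{1-\tau}$): applied to a symbol of class $W^{K}S_{1-\tau,1-\tau}$ with $K$ a fixed integer and $O(1)$ seminorms it gives $\|\Op(\cdot)\|_{L^2\to L^2}\le Ch^{-(1-\tau)}$, which is what we want. If one prefers a self-contained argument, Schur's test suffices and makes the exponent transparent. The Schwartz kernel $K_j(x,y)=(2\pi h)^{-1}\int e^{i(x-y)\xi/h}q_j(\tfrac{x+y}{2},\xi)\,d\xi$ is bounded in absolute value by $h^{-1}\int_{\supp z}|q_j|\,d\xi\le Ch^{-1}h^{j(2\tau-1)}$, since the $\xi$-support of $z$ has measure $O(1)$ and $|q_j|\le Ch^{j(2\tau-1)}$ there; integrating by parts $M$ times in $\xi$, which is legitimate because $q_j$ is smooth in $\xi$ with $|\p_\xi^\theta q_j|\le Ch^{j(2\tau-1)}h^{(\tau-1)\theta}$, improves this to $Ch^{-1}h^{j(2\tau-1)}(h^{\tau}/|x-y|)^M$, so $K_j$ decays rapidly in $|x-y|/h^{\tau}$. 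Hence $\sup_x\int|K_j(x,y)|\,dy$ and $\sup_y\int|K_j(x,y)|\,dx$ are both $\le Ch^{\tau-1}h^{j(2\tau-1)}$, and Schur's test gives the claim. (The extra factor $h^{\tau-1}$, relative to the naive guess $h^{j(2\tau-1)}$, is just the effective kernel width $h^{\tau}$ against the $h^{-1}$ in the oscillatory normalization; it is harmless for the parametrix construction.)

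I do not expect a genuine obstacle here. The one point to check is that the continuity theorem's fixed derivative count $K$ does not exceed $k_0-j$; this is automatic in the range where the lemma is used, since $j$ runs only over the finitely many parametrix indices $0\le j\le N$ appearing in the construction, $N$ is fixed once $\tau$ (equivalently $\gamma$) is chosen, and the standing hypotheses $k_0\ge 9$ and $\tm>\max\!\big(\tfrac{k_0+2}{2k_0-4},\tfrac{7}{k_0-1}\big)$ keep $k_0$ comfortably larger than $N$. Uniformity of the constant in $\beta$ is inherited directly from Lemma \ref{qjsymbol}.
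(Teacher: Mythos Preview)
Your primary approach—applying the Calder\'on--Vaillancourt-type bound of Lemma \ref{calderonvaillancourt} to the symbol estimates of Lemma \ref{qjsymbol}—is exactly the paper's proof; the worst term in $\sum_{\alpha,\theta\in\{0,1\}} h^{\theta}\lp{\p_x^{\alpha}\p_{\xi}^{\theta}q_j}{\infty}$ is $\alpha=1,\theta=0$, giving $Ch^{j(2\tau-1)}h^{\tau-1}$. Note that Lemma \ref{calderonvaillancourt} (via Boulkhemair) only requires $\alpha,\theta\in\{0,1\}$, so your concern about the derivative count exceeding $k_0-j$ is unnecessary: one $x$-derivative always suffices since $j\le k_0-6$. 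The Schur's test argument you sketch is a correct self-contained alternative, though not needed here.
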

\begin{proof}
This follows immediately from Lemma \ref{calderonvaillancourt} and Lemma \ref{qjsymbol}. In particular
\begin{align*}
\| \Op(q_j) \|_{L^2 \ra L^2}  &\leq C \sum_{\alpha,\theta \in \{0,1\}} h^{\theta} \lp{\p_x^{\alpha} \p_{\xi}^{\theta} q_j}{\infty} \leq \sum_{\alpha,\theta \in \{0,1\}} C h^{j(2\tau-1)} h^{(\tau-1)(\alpha+\theta)} h^{\theta} \\
&= \sum_{\alpha,\theta \in \{0,1\}} Ch^{j(2\tau-1)} h^{(\tau-1)\alpha} h^{\theta\tau} \leq C h^{j(2\tau-1)} h^{\tau-1}.
\end{align*}
\end{proof}

Now using these symbol and operator norm estimates it is possible to compute the composition of $\sum Q_j$ with $\Op(\chi p)$. 
\begin{lemma}\label{qjcomposechip}
If $W \in \Wkinf$ and $\tau \in [\tm, 1]$, 
$$
\left(\sum_{j=0}^{k_0-6} Q_j \right) \Op(\chi p) = h^{2-2\tau} Z + o(h^{3-2\tau}). 
$$
\end{lemma}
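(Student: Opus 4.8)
The plan is to run the standard semiclassical parametrix argument: compose $\sum_j Q_j$ with $\Op(\chi p)$ one symbol at a time using the low regularity Weyl composition formula, use the recursive definition of the $q_j$ to telescope away everything but the leading term $h^{2-2\tau}Z$, and then check that the composition remainders are $o(h^{3-2\tau})$.

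\textbf{Step 1 (composition of each $Q_l$ with $\Op(\chi p)$).} For each $0 \le l \le k_0-6$ I would apply the low regularity Weyl composition expansion (Lemma~\ref{lowregcompose}) to $Q_l \Op(\chi p) = \Op(q_l)\Op(\chi p)$. Both factors are admissible bounded symbols: $q_l$ lies in $W^{k_0-l} S_{1-\tau,1-\tau}(\Ps)$ after rescaling by $h^{l(2\tau-1)}$ (Lemmas~\ref{q0symbol} and~\ref{qjsymbol}), and $\chi p = \chi(\xi)(\xi^2 - h^2\beta + ih^{2-\gamma}W)$ is bounded (the cutoff $\chi$ confines $\xi$ to $|\xi|<4$) and has $k_0$ $x$-derivatives since $W \in \Wkinf$. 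Keeping the expansion terms $q_{l,0},\dots,q_{l,k_0-6-l}$ — where $q_{l,0}=q_l\chi p$ is the leading symbol and $q_{l,m}$ for $m\ge 1$ is exactly the term appearing in the recursion — and collecting the rest into a remainder $R_l$, this gives $Q_l\Op(\chi p)=\sum_{m=0}^{k_0-6-l}\Op(q_{l,m})+\Op(R_l)$.

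\textbf{Step 2 (telescoping).} For $l=0$, the leading term is $q_{0,0}=q_0\,\chi p=h^{2-2\tau}z$, using $p\cdot(z/p)=z$ and $\chi\equiv 1$ on $\supp z$, so $\Op(q_{0,0})=h^{2-2\tau}Z$. For $l\ge 1$, the definition $q_l=-\tfrac{1}{\chi p}\sum_{k=0}^{l-1}q_{k,l-k}$ gives $q_{l,0}=q_l\,\chi p=-\sum_{k=0}^{l-1}q_{k,l-k}$. Summing over $l$ and reindexing the double sum of subleading terms by total order $j=l+m$,
\[
\sum_{l=0}^{k_0-6}\sum_{m=1}^{k_0-6-l}\Op(q_{l,m})=\sum_{j=1}^{k_0-6}\sum_{l=0}^{j-1}\Op(q_{l,j-l})=-\sum_{j=1}^{k_0-6}\Op(q_{j,0}),
\]
so everything cancels except $\Op(q_{0,0})$, leaving
\[
\Big(\sum_{j=0}^{k_0-6}Q_j\Big)\Op(\chi p)=h^{2-2\tau}Z+\sum_{l=0}^{k_0-6}\Op(R_l).
\]

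\textbf{Step 3 (remainder estimate — the main obstacle).} It remains to show $\sum_l\Op(R_l)=o(h^{3-2\tau})$, and since the range of $l$ is fixed it suffices to bound each $\Op(R_l)$ in $L^2\to L^2$. This is where the finite regularity bites, and I expect it to be the bulk of the work. Three effects compete: (i) $q_l$ is only $W^{k_0-l}$, so the expansion can be carried only finitely far, which is why one truncates at order $k_0-6-l$ (the ``$-6$'' buffer leaves enough derivatives to invoke Lemma~\ref{lowregcompose} and to differentiate $q_l$ the required number of times); (ii) because the relevant $\xi$-scale is $h^{1-\tau}$, each extra composition term is smaller by only $h^\tau$ rather than $h$ — the $h$ from the prefactor is partly eaten by an $h^{\tau-1}$ whenever $\partial_\xi$ lands on $q_l$, while the $\partial_x$'s on $\chi p = \chi(ih^{2-\gamma}W+\cdots)$ are controlled via $|\partial_x W|\le CW^{1/2}$ and $|h^{2-\gamma}W/p|\le C$ exactly as in the proofs of Lemmas~\ref{psymbolest} and~\ref{qjsymbol}; (iii) $q_l$ carries the unfavorable factor $1/p$, of size $h^{-(2-2\tau)}$ on $\supp z$. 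Feeding the symbol seminorm bound of Lemma~\ref{qjsymbol} and the operator bound of Lemma~\ref{qjl2} into the quantitative remainder estimate of Lemma~\ref{lowregcompose}, $\|\Op(R_l)\|_{L^2\to L^2}$ carries the factor $h^{l(2\tau-1)}$ from $q_l$, a high power of $h^\tau$ from truncating past order $k_0-6-l$, and the $h^{\tau-1}$ from the $L^2$ bound on $\Op(q_l)$; the least favorable case $l=0$ is precisely what forces $\tau>\tfrac{k_0+2}{2k_0-4}$, which holds since $\tau\ge\tm$ (the role of that hypothesis flagged in the remarks after Theorem~\ref{maintheorem}), and every larger $l$ gives a strictly better bound. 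Hence each $\Op(R_l)=o(h^{3-2\tau})$, and summing over the fixed range of $l$ yields the claim.
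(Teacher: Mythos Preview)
Your approach is exactly the paper's: apply Lemma~\ref{lowregcompose} (part~3) to each $Q_l\Op(\chi p)$, telescope via the recursive definition of $q_j$, and bound the composition remainders. Steps~1 and~2 are correct and match the paper's argument essentially verbatim.

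Step~3 contains a sign error in the worst-case analysis. With $\tilde N=k_0-l-5$ the remainder for $Q_l\Op(\chi p)$ is $O_{L^2\to L^2}\bigl(h^{l(2\tau-1)}h^{(k_0-l-5)\tau}h^{5(\tau-1)}\bigr)$, and the total exponent simplifies to $l(\tau-1)+k_0\tau-5$. Since $\tau\le 1$ this is \emph{decreasing} in $l$: the gain $h^{l(2\tau-1)}$ from the smallness of $q_l$ does not compensate for the loss $h^{-l\tau}$ from the shorter expansion (net $h^{l(\tau-1)}$). Hence the least favorable case is $l=k_0-6$, not $l=0$, and it is precisely at $l=k_0-6$ that one needs $(2k_0-4)\tau>k_0+2$, i.e.\ $\tau>\tfrac{k_0+2}{2k_0-4}$; the case $l=0$ only forces the weaker $\tau>8/(k_0+2)$. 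Also, the extra ``$h^{\tau-1}$ from the $L^2$ bound on $\Op(q_l)$'' is spurious --- the remainder estimate in Lemma~\ref{lowregcompose} uses symbol seminorms, not $\|Q_l\|_{L^2\to L^2}$. Once you set the worst case to $l=k_0-6$ and drop that extra factor, your argument matches the paper's inequality~\eqref{eqellipticremainder}.
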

\begin{proof}
Applying Lemma \ref{lowregcompose} part 3 to the composition $Q_j \Op(\chi p)$, (with $N=k_0-j$ and $\tilde{N}=k_0-j-5$) produces 
$$
Q_j \Op(\chi p) = \sum_{k=0}^{k_0-6} \Op(q_{j,k}) + O_{L^2 \ra L^2}(h^{j(2\tau-1)} h^{\tau(k_0-j-5)} h^{5(\tau-1)}).
$$
The additional $h^{j(2\tau-1)}$ in the remainder term comes from the fact that $h^{j(1-2\tau)} q_j \in W^{k_0} S_{1-\tau, 1-\tau}$. 

Now, to control the remainder term, since $j \leq k_0-6$ and $\tau-1<0$
$$
j(2\tau-1) + \tau(k_0-j-5) + 5(\tau-1) = \tau k_0 + j(\tau-1)-5 \geq \tau k_0 +(k_0-6)(\tau-1) - 5.
$$
Furthermore $\tau \geq \tm > \frac{k_0+2}{2k_0-4}$ and $k_0 \geq 8 \geq 8\tau$ and so
\begin{equation}\label{eqellipticremainder}
\tau k_0 + (k_0-6)(\tau-1) - 5 = (2k_0+4)\tau -10\tau +1 >k_0+2-10\tau+1= 3-2\tau + (k_0-8\tau) \geq 3-2\tau,
\end{equation}
and the remainder error term is always of size $o(h^{3-2\tau})$. 

Now summing these composition expansions from $j=0$ to $j=k_0-6$ 
$$
\left(\sum_{j=0}^{k_0-6} Q_j \right) \Op(\chi p) =\sum_{j=0}^{k_0-6} \left( \sum_{k=0}^{k_0-j-6} \Op(q_{j,k})\right)+ o(h^{3-2\tau}) 
$$
$$
=\Op \left(\begin{array}{ccccccc}
q_{0,0} & +q_{0,1} & +q_{0,2} &+ q_{0,3} & + \cdots &   + q_{0,k_0-6}\\
 & +q_{1,0} & +q_{1,1} & +q_{1,2}& + \cdots  & + q_{1,k_0-7} \\
 &  & +q_{2,0} & + q_{2,1} & + \cdots  & + q_{2,k_0-8}\\
 &  &  & + q_{3,0}& + \cdots  & + q_{3,k_0-9} \\
  & &  & &  & +\cdots \\
  & &  & &   & +q_{k_0-6,0} \\
\end{array}\right) + o(h^{3-2\tau}).
$$
By construction of the $q_{j,k}$, all columns except for the first sum to zero leaving
$$
\left(\sum_{j=0}^{M-1} Q_j \right) \Op(\chi p) = \Op(q_{0,0}) + o(h^{3-2\tau}) = \Op(q_0 \chi p) + o(h^{3-2\tau}).
$$
Since $q_0=\frac{h^{2-2\tau} z(\xi)}{\chi p}, \Op(q_0 \chi p) = \Op(z(\xi)) = Z$ and this is the desired equality. 
\end{proof}

The composition of $\sum Q_j$ with $\Op(\chi p)$ can also be computed in a way that separates $\Op(\chi)$ and $P$.
\begin{lemma}\label{qjcomposechip2}
If $W \in \Wkinf$ and $\tau \in [\tm, 1]$, 
$$
\sum_{j=1}^{k_0-6} Q_j \Op(\chi p) = \sum_{j=1}^{k_0-6} Q_j \Op(\chi) P + o(h^{3-2\tau}).
$$
\end{lemma}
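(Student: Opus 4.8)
The plan is to show that $\Op(\chi)P-\Op(\chi p)$ equals, up to an $o(h^{3-2\tau})$ error, an operator whose symbol is supported in the region $\{3.5\le|\xi|\le 4\}$, which is disjoint from $\supp q_j\subset\supp z\subset\{|\xi|<3\}$. Composing such an operator on the left by $Q_j$, whose symbol is supported in $\{|\xi|<3\}$, then produces a negligible contribution by the pseudolocality of the calculus. Unlike Lemma \ref{qjcomposechip}, no sharp relation between $\tau$ and $k_0$ is needed here; only $\tau\ge\tm\ge 1/2$ and $k_0\ge 9$ will be used.

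First I would write $P=\Op(p)$ with $p=\xi^2-h^2\beta+ih^{2-\gamma}W$, which is an exact identity for the Weyl quantization on $\T$ (Appendix \ref{pseudosection}), and split off the unbounded part of $p$, since Lemma \ref{lowregcompose} applies only to bounded symbols. By linearity of composition,
$$
\Op(\chi)P=\Op(\chi(\xi)\xi^2)+\Op(-h^2\beta\chi(\xi))+\Op(\chi)\Op(ih^{2-\gamma}W),
$$
where the first two terms are exact since $\chi(\xi)$, $\xi^2$ and the constant $-h^2\beta$ are Fourier multipliers. To the last term I would apply the Weyl composition expansion of Lemma \ref{lowregcompose}; because $\chi$ depends only on $\xi$, the order-$k$ term is a fixed multiple of $h^k\Op(\partial_\xi^k\chi\,\partial_x^k(ih^{2-\gamma}W))=c_k h^{k+2-\gamma}\Op(\partial_\xi^k\chi\,\partial_x^k W)$, and the $k=0$ term $ih^{2-\gamma}\Op(\chi(\xi)W(x))$ combines with the first two pieces to reassemble $\Op(\chi p)$. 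Keeping $N$ terms (any fixed $N$ with $2\le N\le k_0-3$ works, using $W\in\Wkinf$) leaves
$$
\Op(\chi)P-\Op(\chi p)=\sum_{k=1}^{N}c_k h^{k+2-\gamma}\Op(\partial_\xi^k\chi\,\partial_x^k W)+R_N,\qquad R_N=\Olt(h^{N+3-\gamma}).
$$

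Next I would compose with $Q_j$, $1\le j\le k_0-6$, and sum. Since $\chi(\xi)=1$ for $|\xi|<3.5$, each symbol $\partial_\xi^k\chi\cdot\partial_x^k W$ with $k\ge 1$ is supported in $\{3.5\le|\xi|\le 4\}$, disjoint from $\supp q_j\subset\supp z$. Hence in the composition expansion of $Q_j\Op(\partial_\xi^k\chi\,\partial_x^k W)$ from Lemma \ref{lowregcompose} every term is a product of a derivative of $q_j$ with a derivative of $\partial_\xi^k\chi\,\partial_x^k W$ and so vanishes identically; only the remainder survives, and carrying it to a fixed high order — permitted by the available regularity, and controlled using the $S_{1-\tau,1-\tau}$ bounds for $q_j$ from Lemma \ref{qjsymbol} — gives $Q_j\Op(\partial_\xi^k\chi\,\partial_x^k W)=\Olt(h^M)$ for $M$ as large as desired, so after the prefactor $c_k h^{k+2-\gamma}$ with $k\ge 1$ it is $o(h^{3-2\tau})$. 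For the remainder, $\|Q_j\|_{L^2\ra L^2}\le Ch^{j(2\tau-1)}h^{\tau-1}\le Ch^{\tau-1}$ by Lemma \ref{qjl2} (as $\tau\ge 1/2$ and $j\ge 1$), so $\|Q_jR_N\|_{L^2\ra L^2}\le Ch^{N+2+\tau-\gamma}$, which is $o(h^{3-2\tau})$ once $N\ge 2$, since $\tau\ge 1/2$ and $\gamma\le 2$. Summing over the finitely many $j$ and $k$ then gives $\sum_{j=1}^{k_0-6}Q_j(\Op(\chi)P-\Op(\chi p))=o(h^{3-2\tau})$, as claimed.

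The only real obstacle is bookkeeping rather than conceptual content: one must check that Lemma \ref{lowregcompose} genuinely applies to $\Op(\chi)\Op(ih^{2-\gamma}W)$ (both symbols bounded, $\chi$ smooth with compact $\xi$-support, $W\in\Wkinf$), and that the finite-regularity remainder in the disjoint-support compositions $Q_j\Op(\partial_\xi^k\chi\,\partial_x^k W)$ is genuinely small — the $S_{1-\tau,1-\tau}$ scaling of $q_j$ makes each order in that remainder cost only $h^{\tau}$ instead of a full $h$, but the fixed separation $\gtrsim 1$ in $\xi$ between the two supports still yields an arbitrarily large power of $h$. After that it remains only to confirm that the exponents $N+3-\gamma$ and $M$ beat $3-2\tau$ once the factor $\|Q_j\|\lesssim h^{\tau-1}$ is taken into account, which they do comfortably for $k_0\ge 9$.
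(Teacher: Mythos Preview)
Your overall strategy is the same as the paper's: expand $\Op(\chi)P-\Op(\chi p)$ into terms whose symbols are supported where $\partial_\xi^k\chi\ne 0$ (hence away from $\supp q_j$), plus a remainder, and then compose with $Q_j$. Your treatment of the remainder $R_N$ is fine. The gap is in your handling of the disjoint-support terms.

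You assert that $Q_j\Op(\partial_\xi^k\chi\,\partial_x^kW)=\Olt(h^M)$ for $M$ as large as desired, because ``the fixed separation $\gtrsim 1$ in $\xi$ between the two supports still yields an arbitrarily large power of $h$.'' That would be true in the smooth calculus, but it is \emph{false} here: both symbols have only finite $x$-regularity. By Lemma~\ref{qjsymbol}, $h^{j(1-2\tau)}q_j\in W^{k_0-j}S_{1-\tau,1-\tau}$, and $\partial_\xi^k\chi\,\partial_x^kW\in W^{k_0-k}S_0$. Lemma~\ref{lowregcompose} (part 3) therefore allows the expansion only up to $\tilde N\le k_0-\max(j,k)-5$; every term in that expansion vanishes by disjoint support, but the remainder survives and is
\[
\Olt\bigl(h^{j(2\tau-1)}\,h^{\tilde N\tau-5(1-\tau)}\bigr),
\]
not $\Olt(h^\infty)$. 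The disjoint support is in $\xi$, yet the obstruction to taking $\tilde N$ large is the $x$-regularity, which the separation in $\xi$ does nothing to improve.

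Consequently your claim that ``no sharp relation between $\tau$ and $k_0$ is needed'' is also wrong. With the prefactor $h^{k}$ (or $h^{k+2-\gamma}$, which gives no extra help when $\gamma=2$) the total exponent is
\[
k+j(2\tau-1)+\tau(k_0-\max(j,k)-5)+5(\tau-1),
\]
exactly what the paper obtains, and showing this exceeds $3-2\tau$ for all $1\le j\le k_0-6$ is precisely where the paper invokes \eqref{eqellipticremainder}, i.e.\ the constraint $\tau>\tfrac{k_0+2}{2k_0-4}$. Your argument is repaired by replacing the ``$M$ arbitrary'' claim with this explicit remainder bound and the exponent check; after that it coincides with the paper's proof.
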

\begin{proof}
First, by Lemmas \ref{exoticcompose} and \ref{lowregcompose} part 1 (with $\ti{N}=3\leq k_0-5$ and $\rho=0$))
\begin{align*}
\Op(\chi) P &= \sum_{k=0}^{2} \frac{(ih)^k}{2^k k!} \Op(\p_{\xi}^k \chi \p_x^k p) + \Olt(h^3) \\
&=\Op(\chi p) + \sum_{k=1}^{2} C_k h^k \Op(\p_{\xi}^k \chi \p_x^k W) + \Olt(h^3).
\end{align*}
Therefore 
\begin{align*}
Q_j \Op(\chi p)= Q_j \Op(\chi)P - \sum_{k=1}^2 C_k h^k Q_j \Op(\p_{\xi}^k \chi \p_x^k W) + \Olt(h^3),
\end{align*}
and it remains to control terms of the form $Q_j \Op(\p_{\xi}^k \chi \p_x^k W)$. Well by Lemma \ref{lowregcompose} part 3 (since $\p_x^k W \in W^{k_0-k} S_0$ and $h^{j(1-2\tau)} Q_j \in W^{k_0-j} S_{1-\tau,1-\tau}$ take $\ti{N}=k_0-\max(j,k)-5$)
\begin{align*}
h^k Q_j \Op(\p_{\xi}^k \chi \p_x^k W) &= h^k \sum_{l=0}^{\ti{N}-1} \frac{(ih)^l}{2^l l!} (\p_y \p_{\xi} - \p_x \p_{\eta})^l (q_j(x,\xi) \p_{\eta}^k \chi(\eta) \p_y^k W(y)) \bigg|_{y=x, \eta=\xi} \\
&+ \Olt(h^k h^{j(2\tau-1)} h^{\tau(k_0-\max(j,k)-5)} h^{5(\tau-1)}).
\end{align*}

All the terms in the sum are 0 because $\chi \equiv 1$ on $\supp z \supset \supp q_j$ and so $\chi^{(k)} \equiv 0$ on $\supp q_j$. 

The size of the remainders can also be controlled. Since $j \leq k_0-6$ 
\begin{align*}
k+j(2\tau-1) + \tau(k_0-\max(j,k) -5) + 5(\tau-1) &= \tau k_0 + j(\tau-1) - 5 + k +j \tau -\max(j,k)\tau \\
&\geq \tau k_0 + j(\tau-1)-5 \\
&\geq \tau k_0 + (k_0-6)(\tau-1) -5 > 3-2\tau,
\end{align*}
where the last inequality follows from \eqref{eqellipticremainder}. 
Therefore
$$
Q_j \Op(\chi p) = Q_j \Op(\chi) P + o(h^{3-2\tau}),
$$
and so
$$
\sum_{j=1}^{k_0-6} Q_j \Op(\chi p) = \sum_{j=1}^{k_0-6} Q_j \Op(\chi) P + o(h^{3-2\tau}),
$$
as desired.
\end{proof}
With these two composition results the proof of the $h$ dependent elliptic estimate can be completed.
\begin{proof}[Proof of Lemma \ref{ellipticlemma}]
By Lemma \ref{qjcomposechip}
\begin{align*}
h^{2-2\tau} Zu &= \left(\sum Q_j \right) \Op(\chi p) u + o(h^{3-2\tau}) u.
\end{align*}
and by Lemma \ref{qjcomposechip2}
\begin{align*}
h^{2-2\tau} Zu &= \left(\sum Q_j \right) \Op(\chi p) u + o(h^{3-2\tau}) u= \left(\sum Q_j \right) \Op(\chi)P u + o(h^{3-2\tau}) u \\
&= \left(\sum Q_j \right) \Op(\chi) h^2 f + o(h^{3-2\tau}) u.
\end{align*}
Take the $L^2$ norm squared of both sides. Then by Lemma \ref{qjl2}, the $Q_j$ are bounded by $Ch^{\tau-1}$ on $L^2$ and $\Op(\chi)$ is bounded on $L^2$ by Lemma \ref{calderonvaillancourt}.
\begin{align*}
h^{4-4\tau} \lp{Zu}{2}^2 &\leq h^4 \lp{\sum Q_j \Op(\chi) f}{2}^2 + o(h^{6-4\tau}) \lp{u}{2}^2 \\
& \leq Ch^4 h^{\tau-1} \lp{f}{2}^2 + o(h^{6-4\tau}) \lp{u}{2}^2.
\end{align*}
Finally multiply both sides by $h^{4\tau-4}$ to obtain the desired inequality. 
\end{proof}

\subsection{$\xi>c$ Elliptic estimate, Lemma \ref{ellipticlemmaweak}}
This proof follows the conventional semiclassical parametrix argument with the caveat that $W$ is treated as a perturbation. This allows the parametrix construction to be exact, as it involves only Fourier multipliers. Because of this there are no compositions involving $W$ and so the regularity of $W$ is not involved in this proof. This same construction can also be used to prove an $h$ dependent elliptic estimate, however treating $W$ perturbatively produces an error term that weakens the estimate. This lessens the improvement the estimate makes when applied to error terms in the propagation argument and would weaken the overall conclusion.

\begin{proof}
Define 
$$
\ti{q_0} = \frac{\ti{z}(\xi)}{\xi^2-h^2 \beta}.
$$
Noting that $\ti{q_0} \in S^0_0$ since $\xi>1$ on $\supp \ti{z}$ and $h^2 \beta < 1$. Now set $\ti{Q_0} = \Op(\ti{q_0})$ and let $p_0=\xi^2 -h^2 \beta, P_0 = -h^2 \p_x^2 -h^2 \beta$. Since $q_0$ and $p_0$ both depend only on $\xi$ their composition is exact 
$$
\ti{Q_0} P_0 = \Op(\ti{q_0} p_0) = \ti{Z}.
$$
Now since $P_0+ih^{2-\gamma} W = P$
\begin{align*}
\ti{Z} u &= \ti{Q_0} P u - \ti{Q_0}(ih^{2-\gamma} W u) \\
&= h^2 \ti{Q_0}f - \ti{Q_0}(ih^{2-\gamma} W u). 
\end{align*}
Take the $L^2$ norm squared of both sides then use that $\ti{Q_0}$ is bounded on $L^2$ by Lemma \ref{calderonvaillancourt}
\begin{align*}
\ltwo{\ti{Z} u}^2 &\leq h^4 \ltwo{\ti{Q_0} f}^2 + h^{4-2\gamma} \ltwo{\ti{Q_0}  Wu}^2 \\
&\leq Ch^4 \ltwo{f}^2 + C h^{4-2\gamma} \ltwo{Wu}^2.
\end{align*}
Finally use that $W^2 \leq C W$ and \eqref{hdampest}, the damped region estimate, to obtain
$$
\ltwo{\ti{Z} u}^2\leq Ch^4 \ltwo{f}^2 + Ch^{4-\gamma} \lpfu.
$$
\end{proof}
\section{Proof of propagating region estimate Lemma \ref{mainestimate}}\label{mainestimateproof}
With the elliptic and damped region estimates proved, it remains to prove the estimate for the propagating region, that is Lemma \ref{mainestimate}. The plan for this section is as follows: first the computation of a commutator in two different ways, second the estimation of terms in the computation using expansions of compositions of pseudodifferential operators. 

\begin{proof}[Proof of Lemma \ref{mainestimate}]
Set $a=x \chi(x) (\xi h^{\tau-1}) \psi(\xi h^{\tau-1})$ and $A=\Op(a)$. Note that by Lemma \ref{calderonvaillancourt}, $A$ is bounded on $L^2$. To begin, compute $h^{1-\tau} (AP-P^* A)$ in two different ways
\begin{equation}\label{startest}
h^{1-\tau} \<[h^2 \p_x^2,  A]u, u\> + i h^{3-\gamma-\tau}  \<(AW+WA)u ,u \>= h^{1-\tau} \<(AP-P^*A)u ,u \>  = 2i h^{3-\tau} Im\<f,  Au\>.
\end{equation}
This equation is the basis of the proof. The right hand side is a term of the form $Ch^{3-\tau} \lpfu$, which is the primary term in the estimate. 

On the left hand side $h^{3-\gamma-\tau} (AW+WA)$ will produce a $h^{3-\tau} \lpfu$ and two error terms: $h^{2-\gamma+6\tau} \ltwo{f}^2$ and $o(h^3) \ltwo{u}^2$.  The $ h^{1-\tau} [h^2 \p_x^2, A]$ term will provide the $h^3 \beta Ju$ term as well as another $h^{3-\tau} \lpfu$ and error terms $o(h^3) \ltwo{u}^2$ and $h^{2-\gamma+6\tau} \ltwo{f}^2$. 

Note that most of these terms have a common factor of $h^3$ which will be divided out in order to obtain the final conclusion. Because of this throughout the section error terms must be of size $o(h^3)$ to be negligible.

I will first compute the $AW$ and $[h^2 \p_x^2, A]$ terms and then use them to prove Lemma \ref{mainestimate}. Subsection \ref{awsubsec} estimates the damping anti-commutator $(AW+WA)$, subsection \ref{hdxsubsec} estimates the Laplacian commutator $[h^2 \p_x^2, A]$ and subsection \ref{synthesis} synthesizes these to complete the proof of Lemma \ref{mainestimate}. 

\textbf{Remark} In this section I write 
$$
a^{(j)}(\xi h^{\tau-1})= h^{j(1-\tau)} \p_{\xi}^j (a(x,\xi h^{\tau-1})) = h^{j(1-\tau)} x \chi(x) \p_{\xi}^j( (\xi h^{\tau-1}) \psi(\xi h^{\tau-1})).
$$
Note that $a^{(j)} \in S^{0}_{1-\tau}(\Ps)$, see Appendix \ref{pseudosection} for the definition of $\Smr$. The utility of this notation is that $h^j \p_{\xi}^j a = h^{j \tau} a^{(j)}$, which simplifies composition expansions. This agrees with the standard usage of the notation: if $\psi^{(k)} (\xi h^{\tau-1})$ is the $k$th derivative of $\psi$ evaluated at $\xi h^{1-\tau}$ then $h^{k(1-\tau)} \p_{\xi}^k (\psi(\xi h^{\tau-1}) ) =  \psi^{(k)}(\xi h^{\tau-1})$. 

Also in this section, recall that there is a fixed $\e_2>0$ and it is assumed that $h^2 \e_2 < h^2 \beta < h^{2-2\tau}$. This assumption is needed in order to apply the elliptic estimate (Lemma \ref{ellipticlemma}) in order to control the size of error terms. 

\subsection{Damping Anti-commutator estimate}\label{awsubsec}
In order to estimate $h^{3-\gamma-\tau}(AW+WA)$ I will write it as $h^{3-\gamma-\tau}v_j  A v_j$ plus error terms. The $h^{3-\gamma-\tau} v_j  A v_j$ term can be controlled using the damping estimate and is of size $h^{3-\tau}\lpfu$. The error terms are either of size $o(h^3),$ which is small enough to be negligible or are supported on the elliptic set of $P$ and can be further controlled by the elliptic estimate and Lemma \ref{commutebump}. The terms controlled with the elliptic estimate will produce the $h^{2-\gamma+6\tau} \ltwo{f}^2$. 
In particular in this subsection I will show 
\begin{equation}\label{awest}
h^{3-\gamma-\tau} |\<(AW+WA)u, u\>| \leq Ch^{3-\tau} \lp{f}{2} \lp{u}{2} + Ch^{2+6\tau-\gamma} \lp{f}{2}^2 + o(h^3)\lp{u}{2}^2.
\end{equation}
To begin recall that $W=\sum v_j^2$ so 
\begin{align*}
h^{3-\gamma-\tau} (AW+WA) &= h^{3-\gamma-\tau}  \sum_j v_j^2 A +  A v_j^2 \nonumber \\
&=\sum_j h^{3-\gamma-\tau} v_j  A v_j + \sum_j h^{2-\gamma} [[h^{1-\tau}A,v_j],v_j].
\end{align*}
To control the first term use Lemma \ref{calderonvaillancourt} to see $A$ is bounded on $L^2$
$$
h^{3-\gamma-\tau} \sum_j |\<v_j A v_j u,u\>| \leq C h^{3-\gamma-\tau} \sum_j \lp{v_ju}{2}^2.
$$
Then since $v_j^2 \leq \sum_j v_j^2 = W$ use \eqref{hdampest}
\begin{align*}
C h^{3-\gamma-\tau} \sum_j \lp{v_ju}{2}^2 = C h^{3-\gamma-\tau} \lp{W^{1/2} u }{2}^2 \leq C h^{3-\tau} \lp{f}{2} \lp{u}{2} .
\end{align*}
Combining these inequalities gives
\begin{align}\label{awop}
h^{3-\gamma-\tau} |\<(AW+WA)u,u\>| \leq Ch^{3-\tau} \lpfu + \sum_j h^{2-\gamma} |\<[[h^{1-\tau} A, v_j], v_j]u, u\>|.
\end{align}
The sum of double commutators will be error terms. Its size can be controlled using the elliptic estimate, to do so the double commutator must first be computed.
\begin{lemma}\label{doublecommutator} 
If $v_j \in W^{k_0, \infty}$ and $\tau \in [\tm, 1]$, then
$$
\doublecommutatoreq
$$
\end{lemma}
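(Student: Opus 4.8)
The plan is to compute the iterated commutator $[[h^{1-\tau}A, v_j], v_j]$ by repeatedly applying the exact/asymptotic composition formula for the Weyl quantization, keeping careful track of which powers of $h^\tau$ survive. The starting point is that $A = \Op(a)$ with $a = x\chi(x)\,(\xi h^{\tau-1})\psi(\xi h^{\tau-1}) \in S^0_{1-\tau}(\Ps)$, and $v_j = v_j(x)$ is an $x$-only multiplication operator lying in $W^{k_0,\infty}$. First I would observe that since $v_j$ depends on $x$ only, in the Weyl composition expansion for $\Op(v_j)\Op(a)$ and $\Op(a)\Op(v_j)$ only $\xi$-derivatives hit $a$ and only $x$-derivatives hit $v_j$, so the $k$-th term of the single commutator $[\Op(v_j), \Op(a)]$ is, up to constants, $h^k(1-(-1)^k)\,\Op(\p_x^k v_j\, \p_\xi^k a)$ — the even terms cancel by the antisymmetry of the Weyl bracket, which is why the factors $(1-(-1)^k)$ appear. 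Using the $a^{(k)}$ notation from the Remark, $h^k \p_\xi^k a = h^{k\tau} a^{(k)}$, so each term carries $h^{k\tau}$ rather than $h^k$.

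Next I would iterate: $[h^{1-\tau}A, v_j]$ is itself (to the relevant order) a sum $\sum_k c_k h^{k\tau+1-\tau}\,\Op(\p_x^k v_j\, a^{(k)})$ with only odd $k$ contributing, and then commuting once more against $v_j$ I apply the same composition formula, now with the symbol $\p_x^k v_j\, a^{(k)}$ in place of $a$. The outer bracket again kills even-order terms and produces factors $\p_x^l v_j$ and $\p_\xi^l(\p_x^k v_j\, a^{(k)}) = \p_x^k v_j\, \p_\xi^l a^{(k)}$ (the $\xi$-derivatives cannot touch $v_j$), and $h^l\p_\xi^l a^{(k)} = h^{l\tau} a^{(k+l)}$ after absorbing powers of $h^{1-\tau}$; this yields the stated double sum with total $h$-power $h^{\tau(k+l)+1-\tau}$, the combinatorial constants $\tfrac{i^{m+l}}{2^{k+l}k!l!}$ (with the appropriate $i$ and $2$ powers from the Weyl bracket $\tfrac{1}{2i}(\p_y\p_\xi-\p_x\p_\eta)$ iterated), and the cancellation factors $(1-(-1)^k)(1-(-1)^l)$. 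The summation ranges $1 \le k,l \le k_0-6$ are dictated by how many composition terms I am entitled to extract given $v_j \in W^{k_0,\infty}$ (each composition consumes a fixed number of derivatives, here five, matching the usage of part 3 of Lemma \ref{lowregcompose} and Lemma \ref{exoticcompose}).

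The main obstacle — and the only genuinely delicate point — is the remainder estimate: I must show that truncating both composition expansions at order $k_0-6$ leaves an error of size $o(h^3)$ in operator norm on $L^2$. This requires combining the size of the symbols $a^{(k)}, \p_x^k v_j \in S^0_{1-\tau}$ (bounded, with each $\xi$-derivative costing $h^{\tau-1}$) with the quantitative remainder bounds from Lemma \ref{lowregcompose}, exactly as in the elliptic-estimate remainder computation \eqref{eqellipticremainder}: the innermost composition's tail is $O(h^{\tau(k_0-\cdots)} h^{\cdots(\tau-1)})$, and the constraint $\tau \ge \tm > \max\!\big(\tfrac{k_0+2}{2k_0-4}, \tfrac{7}{k_0-1}\big)$ together with $1-\tau \le 0$ forces the exponent to exceed $3$; in particular the constraint $\tm > \tfrac{7}{k_0-1}$ (yielding \eqref{sharptm2}) is what makes the leftover terms genuinely $o(h^3)$ rather than merely $O(h^3)$. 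I would also need to check that the extra factor $h^{1-\tau}$ in front, combined with the base $h^2$ from $h^{3-\gamma-\tau}$ bookkeeping, lands the surviving terms at the advertised power $h^{\tau(k+l)+1-\tau}$ and the remainder strictly below $h^3$, so that after dividing by $h^3$ at the end of the proof of Lemma \ref{mainestimate} these contributions vanish as $h\to0$. Everything else is routine symbol calculus.
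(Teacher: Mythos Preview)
Your plan is correct and follows essentially the same route as the paper: iterate the Weyl composition expansion against the multiplier $v_j$, use that only $\xi$-derivatives hit $a$ and only $x$-derivatives hit $v_j$, absorb $h^k\p_\xi^k a = h^{k\tau}a^{(k)}$, and bound the truncation remainder by $o(h^3)$ via the inequality $(k_0-1)\tau - 4 > 3$ coming from $\tau \ge \tm > \tfrac{7}{k_0-1}$ (this is exactly \eqref{sharptm2}). Two minor corrections: the relevant parts of Lemma \ref{lowregcompose} here are parts 1 and 2 (not part 3, which is reserved for the $S_{\rho,\rho}$ symbols of the elliptic parametrix), and the constraint $\tm > \tfrac{k_0+2}{2k_0-4}$ plays no role in this lemma---only $\tm > \tfrac{7}{k_0-1}$ is used, and it is sharp on the remainder bound.
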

Before proving this in the finite regularity case it is useful outline the proof  when $v_j$ is smooth, as the argument is simpler but has the same structure. Fix $M> \lceil\frac{2}{\tau}+1 \rceil$ and apply Lemma \ref{exoticcompose} to compute the commutator 
\begin{align*}
[h^{1-\tau}A, v_j] &= h^{1-\tau} \sum_{k=0}^{M-1} \frac{(ih)^k}{2^k k!}  (1-(-1)^k) \Op(\p_x^k v_j \p_{\xi}^k a)  + \Olt(h^{M\tau+1-\tau}).
\end{align*}
Then apply Lemma \ref{exoticcompose} again, to compute the double commutator
\begin{align*}
[[h^{1-\tau} A, v_j], v_j] &=  h^{1-\tau}\sum_{l=0}^{M-1} \sum_{k=0}^{M-1} \frac{(ih)^{k+l}}{2^{k+l} k! l!} (1-(-1)^k)(1-(-1)^l) \Op(\p_x^k v_j \p_x^l v_j \p_{\xi}^{k+l} a) + \Olt(h^{M\tau+1-\tau}).
\end{align*}
Since $M>\lceil\frac{2}{\tau}+1 \rceil$, $M \tau + 1 - \tau > 3$. Therefore 
\begin{align*}
[[h^{1-\tau} A, v_j], v_j]= h^{1-\tau} \sum_{l=0}^{M-1} \sum_{k=0}^{M-1} \frac{(ih)^{(k+l)}}{2^{k+l} k! l!} (1-(-1)^k)(1-(-1)^l)  \Op(\p_x^k v_j \p_x^l v_j \p_{\xi}^{k+l} a) + o(h^3). 
\end{align*}
The terms with $l=0$ or $k=0$ drop out because of the factors $1-(-1)^k$ or $1-(-1)^l$. The final step is to substitute $h^{k+l} \p_{\xi}^{k+l} a = h^{\tau(k+l)} a^{(k+l)}$, which gives an expansion of the desired form. 

When $W$ is not smooth there are two changes: the remainder term is larger and additional care must be taken to track the exact number of derivatives used. The computation of the size of the remainders in this proof are the reason  $\tm> \frac{7}{k_0-1}$ is required. Other remainder size calculations in this section involving $\tau$ make use of this relationship between $\tm$ and $k_0$ but are not sharp on it. 

\begin{proof}[Proof of Lemma \ref{doublecommutator}]
Since $v_j \in W^{k_0, \infty}$ use part 2 of Lemma \ref{lowregcompose} (with $N=k_0$) to compute the commutator 
\begin{align*}
[h^{1-\tau}A, v_j] &= \sum_{k=0}^{k_0-6} \frac{(ih)^k}{2^k k!} h^{1-\tau} (1-(-1)^k)  \Op(\p_x^k v_j \p_{\xi}^k a) + \Olt(h^{(k_0-5) \tau - 5(1-\tau) + 1 -\tau}). 
\end{align*}
There are two key differences between this and the smooth case. The expansion can only be taken to the term $k_0-6$  and the remainder term has an additional $h^{-5(1-\tau)}$. These two changes are connected; in order to show that the remainder term is a bounded operator on $L^2$ the symbol must be in $W^{5, \infty}$. Those derivatives of the symbol appear in the $L^2$ operator norm of the remainder and each $\xi$ derivative of $a$ produces a factor $h^{1-\tau}$. See Appendix A for a more detailed proof and discussion. 

The relationship between the regularity of the damping, $k_0,$ and $\tau$ guarantees that the remainder term is $o(h^3)$. In particular since $\tau \geq \tm > \frac{7}{k_0-1}$. 
\begin{equation}\label{sharptm2}
(k_0-5) \tau - 5(1-\tau) + 1 -\tau=(k_0-5) \tau - 4(1-\tau) = (k_0-1)\tau -4 > (k_0-1) \frac{7}{k_0-1} - 4  = 3.
\end{equation}

Now replacing $h^k \p_{\xi}^k a = h^{\tau k} a^{(k)}$ gives
$$
[h^{1-\tau} A, v_j] = \sum_{k=0}^{k_0-6} \frac{i^k h^{k \tau}}{2^k k!} (1-(-1)^k) h^{1-\tau} \Op(\p_x^k v_j a^{(k)}) + o(h^3).
$$
To finish computing $[[A,v_j],v_j]$ Lemma \ref{lowregcompose}  will be applied again, paying special attention to the terms in the sum that are not $o(h^3)$. In particular these terms will be supported on the elliptic set of $P$ and so can be further estimated. However, these terms have derivatives of $v_j$ in them and so their regularity must be carefully tracked. 

Since $v_j \in W^{k_0, \infty}$ and $\p_x^k v_j a^{(k)} \in W^{k_0-k} S_{\rho},$ I can apply Lemma \ref{lowregcompose} part 1 with $N=k_0$  (as $k_0 -k \geq 6 > 5$) and obtain 
\begin{align*}
h^{1-\tau} h^{k \tau} [\Op(\p_x^k v_j a^{(k)}), v_j] = \sum_{l=0}^{k_0-6} &\frac{i^l}{2^l l!} h^{1-\tau} h^{\tau(k+l)} (1-(-1)^l) \Op(\p_x^k v_j \p_x^l v_j a^{(k+l)})  \\
&+ \Olt(h^{(k_0-5)\tau-5(1-\tau) + 1-\tau+k\tau}).
\end{align*}
Where I have replaced $h^{\tau k} h^{l} \p_{\xi}^l a^{(k)} = h^{\tau(k+l)} a^{(k+l)}$. The remainder term is of size  $(k_0-5+k)\tau - 5(1-\tau)+1-\tau$. Since $k\geq 0$, by the same argument as above, the remainder term is $o(h^3)$. 

So combining 
\begin{align*}
h^{1-\tau} [[A,v_j],v_j] = \sum_{k=0}^{k_0-6} &\frac{i^k}{2^k k!} (1-(-1)^k) h^{1-\tau} h^{k \tau} [\Op(\p_x^k v_j a^{(k)}), v_j] + o(h^3) \\
&=\sum_{k=0}^{k_0-6} \sum_{l=0}^{k_0-6} \frac{i^{k+l}}{2^{k+l} k! l!} (1-(-1)^k)(1-(-1)^l) h^{1-\tau} h^{(k+l)} \Op(\p_x^k v_j \p_x^l v_j a^{(k+l)}) + o(h^3). 
\end{align*}
The terms with $k=0$ or $l=0$ again vanish because of the factors $1-(-1)^k$ and $1-(-1)^l$. This gives the formula in the statement of the lemma.
\end{proof}

Now the size of $[[A,v_j],v_j]$ can be further controlled since $\Op(\p_x^k v_j \p_x^l v_j a^{(k+l)})$ has support contained in $\{2h^{1-\tau} < |\xi|< 3 h^{1-\tau}\}$, which is contained in the elliptic set of $P$. Because of this these terms can be further estimated using the elliptic estimate. 

\begin{lemma}\label{ellipticestawaw}
If $v_j \in W^{k_0, \infty}$ and $\tau \in [\tm, 1]$ then 
\begin{equation}\label{anticommutatorest}
|\<[[h^{1-\tau} A,v_j], v_j]u, u\>| \leq C h^{6\tau} \lp{f}{2}^2 + o(h^3) \ltwo{u}^2. 
\end{equation}
\end{lemma}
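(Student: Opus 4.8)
The plan is to feed the explicit expansion of the double commutator furnished by Lemma \ref{doublecommutator} directly into the elliptic estimate. The key observation is that \emph{every} term surviving in that expansion is microlocally supported in the $h$-dependent elliptic region of $P$, so after sandwiching it between two copies of the elliptic cutoff $Z$ and applying Lemma \ref{ellipticlemma} it becomes as small as claimed. The only genuinely structural point is that one must use $\ltwo{Zu}^2$ rather than $\ltwo{Zu}\ltwo{u}$, i.e.\ insert two copies of $Z$, not one.

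Carrying this out: by Lemma \ref{doublecommutator},
\[ [[h^{1-\tau}A, v_j], v_j] = \sum_{k,l} c_{k,l}\, h^{\tau(k+l)+1-\tau}\, \Op(\p_x^k v_j\, \p_x^l v_j\, a^{(k+l)}) + o(h^3), \]
where the sum runs over odd $k,l$ with $1\le k,l\le k_0-6$; pairing the $o(h^3)$ remainder with $u$ already contributes $o(h^3)\ltwo{u}^2$. For each surviving term $k+l\ge 2$, so $a^{(k+l)}$ is built only from $\psi^{(j)}$ with $j\ge 1$ and hence has $\xi$-support contained in $\{2h^{1-\tau}\le|\xi|\le 3h^{1-\tau}\}$; for $h$ small and $\tau<1$ this lies in the set where the cutoff $z$ of Lemma \ref{ellipticlemma} is identically $1$ (for the borderline value $\tau=1$ the same region lies where $\ti z\equiv 1$ and one uses $\ti Z$ and Lemma \ref{ellipticlemmaweak} instead, with an analogous and easier count). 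By Lemma \ref{commutebump}, applied on both sides, I may therefore replace $\Op(\p_x^k v_j\, \p_x^l v_j\, a^{(k+l)})$ by $Z\,\Op(\p_x^k v_j\, \p_x^l v_j\, a^{(k+l)})\,Z$ at the cost of an $\Olt$ remainder whose contribution, after multiplication by the prefactor $h^{\tau(k+l)+1-\tau}\ge h^{\tau+1}$ and pairing with $u$, is $o(h^3)\ltwo{u}^2$; the expansion order needed here is available because $\p_x^k v_j\, \p_x^l v_j\in W^{k_0-\max(k,l),\infty}$ with $\max(k,l)\le k_0-6$. Since $\Op(\p_x^k v_j\, \p_x^l v_j\, a^{(k+l)})$ is bounded on $L^2$ by Lemma \ref{calderonvaillancourt} and $Z$ is self-adjoint, the $(j,k,l)$-term of $|\<[[h^{1-\tau}A, v_j], v_j]u, u\>|$ is bounded by $C h^{\tau(k+l)+1-\tau}\ltwo{Zu}^2 + o(h^3)\ltwo{u}^2$.

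Now apply Lemma \ref{ellipticlemma} — available because $h^2\beta<h^{2-2\tau}$ is assumed throughout Section \ref{mainestimateproof} — which gives $\ltwo{Zu}^2\le Ch^{5\tau-1}\ltwo{f}^2 + o(h^2)\ltwo{u}^2$. Hence the $(j,k,l)$-term is at most
\[ C h^{\tau(k+l+4)}\ltwo{f}^2 + o\big(h^{\tau(k+l-1)+3}\big)\ltwo{u}^2 \le Ch^{6\tau}\ltwo{f}^2 + o(h^3)\ltwo{u}^2, \]
the last inequality because $k+l\ge 2$ and $\tau\ge\tm>\tfrac12$. Summing over the finitely many admissible $k,l$ and over $j$ yields \eqref{anticommutatorest}.

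The main obstacle is not in this assembly but in the two supporting facts. First, one really does need the two-sided insertion of $Z$: if instead one estimates a single-$Z$ expression $h^{\tau(k+l)+1-\tau}\<\Op(\cdot)Zu,u\>$ by Cauchy--Schwarz, feeding in $\ltwo{Zu}\le Ch^{(5\tau-1)/2}\ltwo{f}+o(h)\ltwo{u}$ and then using Young's inequality, the best $\ltwo{f}^2$-exponent obtainable is $7\tau+1$, which is strictly less than the $6\tau+3$ one would need to simultaneously keep the $\ltwo{u}^2$-remainder below $h^3$ once $\tau<1$ — so the estimate would fail. Second, one must verify that the composition error in Lemma \ref{commutebump} incurred by inserting $Z$ is genuinely $o(h^3)$ after the prefactor is accounted for, which requires tracking the finite regularity of $v_j$ against the factor $h^{\tau-1}$ produced by each $\xi$-derivative of a symbol at scale $h^{1-\tau}$; this is where the constraints relating $\tm$ to $k_0$ are felt.
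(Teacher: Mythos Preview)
Your proposal is correct and follows essentially the same approach as the paper: expand via Lemma \ref{doublecommutator}, observe that each surviving term is supported in the $h$-dependent elliptic region, sandwich by $Z$ on both sides using Lemma \ref{commutebump} (checking the finite-regularity remainder against the $\tm>7/(k_0-1)$ constraint), and then apply Lemma \ref{ellipticlemma} to $\ltwo{Zu}^2$. Your remark on the necessity of the two-sided $Z$ insertion and your explicit handling of the borderline case $\tau=1$ via $\ti Z$ are points the paper leaves implicit or glosses over, but otherwise the arguments coincide.
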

\begin{proof}
For this proof I will use the notation $\bkl = \p_x^k v_j \p_x^l v_j a^{(k+l)}$. In this notation Lemma \ref{doublecommutator} is
$$
[[h^{1-\tau} A, v_j], v_j] = \sum_{k=1}^{k_0-6} \sum_{l=1}^{k_0-6} C_{k,l} h^{\tau(k+l)} h^{1-\tau} \Op(b_{k,l}) + o(h^3).
$$
The elliptic estimate as written can't be applied to $\Op(\bkl)$ directly because the symbol is not smooth. Instead, I reintroduce the operator $Z$ defined in Lemma \ref{ellipticlemma}. 

The key property here is that $Z \equiv 1$ on $\{1.5 h^{1-\tau} < |\xi| < 2\} \supset \supp(\bkl)$. This along with $\bkl \in W^{k_0 - \max(k,l)}S_{1-\tau}(\Ps)$, means Lemma \ref{commutebump} can be applied with $N=k_0-\max(k,l)$ 
\begin{align*}
\Op(\bkl) &= Z \Op(\bkl) Z +  \Olt(h^{\tau (k_0-\max(k,l))-5}).
\end{align*}
That is conjugating $\Op(\bkl)$ by $Z$ is $\Op(\bkl)$ modulo an error term. Note that there is less regularity for larger $l,k$ and so the error term is larger. However after reintroducing the $h^{1-\tau} h^{(k+l) \tau}$ from the sum the error terms can be uniformly controlled
\begin{align*}
h^{1-\tau} h^{(k+l)\tau} \Op(\bkl) &= h^{1-\tau} h^{(k+l)\tau}  Z \Op(\bkl) Z + \Olt(h^{(k_0-\max(k,l))\tau-5} h^{\tau(k+l)} h^{1-\tau}).\nonumber
\end{align*}
In particular since $\tau \geq \tm >\frac{7}{k_0-1}$ and $k, l \geq 1$ 
$$
(k_0-\max(k,l) )\tau - 5 + \tau(k+l) + 1 - \tau = (k_0 + \min(k,l) -1) \tau -4 > \frac{7}{\tau} \tau -4 + \tau = 3 + \tau,
$$
and the error term is $o(h^3)$. 

Therefore
\begin{align}\label{ellipticaw}
h^{1-\tau} h^{(k+l)\tau} \Op(\bkl) &= h^{1-\tau} h^{(k+l)\tau}  Z \Op(\bkl) Z+\olt(h^{3}).
\end{align}
 
Now, apply \eqref{ellipticaw} term by term to Lemma \ref{doublecommutator} 
\begin{align*}
|\<[[h^{1-\tau} A,v_j],v_j] u, u\>| &\leq |\<Z h^{1-\tau} [[A,v_j],v_j]Zu, u\>| + o(h^3) \ltwo{u}^2. 
\end{align*}
Then use the self-adjointness of $Z$ and the H\"older inequality to write 
\begin{align*}
|\<Zh^{1-\tau} [[A,v_j],v_j]Zu, u\>|  &\leq \ltwo{h^{1-\tau} [[A,v_j], v_j] Z u} \ltwo{Zu}.
\end{align*}
Now note that for $k,l \geq 1,$ 
$$
h^{1-\tau} h^{(k+l) \tau} \Op(\bkl)=\Olt(h^{1+\tau}),
$$ 
and $h^{1-\tau}[[A,v_j], v_j]=\sum_{k, l \geq 1} h^{1-\tau} h^{(k+l) \tau} \Op(\bkl) + o(h^3) = \Olt(h^{1+\tau})$. Therefore 
\begin{align*}
\ltwo{h^{1-\tau} [[A,v_j], v_j] Z u} \ltwo{Zu}   & \leq C h^{1+\tau} \ltwo{Zu}^2.
\end{align*}
Now apply the elliptic estimate Lemma \ref{ellipticlemma} to $Zu$ to see 
\begin{align*}
C h^{1+\tau} \ltwo{Zu}^2  \leq C h^{6 \tau} \ltwo{f}^2 + C h^{1+\tau +2 } \ltwo{u}^2.
\end{align*}
Combining this chain of inequalities gives \eqref{anticommutatorest}.
\end{proof}

So now use \eqref{anticommutatorest} to estimate the sum of double commutators in \eqref{awop}
\begin{align*}
h^{3-\gamma-\tau} |\<(AW+WA)u, u\>| \leq Ch^{3-\tau} \lp{f}{2} \lp{u}{2} + Ch^{2+6\tau-\gamma} \lp{f}{2}^2 + o(h^3)\lp{u}{2}^2,
\end{align*}
which is exactly the desired inequality \eqref{awest}.

\subsection{Commutator Estimate of $A$ and $h^2 \p_x^2$}\label{hdxsubsec}
The Laplacian commutator estimate follows from writing $[h^2 \p_x^2, A]$ as a sum of a cutoff version of $P$, $J=\Op(\chi^{1/2}(x) \psi^{1/2}(\xi h^{\tau-1}))$  and error terms. The error terms are supported in the elliptic or damped set and can be further estimated. 

In particular in this subsection I will show
\begin{align}\label{kommutatorest}
2 h^3\beta \lp{J u}{2}^2 &\leq \left|\<[h^2 \p_x^2, A] u, u\>\right|+ C h^{3-\tau}\lp{f}{2} \lp{u}{2}+C h^{2+6\tau-\gamma} \lp{f}{2}^2 \\
&+ (C h^{3+2\tau}\beta+o(h^3))  \lp{u}{2}^2. \nonumber
\end{align}
To begin note by Lemma \ref{exoticcompose}  
\begin{align*}
[h^2 \p_x^2, h^{1-\tau} A] &= \sum_{k=0}^{2} \frac{(ih)^k}{2^k k!} (1-(-1)^{k}) \Op( \p_x^k(x \chi(x)) \xi \psi(\xi h^{\tau-1}) \p_{\xi}^k \xi^2) \\
&= 2 i h \Op((x \chi' + \chi) \xi^2 \psi(\xi h^{1-\tau})),
\end{align*}
where there are no terms for $k \geq 3$ since $\p_{\xi}^k \xi^2 =0$ for $k \geq 3$, and the $k=0,2$ terms cancel because $1-(-1)^k=0$ then.

Now recall that $p=\xi^2 + i h^{2- \gamma} W - h^2 \beta$ so 
\begin{equation}\label{eqlowerbound}
2ih ( x \chi' + \chi) \xi^2 \psi(\xi h^{\tau-1}) =  2ih x \chi' \xi^2 \psi +  2ih \chi \psi(\xi h^{\tau-1}) (p - i h^{2-\gamma} W + h^2 \beta).
\end{equation}
Each of the terms on the right hand side will be estimated in turn. The $h^3 \beta$ term will produce the $h^3 J$, the remaining terms produce errors. 

\subsubsection{Estimate of $\Op(\xi^2 \psi x \chi')$}\label{awkwardmaincommutatorsubsection}
To estimate $\Op(\xi^2 \psi x \chi')$ it is enough to use that $\chi'$ is supported only inside the damped set and apply \eqref{hdampest}. In order to do so the $\xi$ dependency of the operator must be eliminated, to do so the approach of Lemma \ref{ellipticestawaw} is adapted. Lemma \ref{commutebump} is still used, but the localizing function now depends on $x,$ and $\xi^2 \psi x \chi'$ is smooth in $x$ so the error term is $O(h^{\infty})$. 

In particular, define $s \in \Ci(-\pi, \pi)$ with 
$$
s=\begin{cases}
0 \quad |x| < \sigma \\
1 \quad \sigma+\sigma_1/2 < |x|,
\end{cases}
$$
and let $S$ be the operator of multiplication by $s$. Note $\supp(\chi') \subset \{\sigma+\sigma_1/2<|x|\}$ so $s \equiv 1$ on $\supp(\xi^2 \psi x \chi')$. Then by Lemma \ref{commutebump} (since $\chi' \in \Ci$)
\begin{align*}
\left|\<\Op(\xi^2 \psi(\xi h^{\tau-1}) x \chi'(x))u ,u\>\right| &= \left|\<S\Op(\xi^2 \psi(\xi h^{\tau-1}) x \chi'(x))Su ,u\>\right| +O(h^{\infty}) \lp{u}{2}^2. 
\end{align*}
Use that $S$ is self adjoint and the H\"older inequality to write 
\begin{align*}
\left|\<S\Op(\xi^2 \psi(\xi h^{\tau-1}) x \chi'(x))Su ,u\>\right| &=\left| \<\Op(\xi^2 \psi(\xi h^{\tau-1}) x \chi'(x))Su ,Su\>\right|\\
&\leq \lp{\Op(\xi^2 \psi(\xi h^{\tau-1}) x \chi'(x)) Su}{2} \lp{Su}{2}.
\end{align*}
By Lemma \ref{calderonvaillancourt}, $h^{2\tau-2}\Op(\xi^2 \psi( \xi h^{\tau-1}) x \chi'(x))$ is bounded on $L^2$, so
\begin{align*}
\lp{\Op(\xi^2 \psi(\xi h^{\tau-1}) x \chi'(x)) Su}{2} \lp{Su}{2}&\leq C h^{2-2\tau} \lp{Su}{2}^2.
\end{align*}
Then since $s \leq C W^{1/2}$ and applying \eqref{hdampest}
\begin{align*}
h^{2- 2 \tau} \lp{Su}{2}^2 &\leq C h^{2-2\tau} \lp{W^{1/2}u}{2}^2 \leq C h^{2-2\tau+\gamma} \lpfu.
\end{align*}
Combining this chain of inequalities and multiplying both sides by $h$ gives 
\begin{equation}\label{eqlowerbound2}
\awkwardtermmaincommutatorlemmaeq
\end{equation}

\subsubsection{Estimate of $\Op(h^2 \beta \chi \psi)$}
To estimate $\Op(\chi \psi)$ write it as $J^2$ plus an error term. 
By Lemma \ref{exoticcompose}
\begin{align*}
\Op(J) \Op(J) = \Op(\chi^{1/2} \psi^{1/2}) \Op(\chi^{1/2} \psi^{1/2}) &=\Op(\chi \psi) - h^{2\tau} \Op(r_1),
\end{align*}
 where $r_1 \in S^0_{1-\tau}$.
Using that $J$ is self adjoint and $\Op(r_1)$ is bounded on $L^2$ by Lemma \ref{calderonvaillancourt}
\begin{align*}
\lp{Ju}{2}^2= \left|\<\Op(\chi^{1/2} \psi^{1/2}) \Op(\chi^{1/2} \psi^{1/2}) u, u\>\right| &\leq \left|\< \Op(\chi \psi)u, u\>\right| + h^{2\tau} \<\Op(r_1) u, u\> \\
& \leq \left|\< \Op(\chi \psi)u, u\>\right| +C h^{2\tau} \lp{u}{2}^2.
\end{align*}
Therefore, multiplying through by $h^3 \beta$
\begin{equation}\label{eqlowerbound3}
\boundbelowJeq
\end{equation}

\subsubsection{Estimate of h $\Op(\chi \psi p)$}\label{chipsipsubsection}
To estimate  $\Op(\chi \psi p)$, write it as $\Op(\chi \psi)P$ plus error terms. The error terms are supported on the elliptic set of $P$ or the damped region and are further estimated using Lemma \ref{ellipticlemma} or \eqref{hdampest} respectively. In particular the following inequality will be shown.
\begin{equation}\label{eqlowerbound4}
\chipsiPintofeq
\end{equation}
Note this term appears in \eqref{eqlowerbound} as $h \Op(\chi \psi) p$, but to simplify notation this extra factor of $h$ is not carried through the intermediate calculations. Because of this remainders of size $o(h^2)$ are acceptably small, instead of the $o(h^3)$ of other calculations. 

To begin, $\Op(\chi \psi) P$ is computed, where special care must be taken with the regularity of the $W$ terms. 
Since $W \in W^{k_0, \infty}$ by part 2 of Lemmas \ref{lowregcompose} and \ref{exoticcompose} (Lemma \ref{exoticcompose} is used to compose $\Op(\chi \psi)$ and $-h^2 \p_x^2$, as Lemma \ref{lowregcompose} requires symbols to be bounded). 
\begin{align*}
 \Op(\chi \psi)P &= \Op(\chi \psi) (-h^2 \p_x^2 + i h^{2-\gamma} W  - h^2 \beta) \\
&= \Op(\chi \psi p) + \sum_{k=1}^{k_0-6} \frac{(ih)^k}{2^k k!} \Op\left( (\p_y \p_{\xi} - \p_{x} \p_{\eta})^k \chi(x) \psi(\xi h^{1-\tau}) (\eta^2 + i h^{2-\gamma} W(y)) \bigg|_{y=x, \eta=\xi} \right) \\
&+ \Olt(h^{(k_0-5)\tau -  5 (1-\tau)}).
\end{align*}
As in Lemma \ref{doublecommutator} since $\tau \in [\tm,1]$ with $\tm> \frac{7}{k_0-1}$
$$
(k_0-5) \tau - 5(1-\tau) = k_0 \tau  - 5 > \frac{7}{k_0-1} k_0 -5 > 2,
$$
which guarantees that the remainder term is of size $o(h^2)$. 

The sum splits into two separate sums, where the first is only taken to $k=2$ because $\p_{\xi}^k \xi^2 =0$ for $k \geq 3$.
\begin{align}\label{chipsimaineq}
\Op(\chi \psi) P &=\Op(\chi \psi p) + \sum_{k=1}^{2} \frac{(ih)^k}{2^k k!} (-1)^k \Op\left(\p_x^k \chi(x) \psi(\xi h^{\tau-1}) \p_{\xi}^k \xi^2 \right) \nonumber \\
&+ i h^{2-\gamma} \sum_{k=1}^{k_0-6} \frac{(ih)^k}{2^k k!} \Op(\chi(x) \p_{\xi}^k \psi(\xi h^{1-\tau}) \p_x^k W)+ \olt(h^2)\nonumber \\
&= \Op(\chi \psi p) - \Op(i h\chi' \psi \xi+\frac{h^2}{4} \chi'' \psi) + i h^{2-\gamma} \sum_{k=1}^{k_0-6} \frac{i^k}{2^k k!} h^{\tau k}\Op(\chi \psi^{(k)} \p_x^k W) + \olt(h^{2}).
\end{align}
The two operators and the sum will each be estimated individually. The $\Op(\chi \psi)P$ term is straightforward to control. The second term is supported inside the damped set and is controlled as in subsection \ref{awkwardmaincommutatorsubsection}. The sum will be controlled by the elliptic estimate using the same argument as Lemma \ref{ellipticestawaw}.

To begin, using the boundedness of $\Op(\chi \psi)$ on $L^2$ and that $Pu=h^2 f$ 
\begin{equation}\label{chipsieasy}
|\<\Op(\chi \psi) P u, u\>| \leq C h^2 \lp{f}{2} \lp{u}{2}.
\end{equation}

For the second term, set $g(x,\xi)= i h \chi' \psi \xi + \frac{h^2}{4} \chi'' \psi$. Note that $\chi'$ and $\chi''$ are supported inside the damping set and so an argument as in subsection \ref{awkwardmaincommutatorsubsection} will give an improvement. Recall $s \in \Ci(-\pi, \pi)$
$$
s(x) = \begin{cases} 
0 \quad |x| < \sigma\\
1 \quad \sigma+\sigma_1/2 <|x|,
\end{cases}
$$
and $S$ is the operator of multiplication by $s$. Since $g \in \Ci$ and $s \equiv 1$ on $\supp g$, by Lemma \ref{commutebump}  
\begin{align*}
\left|\<\Op(g) u,u \>\right| &= \left|\<S \Op(g) S u,u\>\right| + O(h^{\infty}) \lp{u}{2}^2
\end{align*}
Using that $S$ is self adjoint, along with the H\"older inequality
$$
\left|\<S \Op(g) S u,u\>\right|  \leq \ltwo{\Op(g) Su} \ltwo{Su}.
$$
Now by Lemma \ref{calderonvaillancourt}, $h^{\tau-2} \Op(g)$ is bounded on $L^2$, so
$$
\ltwo{\Op(g) Su} \ltwo{Su} \leq h^{2-\tau} \ltwo{Su}^2. 
$$
Then since $s \leq C W^{1/2}$ and applying \eqref{hdampest}
$$
h^{2-\tau} \ltwo{Su}^2 \leq C h^{2-\tau} \ltwo{W^{1/2} u}^2 \leq  C h^{2-\tau + \gamma} \lpfu. 
$$
Combining this chain of inequalities gives 
\begin{equation}\label{btwoest}
\left|\<\Op(i h\chi' \psi \xi + \frac{h^2}{4} \chi'' \psi) u,u \>\right| \leq C h^{2-\tau + \gamma} \lpfu + O(h^{\infty}) \lp{u}{2}^2.
\end{equation}

Now to estimate the sum, note that $\chi \psi^{(k)} \p_x^k W$ is supported in $\{2 h^{1-\tau} < |\xi| < 3 h^{1-\tau}  \}$ which is contained in the elliptic set. The proof of Lemma \ref{ellipticestawaw} will be imitated. Conjugate the $\chi \psi^{(k)} \p_x^k W$ terms in the sum in \eqref{chipsimaineq} by $Z$  to take advantage of the location of their support. Once again care is taken with the regularity of $\p_k W$ when applying Lemma \ref{commutebump}. 

Set $\wtbk(x,\xi) = \chi(x) \psi^{(k)}(\xi h^{1-\tau}) \p_x^k W(x)$. Recall $Z$ from Lemma \ref{ellipticlemma}. Since $z \equiv 1$ on $\supp(\wtbk)$ and $\p_x^k W \in W^{k_0-k, \infty},$ Lemma \ref{commutebump} can be applied with $N=k_0-k$ 
$$
\Op(\wtbk) = Z \Op(\wtbk) Z + \Olt(h^{(k_0-k)\tau -5}).
$$
So conjugating $\Op(\wtbk)$ by $Z$ is $\Op(\wtbk)$ modulo an error term. Once again terms with larger $k$ have less regularity and have larger error terms. However, as before, reintroducing the $h^{\tau k}$ from the sum improves the error terms 
$$
h^{\tau k} \Op(\wtbk) =  h^{\tau k} Z \Op(\wtbk)Z + h^{\tau k} \Olt(h^{(k_0-k)\tau -5}).
$$
In particular, and as in Lemma \ref{ellipticestawaw}, the error term is $o(h^2)$ because $\tau \geq \tm > \frac{7}{k_0-1}$ and
$$
\tau k + (k_0 - k) \tau - 5 = k_0 \tau -5  > \left(\frac{7}{k_0-1}\right)k_0 -5 >2.
$$
So 
$$
\left|\<h^{\tau k} \Op(\wtbk) u, u \>\right| \leq h^{\tau k}  \left|\<Z \Op(\wtbk) Z u, u\>\right| + o(h^2) \ltwo{u}^2.
$$
Continuing to follow the proof of Lemma \ref{ellipticestawaw}, use the self adjointness of $Z$ and the H\"older inequality to write 
$$
h^{\tau k}  \left|\<Z \Op(\wtbk) Zu, u\>\right| \leq  h^{\tau k} \ltwo{ \Op(\wtbk) Zu } \ltwo{Zu}.
$$
Now by Lemma \ref{calderonvaillancourt}, $\Op(\wtbk)$ is bounded on $L^2$
$$
h^{\tau k} \ltwo{ \Op(\wtbk) Zu } \ltwo{Zu} \leq h^{\tau k} \ltwo{Zu}^2. 
$$
Then apply the elliptic estimate, Lemma \ref{ellipticlemma}, to $Zu$
$$
h^{\tau k} \ltwo{Zu}^2 \leq C h^{5 \tau + k \tau-1} \ltwo{f}^2 + o(h^2) \ltwo{u}^2. 
$$
Combining this chain of inequalities and multiplying both sides by $h^{2-\gamma}$ gives 
\begin{equation}\label{boneest}
h^{2-\gamma} \left|\<h^{\tau k} \Op(\wtbk) u,u\>\right|\leq C h^{1-\gamma+(5+k)\tau} \lp{f}{2}^2  + o(h^{2}) \lp{u}{2}^2 \leq C h^{1-\gamma+6 \tau} \lp{f}{2}^2  + o(h^{2}) \lp{u}{2}^2.
\end{equation}
Where the second inequality follows since $k \geq 1$. 

Therefore using \eqref{chipsieasy}, \eqref{btwoest} and  \eqref{boneest} to estimate terms in \eqref{chipsimaineq}
$$
|\<\Op(\chi \psi p) u ,u\>| \leq C h^2 \lp{f}{2} \lp{u}{2} +C h^{2-\tau + \gamma} \lpfu +  Ch^{1-\gamma+6\tau} \lp{f}{2}^2  + o(h^{2}) \lp{u}{2}^2.
$$
Multiplying both sides by $h$ and using that $\gamma -\tau \geq0$ (since $\tau \in (1/2,1]$ and $\gamma \in \{1,2\}$) gives the desired inequality
\begin{equation*}
\chipsiPintofeq
\end{equation*}

\subsubsection{Estimate of $\Op(\chi\psi W)$}
To estimate $\Op(\chi\psi W)$ I write it as $v_j \Op(\chi \psi) v_j$ plus error terms. The $v_j \Op(\chi \psi) v_j$ terms are controlled by the damped region estimate \eqref{hdampest}. The error terms are either small or are supported on the elliptic set of $P$ and can be further estimated using Lemma \ref{ellipticlemma}. In particular the following inequality will be shown 
\begin{equation}\label{eqlowerbound5}
\dampingmaincommutatorlemmaeq
\end{equation}
Note this term appears in \eqref{eqlowerbound} as $h^{2-\gamma} h \Op(\chi \psi W)$, but to simplify notation these extra factors of $h$ are not carried through the intermediate calculations. Because of this remainders of size $o(h^2)$ are acceptably small, instead of the $o(h^3)$ of other calculations. 

To begin recall that $W = \sum v_j^2$ and so 
$$
\Op(\chi \psi W) = \Op\left(\chi \psi \sum v_j^2\right) = \sum \Op(\chi \psi v_j^2).
$$
This is exactly the principal symbol of $\sum v_j \Op(\chi \psi) v_j$, an expansion of which will now be computed.

First, since $v_j \in W^{k_0, \infty}$ apply part 2 of Lemma \ref{lowregcompose} with $\ti{N}=k_0-5$, to obtain
$$
\Op(\chi \psi) v_j = \sum_{k=0}^{k_0-6} \frac{(ih)^k}{2^k k!} \Op(\chi \p_{\xi}^k \psi(\xi h^{\tau-1}) \p_x^k v_j ) + \Olt(h^{(k_0-5) \tau - 5(1-\tau)}). 
$$
As in subsection \ref{chipsipsubsection}, since $\tau \geq \tm >\frac{7}{k_0-1}$ the remainder term is $o(h^2)$. In particular 
$$
(k_0-5) \tau - 5(1-\tau) = k_0 \tau - 5  > \left( \frac{7}{k_0-1} \right) k_0 -5 > 2. 
$$
Replacing $h^k \p_{\xi}^k \psi(\xi h^{\tau-1}) = h^{k\tau} \psi^{(k)}(\xi h^{\tau-1})$ gives
$$
\Op(\chi \psi) v_j = \sum_{k=0}^{k_0-6} \frac{i^k h^{\tau k}}{2^k k!} \Op(\chi \psi^{(k)} \p_x^k v_j ) + \olt(h^2). 
$$
Now compute the following composition of $v_j$ and $\Op(\chi \psi^{(k)} \p_x^k v_j)$, adjusting the number of terms taken in the expansion based on how many derivatives fallen on $\p_x^k v_j$. 
\begin{equation}\label{vjcomposum}
v_j \Op(\chi \psi) v_j = v_j \sum_{k=0}^{k_0-6} \frac{i^k}{2^k k!} h^{\tau k} \Op(\chi \psi^{(k)} \p_x^k v_j) + \olt(h^2).
\end{equation} 

In particular, since $v_j \in W^{k_0, \infty}$ and $\chi \p_x^k v_j \psi^{(k)} \in W^{k_0-k} S_{1-\tau}$, apply part 1 of Lemma \ref{lowregcompose} with $\ti{N}=k_0-5-k$ and $k_0-k \geq 5$. 
$$
\frac{i^k}{2^k k!} v_j h^{\tau k} \Op(\p_x^k v_j \psi^{(k)} \chi)=\sum_{l=0}^{k_0-k-6} \frac{(ih)^l}{2^l l!} \frac{i^k}{2^k k!} h^{\tau k} (-1)^l \Op(\chi \p_x^l v_j \p_x^k v_j \p_{\xi}^l \psi^{(k)}) + h^{\tau k} \Olt(h^{(k_0-k-5) \tau - 5(1-\tau)}).
$$

Although there are fewer terms taken in the expansion for larger values of $k$, the additional $h^{\tau k}$ ensures that the remainder term is $o(h^2)$. In particular  
$$
\tau k + (k_0-k-5)\tau - 5(1-\tau) =  k_0 \tau -5 >k_0  \left(\frac{7}{k_0-1}\right) -5 > 2.
$$
Therefore
$$
\frac{i^k}{2^k k!} v_j h^{\tau k} \Op(\p_x^k v_j \psi^{(k)} \chi)=\sum_{l=0}^{k_0-k-6} \frac{ h^{\tau(l+k)}}{2^{l+k} l! k!} i^{k+l} (-1)^l \Op(\p_x^l v_j \p_x^k v_j \psi^{(k+l)} \chi) + \olt(h^2).
$$
Now plug this into \eqref{vjcomposum} to obtain
$$
v_j \Op(\chi \psi) v_j =  \sum_{k=0}^{k_0-6} \left(\sum_{l=0}^{k_0-k-6} \frac{i^{(k+l)}}{2^{l+k} l! k!} h^{\tau(l+k)} (-1)^l \Op(\p_x^l v_j \p_x^k v_j \psi^{(k+l)} \chi) \right)+ \olt(h^2).
$$
Note that the $k=0, l=1$ term and $k=1, l=0$ term are identical except for a minus sign and cancel. There are more cancellations which occur in the sum, but only this first one is necessary for the proof. Note also that $k+l \leq k+ k_0-k-6 = k_0-6$,
\begin{equation}\label{vjchipsivjcompose}
v_j \Op(\chi \psi) v_j =  \Op(v_j^2 \chi \psi) + \sum_{k,l=1}^{k+l \leq k_0-6} \frac{i^{(k+l)}}{2^{l+k} l! k!} h^{\tau(l+k)} (-1)^l \Op(\p_x^l v_j \p_x^k v_j \psi^{(k+l)} \chi) + \olt(h^2).
\end{equation}
In order to further control the size of the terms in this sum the technique from Lemma \ref{ellipticestawaw} is used. Let $\bklt=\p_x^k v_j \p_x^l v_j \psi^{(k+l)} \chi$. For $k, l \geq 1, \bklt$ has support contained in the elliptic set which can be made use of byy conjugating by $Z$ as in Lemma \ref{ellipticestawaw} and then applying the elliptic estimate to $Zu$. The proof is almost identical to Lemma \ref{ellipticestawaw}, but is written here for exactness. 

As in Lemma \ref{ellipticestawaw}, recall $Z$ from Lemma \ref{ellipticlemma}. Note $z \equiv 1$ on $\supp \bklt$ and $\bklt \in W^{k_0-\max(k,l)}S_{1-\tau}(\Ps)$, so Lemma \ref{commutebump} with $N=k_0-\max(k,l)$ gives
\begin{align*}
\Op(\bklt) &= Z \Op(\bklt) Z +  \Olt(h^{\tau (k_0-\max(k,l))-5}).
\end{align*}
Note that there is less regularity for larger $l,k$ and so the error term is larger. However after reintroducing the $h^{(k+l) \tau}$ from the sum the error terms are improved
\begin{align*}
h^{(k+l)\tau} \Op(\bkl) &= h^{(k+l)\tau}  Z \Op(\bkl) Z + \Olt(h^{(k_0-\max(k,l))\tau-5} h^{\tau(k+l)}).\nonumber
\end{align*}
In particular since $\tau \geq \tm > \frac{7}{k_0-1}$ and $k, l \geq 1$ 
$$
(k_0-\max(k,l))\tau - 5 + \tau(k+l)  = (k_0 + \min(k,l)) \tau -5 > k_0 \left(\frac{7}{k_0-1} \right) -5 > 2.
$$
Therefore 
\begin{align}\label{ellipticvj}
h^{(k+l)\tau} \Op(\bkl) &=h^{(k+l)\tau}  Z \Op(\bkl) Z+\olt(h^{2}).
\end{align}
Now, apply \eqref{ellipticvj} term by term to $\tib=\sum_{k,l=1}^{k+l \leq k_0-6} C_{k,l} h^{\tau(l+k)} \bklt$, the sum in \eqref{vjchipsivjcompose}
\begin{align*}
|\<\Op(\ti{b}) u, u\>| &\leq |\<Z \Op(\ti{b}) Zu, u\>| + o(h^2) \ltwo{u}^2. 
\end{align*}
Then use the self-adjointness of $Z$ and the H\"older inequality to write 
\begin{align*}
|\<\Op(\ti{b}) u, u\>| \leq |\<Z \Op(\ti(b)) Zu, u\>|  &\leq \ltwo{\Op(\ti(b))Z u} \ltwo{Zu}.
\end{align*}
Now note, since $\bklt$ is bounded on $L^2$ by Lemma \ref{calderonvaillancourt}, and $k,l \geq 1,$ 
$$
h^{(k+l) \tau} \Op(\bklt)=\Olt(h^{2 \tau}),
$$ 
so $\Op(\tib)= \Olt(h^{2 \tau})$ and 
\begin{align*}
|\<\Op(\ti{b}) u, u\>| \leq \ltwo{\Op(\ti(b)) Z u} \ltwo{Zu}   & \leq C h^{2\tau } \ltwo{Zu}^2.
\end{align*}
Now apply the elliptic estimate Lemma \ref{ellipticlemma} to $Zu$ to see 
\begin{align}\label{tibestimate}
|\<\Op(\tib) u, u\>| \leq C h^{2 \tau } \ltwo{Zu}^2  \leq C h^{7 \tau-1} \ltwo{f}^2 + C h^{2+2 \tau } \ltwo{u}^2.
\end{align}
Now these pieces will be combined to give the final estimate of $\Op(\chi \psi W)$. Recall that $W = \sum_j v_j^2$ so 
$$
|\< \Op(\chi \psi W) u, u\>| \leq \sum_j |\<\Op(\chi \psi v_j^2)u, u\>|.
$$
The composition computation \eqref{vjchipsivjcompose} gives 
$$
\sum_j |\<\Op(\chi \psi v_j^2)u, u\>| \leq \sum_j |\<v_j \Op(\chi \psi) v_j u, u\>|  + |\<\Op(\ti{b})u,u\>|  +o(h^2) \lp{u}{2}^2.
$$
Then \eqref{tibestimate} gives 
\begin{equation}\label{chipsiwintermediateeq}
\sum_j |\<\Op(\chi \psi v_j^2)u, u\>| \leq \sum_j |\<v_j \Op(\chi \psi) v_j u, u\>|  + C h^{7 \tau-1} \ltwo{f}^2 +o(h^2) \lp{u}{2}^2.
\end{equation}
It remains to control the $v_j \Op(\chi \psi) v_j$ terms with the damping region estimate. Using that $v_j$ is a multiplier and thus is self-adjoint, as well as the H\"older inequality 
$$
|\<v_j \Op(\chi \psi) v_j u, u\>| \leq \ltwo{\Op(\chi \psi) v_j u} \ltwo{v_j u}.
$$
Now note $\Op(\chi \psi)$ is bounded on $L^2$ by Lemma \ref{calderonvaillancourt} so 
$$
\ltwo{\Op(\chi \psi) v_j u} \ltwo{v_j u} \leq C \ltwo{v_j u}^2. 
$$
Again using that $W= \sum v_j^2$ so $v_j^2 \leq W \leq C W^{1/2}$ and \eqref{hdampest}
\begin{align*}
 \ltwo{v_j u}^2 \leq C \ltwo{W^{1/2} u}^2 \leq C h^{\gamma}\lpfu. 
\end{align*}
Combining this chain of inequalities and \eqref{chipsiwintermediateeq} gives 
$$
|\<\Op(\chi \psi W)\>| \leq C h^{\gamma} \lpfu + h^{7 \tau-1} \ltwo{f}^2 + o(h^2) \ltwo{u}^2. 
$$
Finally multiply both sides by $h^{3-\gamma}$ to obtain the desired inequality
\begin{equation*}
\dampingmaincommutatorlemmaeq
\end{equation*}

\subsubsection{Combining Estimates}
Now use \eqref{eqlowerbound2}, \eqref{eqlowerbound3}, \eqref{eqlowerbound4} and \eqref{eqlowerbound5} to estimate terms in \eqref{eqlowerbound}  
\begin{align*}
2 h^3\beta \lp{J u}{2}^2 &\leq  h^{1-\tau} \left|\<[h^2 \p_x^2, A] u, u\>\right|
+ C (h^3 +h^{3-2\tau+\gamma}) \lp{f}{2} \lp{u}{2}\\
&+C h^{2+6\tau-\gamma} \lp{f}{2}^2 + (C h^{3+2\tau}\beta+o(h^3))  \lp{u}{2}^2. 
\end{align*}
Use that $\gamma -\tau >0$ to group the $\lpfu$ terms to obtain the desired estimate \eqref{kommutatorest}
\begin{align*}
2 h^3\beta \lp{J u}{2}^2 &\leq  h^{1-\tau} \left|\<[h^2 \p_x^2, A] u, u\>\right|+ C h^{3-\tau}\lp{f}{2} \lp{u}{2}+C h^{2+6\tau-\gamma} \lp{f}{2}^2 \\
&+ (C h^{3+2\tau}\beta+o(h^3))  \lp{u}{2}^2. \nonumber
\end{align*}

\subsection{End of Proof of Lemma \ref{mainestimate}}\label{synthesis}

Recall \eqref{startest} is
\begin{align*}
2 h^{3-\tau} Im \<f, Au\> &= h^{1-\tau} \<[h^2\p_x^2, A] u,u\> + h^{3-\gamma-\tau} \<(AW+WA)u,u\>.
\end{align*}
Now apply \eqref{awest}, \eqref{kommutatorest}, to estimate the terms on the right hand side, and Lemma \ref{calderonvaillancourt} (to see that $\lp{Au}{2} \leq C\lp{u}{2}$) 
\begin{align*}
2 h^3 \beta \lp{Ju}{2}^2 & \leq Ch^{3-\tau} \lp{f}{2}\lp{u}{2} + C h^{2+6\tau-\gamma} \lp{f}{2}^2 + C\left(o(h^3)+ \beta h^{3+2\tau}\right) \lp{u}{2}^2.
\end{align*}
Divide through by $2 h^3 \beta$ to obtain the desired estimate, which can be done since $\beta$ is bounded away from 0. 
\end{proof}

\appendix
\section{Pseudodifferential Operators}\label{pseudosection}
This appendix contains the necessary background information on pseudodifferential operators, as well as a lemma calculating the size of errors from introducing cutoff operators and a careful calculation of the regularity required to have remainder terms in composition expansions bounded on $L^2$. 

This paper uses the semiclassical Weyl quantization, which takes in a function on $T^* \Rb$ and produces an operator $\Op(a)$ defined by 
\begin{equation}
\Op(a) u(x) = \frac{1}{2\pi h} \intr \intr e^{\frac{i(x-y)\xi}{h}} a\left(\frac{x+y}{2}, \xi\right)u(y) dy d\xi.
\end{equation}
On the torus this formula still makes sense. A function $a \in \Ci(\Ps)$ is equivalent to $a \in \Ci(\Rb_x \times \Rb_{\xi})$ periodic in the $x$ variable. It is straightforward to see that for such $a, \Op(a)$ preserves the space of $2\pi \Zb$ periodic distributions on $\Rb$ and thus preserves $\Dc'(\mathbb{S}^1)$. 
\begin{definition} \label{hdepsymbol}
 $a(x,\xi; h) \in \Smr$  if $a \in \Ci(\Ps)$ and satisfies
\begin{equation}
\sup_{x,\xi} |\p_{x}^{\alpha} \p_{\xi}^{\theta} a(x, \xi; h)| \leq C_{\alpha \theta} h^{-\rho|\theta|} \<\xi\>^{m-|\theta|}.
\end{equation}
\end{definition}
Note that this definition is not the typical one for $h$ dependent symbols. In particular only derivatives in $\xi$ produce unfavorable powers of $h$, derivatives in $x$ do not produce any. This structure would allow $\rho \geq 1/2$ (see \cite{DyatlovZahl} section 3) corresponding to $\tau<1/2$ which would give an improved decay rate, however requirements of the elliptic estimate (Proposition \ref{ellipticlemma}) prevent $\rho$ from being taken this large. 

The following lemma gives the standard composition and adjoint formula for $\Smr$ symbols. It follows from Theorems 4.17 and 4.18 of \cite{Zworski2012}. 
\begin{lemma}\label{exoticcompose}
Let $a \in \Smr, b \in \Smrp$ then
\begin{enumerate}
	\item $\Op(a) \Op(b)=\Op(a\#b)$ where $a\# b \in \Smrpp$ and for each $N$ 
	\begin{align}
	a\#b(x,y; h) &= \sum_{k=0}^{N-1} \frac{(ih)^k}{2^k k!} \left( \p_y \p_{\xi} - \p_x \p_{\eta}\right)^k (a(x,\xi;h) b(y,\eta;h))\bigg|_{y=x, \eta=\xi} + O_{\Smrpp}(h^{(N(1-\rho)}).
	\end{align}
	\item $\Op(a)^* = \Op(\bar{a}),$ in particular real symbols have self-adjoint Weyl quantization.
\end{enumerate}
\end{lemma}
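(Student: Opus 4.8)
The plan is to deduce this from the standard semiclassical Weyl calculus, since the only novelty here is the anisotropy of Definition \ref{hdepsymbol}, in which momentum derivatives cost a factor $h^{-\rho}$ but position derivatives are free. Part 2 is immediate: the Schwartz kernel of $\Op(a)$ is $\tfrac{1}{2\pi h}\int e^{i(x-y)\xi/h}\,a(\tfrac{x+y}{2},\xi)\,d\xi$, so transposing and complex conjugating simply swaps $x$ and $y$, which fixes the midpoint $\tfrac{x+y}{2}$; hence $\Op(a)^*=\Op(\bar a)$, and real symbols have self-adjoint quantizations. For part 1 I would start from the exact identity, valid for Schwartz $a,b$,
$$
\Op(a)\Op(b)=\Op(a\#b),\qquad a\#b=\exp\!\Big(\tfrac{ih}{2}\big(\p_y\p_\xi-\p_x\p_\eta\big)\Big)\big(a(x,\xi;h)\,b(y,\eta;h)\big)\Big|_{y=x,\ \eta=\xi},
$$
understood as an oscillatory integral (Theorem 4.17 of \cite{Zworski2012}), and then Taylor-expand the exponential to order $N$ with integral remainder.

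The first point to verify is that the $k$-th term of the resulting finite sum lies in $\Smrpp$ and has size $h^{k(1-\rho)}$. Expanding $(\p_y\p_\xi-\p_x\p_\eta)^k$ binomially, a typical summand applies $\p_\xi^j\p_x^{k-j}$ to $a$ and $\p_y^j\p_\eta^{k-j}$ to $b$, so altogether there are exactly $k$ momentum derivatives (each contributing one factor $h^{-\rho}$ and lowering the combined $\xi$-order by one) and $k$ position derivatives (costing nothing); together with the prefactor $(ih)^k/(2^kk!)$ this yields a symbol bounded by $Ch^{k(1-\rho)}\langle\xi\rangle^{m+m'-k}$ whose further derivatives obey the $\Smrpp$ estimates. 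It is precisely this anisotropy that makes each successive order cost only $h^{1-\rho}$ rather than the usual $h^{1-2\rho}$, so the expansion stays asymptotic for every $\rho<1$, consistent with the remark following Definition \ref{hdepsymbol}.

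The remaining, and main, point is to estimate the remainder in $\Smrpp$ by $O(h^{N(1-\rho)})$, which reduces to the uniform-in-$t$ boundedness on the symbol class of the twisted flow $\exp\!\big(\tfrac{ith}{2}(\p_y\p_\xi-\p_x\p_\eta)\big)$, $t\in[0,1]$, proved by writing it as an oscillatory integral with phase the symplectic pairing of the doubled variables and integrating by parts. I expect this to be the delicate step: the twist pairs a ``cheap'' $\xi$ (or $\eta$) derivative on one factor with a ``free'' $y$ (or $x$) derivative on the other, so it does not manifestly respect the anisotropic weights; but in the integration by parts the number of momentum derivatives generated is again exactly matched by powers of $h$, so for $\rho<1$ the argument of \cite{Zworski2012} carries over once the isotropic symbol estimates there are replaced by those of Definition \ref{hdepsymbol}. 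Equivalently, one can observe that $\Smr$ is the symbol class for the Hörmander metric on $\Ps$ with $|dx|^2$ unweighted and $|d\xi|^2$ rescaled by $h^{-2\rho}\langle\xi\rangle^{-2}$ and order function $\langle\xi\rangle^m$, and invoke Theorems 4.17 and 4.18 of \cite{Zworski2012} in that generality.
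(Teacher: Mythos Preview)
Your proposal is correct and aligns with the paper's treatment: the paper does not give a proof at all, merely stating that the lemma ``follows from Theorems 4.17 and 4.18 of \cite{Zworski2012}.'' Your sketch supplies the details of why the anisotropic class $\Smr$ fits into that framework (in particular the observation that each application of $\p_y\p_\xi-\p_x\p_\eta$ costs only $h^{-\rho}$ rather than $h^{-2\rho}$), and your closing remark that one can invoke those theorems directly via the appropriate H\"ormander metric is exactly the paper's approach.
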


The following two definitions are finite regularity analogs of Definition \ref{hdepsymbol}. In particular they define two different symbol classes with a finite number of derivatives in $x$ and an infinite number of derivatives in $\xi$. The first only produces unfavorable powers of $h$ when differentiated in $\xi$ while the second produces unfavorable powers of $h$ when differentiated in $\xi$ and $x$. The notation is again somewhat unusual but is made this way to mirror Definition \ref{hdepsymbol}. 
\begin{definition}\label{lowregsymboldef}
A distribution $a \in W^k S_{\rho}(\Ps)$ if for $\alpha \leq k, \theta \in \N$ 
\begin{align*}
\sup_{x, \xi} |\p_x^{\alpha} \p_{\xi}^{\theta} a| \leq C h^{-\rho \theta} \<\xi\>^{-\theta}.
\end{align*}
A distribution $a \in W^k S_{\rho, \rho}(\Ps)$ if for $\alpha \leq k, \theta \in \N$
\begin{align*}
\sup_{x, \xi} |\p_x^{\alpha} \p_{\xi}^{\theta} a| \leq C h^{-\rho (\alpha+\theta)} \<\xi\>^{-\theta}.
\end{align*}
\end{definition}

The following theorem gives a sufficient condition for a pseudodifferential operator to be bounded on $L^2$. It follows immediately from Theorem 1.2 of \cite{Boulkhemair1999}.
\begin{lemma}\label{calderonvaillancourt}
There exists $C>0$ such that for all $b(x,\xi) \in \Sc'(\Ps)$ 
$$
\nm{\Op(b)}_{\Ls(L^2(\Sb^1))} \leq C \sum_{\alpha, \theta \in \{0,1\}} h^{\theta} \lp{\p_x^{\alpha} \p_{\xi}^{\theta} b}{\infty}.
$$
In particular if $b \in W^1 S_{\rho}(\Ps)$ then $\Op(b)$ is bounded on $L^2$.
\end{lemma}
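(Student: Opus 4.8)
The plan is to obtain the estimate from the ordinary ($h=1$) Calder\'on--Vaillancourt bound on the line, namely the one-dimensional case of Theorem 1.2 of \cite{Boulkhemair1999} --- which asserts $\nm{\Opw(c)}_{\Ls(L^2(\Rb))}\leq C\sum_{\alpha,\theta\in\{0,1\}}\lp{\p_x^{\alpha}\p_{\xi}^{\theta} c}{\infty}$ for symbols $c$ on $T^*\Rb$, with $\Opw$ the unweighted Weyl quantization --- by two reductions: a dilation in the frequency variable that reinstates the semiclassical parameter, and a passage from $\Rb$ to $\Sb^1$. The assertion for $W^1 S_{\rho}(\Ps)$ symbols will then be a one-line consequence.

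For the rescaling I would use that the $h$-Weyl quantization is conjugate to the $h=1$ one by the dilation $\xi\mapsto h\xi$: substituting $\xi=h\eta$ in the defining oscillatory integral of $\Op(b)$ turns the phase $(x-y)\xi/h$ into $(x-y)\eta$ and the prefactor $(2\pi h)^{-1}\,d\xi$ into $(2\pi)^{-1}\,d\eta$, so that $\Op(b)=\Opw(b_h)$ with $b_h(x,\eta):=b(x,h\eta)$, as operators on $\Sc'(\Rb)$. Since $\p_{\eta}^{\theta}(b_h)(x,\eta)=h^{\theta}(\p_{\xi}^{\theta} b)(x,h\eta)$, the $x$-derivatives are untouched, and $\eta\mapsto h\eta$ is a bijection of $\Rb$, one gets $\lp{\p_x^{\alpha}\p_{\eta}^{\theta} b_h}{\infty}=h^{\theta}\lp{\p_x^{\alpha}\p_{\xi}^{\theta} b}{\infty}$ for all $\alpha,\theta$. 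Applying the $h=1$ bound to $b_h$ then yields exactly $\nm{\Op(b)}_{\Ls(L^2(\Rb))}\leq C\sum_{\alpha,\theta\in\{0,1\}}h^{\theta}\lp{\p_x^{\alpha}\p_{\xi}^{\theta} b}{\infty}$, with the same constant.

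It remains to transfer this from $\Rb$ to $\Sb^1$. Here I would use that the Calder\'on--Vaillancourt estimate is local --- its constant depends only on finitely many $L^{\infty}$ bounds of derivatives of the symbol, and these are by definition unchanged whether $b$ is viewed as a symbol on $\Ps$ or as a $2\pi$-periodic symbol on $T^*\Rb$ --- so that, by the standard localization principle (a partition of unity on $\Sb^1$ together with the fact that $\Op(b)$ on $2\pi$-periodic distributions is the restriction of the operator on $\Rb$ with the same periodic symbol), the estimate descends to $L^2(\Sb^1)$ with no change in the seminorms that appear. I expect this periodization bookkeeping --- routine, but the only genuinely non-automatic point --- to be the main obstacle; everything else is a change of variables or a direct quotation. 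Finally, if $b\in W^1 S_{\rho}(\Ps)$ then, by Definition \ref{lowregsymboldef}, $\lp{\p_x^{\alpha}\p_{\xi}^{\theta} b}{\infty}\leq C h^{-\rho\theta}\sup_{\xi}\<\xi\>^{-\theta}\leq Ch^{-\rho\theta}$ for $\alpha,\theta\in\{0,1\}$, whence $h^{\theta}\lp{\p_x^{\alpha}\p_{\xi}^{\theta} b}{\infty}\leq Ch^{\theta(1-\rho)}\leq C$ since $0<h\leq 1$ and $\rho\leq 1$; summing the four terms gives $\nm{\Op(b)}_{\Ls(L^2(\Sb^1))}\leq C$, as claimed.
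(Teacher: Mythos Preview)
Your approach is essentially the same as the paper's: rescale $\xi\mapsto h\eta$ to reduce the semiclassical quantization to the ordinary one, then invoke Theorem~1.2 of \cite{Boulkhemair1999}. The paper's proof is terser---it simply writes $\Op(b)=\Op(b(\cdot,h\eta))$ after the change of variables and applies Boulkhemair directly without discussing the passage from $\Rb$ to $\Sb^1$, and it does not spell out the $W^1 S_{\rho}$ consequence---so your additional remarks on periodization and the final symbol-class check are more detailed than what the paper records, but the core argument is identical.
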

\begin{proof}
So
\begin{align*}
\Op(b) &= (2\pi h)^{-1} \int_{\Rb \times \Rb} e^{\frac{i(x-y)\xi}{h}} b\left(\frac{x+y}{2}, \xi\right) v(y) dy d \xi \\
&= (2\pi)^{-1} \int_{\Rb \times \Rb} e^{i(x-y) \eta} b\left( \frac{x+y}{2}, \eta h\right) v(y) dy d \eta =\Op(b(\cdot, h \eta))
\end{align*}
which by \cite{Boulkhemair1999} Theorem 1.2 has  
$$
\nm{\Op(b)}_{\Ls(L^2(\Sb^1))} \leq C \sum_{\alpha, \theta \in \{0,1\}} \lp{\p_x^{\alpha} \p_{\eta}^{\theta} b(x,h \eta)}{\infty} \leq C \sum_{\alpha, \theta \in \{0,1\}} h^{\theta} \lp{\p_x^{\alpha} \p_{\xi}^{\theta} b}{\infty}.
$$
\end{proof}

In order to prove composition results for finite regularity symbols I will make use of the notation and results of \cite{SjostrandWiener}
\begin{definition}
Let $e_1, \ldots, e_m$ be a basis in $\Rn$ and $\Gamma=\bigoplus_1^n \Zb e_j$. Then let $\chi_0 \in \Sc(\Rn)$ be such that $1=\sum_{j\in \Gamma} \chi_j(x)$ where $\chi_j(x) =\chi_0(x-j)$ for $j \in \Gamma$. Define $S_w$ as the space of $u \in \Sc'(\Rn)$ such that 
$$
U(\xi) = \sup_{j \in \Gamma} |\F \chi_j u(\xi)| \in L^1(\Rn).
$$
Then $S_w$ is a Banach space with the norm
$$
\|u\|_{\Gamma,\chi_0} = \lp{\sup_{j \in \Gamma} |\F \chi_j u| }{1}.
$$
\end{definition}

The following $L^2$ boundedness result is from page 8 of \cite{SjostrandWiener} . 
\begin{lemma}\label{swl2bound}
If $a \in S_w$ then $\Op(a)$ is bounded on $L^2$ and 
$$
\| \Op(a) \|_{L^2 \ra L^2} \leq \Sw{a}. 
$$
\end{lemma}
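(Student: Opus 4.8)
The statement is cited from \cite{SjostrandWiener}, so the plan is to reproduce the heart of Sj\"ostrand's argument. After the rescaling $\eta\mapsto h\eta$ used in the proof of Lemma \ref{calderonvaillancourt} it suffices to treat the standard ($h=1$) Weyl quantization, with the $S_w$ norm taken with respect to the unscaled Fourier transform; the periodicity in $x$ causes no difficulty since $\Op(a)$ for an $x$--periodic $a$ descends to $\Sb^1$ as recalled in Appendix \ref{pseudosection}. Two ingredients are needed. The first is the elementary observation that a plane--wave symbol $e_\zeta(x,\xi)=e^{i(x\theta+s\xi)}$, $\zeta=(\theta,s)\in\Rb^2$, has unitary Weyl quantization: a direct evaluation of the oscillatory integral shows that $\Op(e_\zeta)$ is a phase--space translation composed with a unimodular phase. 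Consequently any symbol $b$ with $\F b\in L^1$ can be written as a superposition $\Op(b)=c_0\int \F b(\zeta)\,\Op(e_\zeta)\,d\zeta$ with $c_0$ a dimensional constant, whence $\nm{\Op(b)}_{\Ls(L^2)}\le c_0\nm{\F b}_{L^1}$. The second, and the real content, is an almost--orthogonality (Cotlar--Stein) argument upgrading this to the class $S_w$, which strictly contains $\F L^1$.

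For the second step I would decompose $a=\sum_{j\in\Gamma}a_j$ with $a_j=\chi_j a$, so that each $a_j$ is concentrated near the lattice point $j$ in phase space and $\Op(a)=\sum_j\Op(a_j)$. By the adjoint and composition formulas of Lemma \ref{exoticcompose} (applied to the Schwartz symbols $a_j$), the symbol of $\Op(a_j)^*\Op(a_k)$, respectively $\Op(a_j)\Op(a_k)^*$, is a Moyal product of two bump--type symbols localized near $j$ and near $k$; an integration--by--parts / non--stationary--phase estimate in the Moyal integral, exploiting the Schwartz decay of $\chi_0$, then shows that $\nm{\Op(a_j)^*\Op(a_k)}$ decays rapidly in the distance between the $x$--components of $j$ and $k$, and $\nm{\Op(a_j)\Op(a_k)^*}$ decays rapidly in the distance between the $\xi$--components, with each of these norms bounded by $c_{j-k}\,\Sw{a}$ for a fixed sequence $(c_m)_{m\in\Gamma}$ with $\sum_m c_m^{1/2}<\infty$. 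Cotlar--Stein then gives
$$
\nm{\Op(a)}_{\Ls(L^2)}\le\sup_j\sum_k\Bigl(\nm{\Op(a_j)^*\Op(a_k)}^{1/2}+\nm{\Op(a_j)\Op(a_k)^*}^{1/2}\Bigr)\le C\,\Sw{a},
$$
and a careful accounting of the constants (using the freedom in the choice of $\chi_0$; for every application in this paper any absolute constant in place of $1$ would already suffice) recovers the stated inequality. An alternative route is to identify $S_w$ with the modulation space $M^{\infty,1}(\Rb^2)$ and invoke the Gr\"ochenig--Heil theorem on $L^2$--boundedness of Weyl operators with $M^{\infty,1}$ symbols, whose proof pairs $\Op(a)f$ with $g$ through short--time Fourier transforms and closes with Young's inequality.

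The main obstacle is precisely this off--diagonal estimate: one must extract genuine decay in $|j-k|$ from the Moyal product of the two localized symbols \emph{and} bound the decay constants by the single quantity $\Sw{a}=\nm{\sup_j|\F(\chi_j a)|}_{L^1}$ rather than by $\sum_j\nm{\F(\chi_j a)}_{L^1}$, which need not even be finite. This is the point where the supremum over lattice translates in the definition of $S_w$ — as opposed to a sum — is used in an essential way, and arranging the estimate so that only this supremum enters is the technical heart of the argument.
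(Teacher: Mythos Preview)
The paper gives no proof of this lemma; it is a direct citation of page~8 of \cite{SjostrandWiener}. Your proposal therefore goes well beyond what the paper does by attempting to reconstruct Sj\"ostrand's argument.

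As a sketch your outline is in the right spirit, but the off-diagonal claim is not internally consistent: you assert that $\nm{\Op(a_j)^*\Op(a_k)}$ decays only in the $x$-component of $j-k$ and $\nm{\Op(a_j)\Op(a_k)^*}$ only in the $\xi$-component, yet also that both are bounded by $c_{j-k}\Sw{a}$ with $\sum_{m\in\Gamma}c_m^{1/2}<\infty$. Over a lattice in $\Rb^{2n}$, one-directional decay makes that sum diverge along the orthogonal direction, and Cotlar--Stein would not close as written. In fact each $a_j$ is microlocalized in both position and frequency, and a correct stationary/non-stationary phase estimate on the Moyal product yields decay in the full phase-space distance $|j-k|$ for \emph{both} products; the genuine subtlety---which you correctly isolate in your final paragraph---is arranging the constants so that only $\lp{\sup_j|\F(\chi_j a)|}{1}$ appears rather than a sum over $j$. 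Your alternative route via the identification of $S_w$ with the modulation space $M^{\infty,1}$ and the Gr\"ochenig--Heil boundedness theorem is a correct and self-contained substitute. For the purposes of this paper, however, simply quoting Sj\"ostrand (as the paper does) is all that is needed.
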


If $k$ is taken large enough then $W^k S^m(\Rn)$ is contained in $S_w(\Rn)$. Note that this result is stated on a more general space than $\Ps$. This is because in the computation of an expansion of the composition of symbols $a,b$ there is an intermediate step where $c(x,\xi,y,\eta)=a(x,\xi) b(y,\eta)$ is considered as a symbol on $\Ps \times \Ps$, which can be thought of as $\Rb^4$.
\begin{lemma}\label{swtofinitelemma}
If $a \in W^kS^m_{\rho}(\Rn)$ for $k \geq n+1$ then $a \in S_w(\Rn)$ and 
$$
\Sw{a} \leq C \sup_{|\gamma| \leq n+1} \lp{ \p^{\gamma} a}{\infty}. 
$$
\end{lemma}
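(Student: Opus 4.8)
The plan is to estimate the Fourier transform $\F(\chi_j a)(\xi)$ uniformly over the lattice points $j \in \Gamma$ and then integrate in $\xi$; the hypothesis $k \geq n+1$ is precisely what produces decay in $\xi$ of order strictly greater than $n$, which is what makes $U(\xi) = \sup_{j \in \Gamma}|\F(\chi_j a)(\xi)|$ lie in $L^1(\Rn)$, hence puts $a$ in $S_w$.

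First I would record the elementary estimate, valid for any $g$ with bounded derivatives up to order $n+1$ and with $g \in L^1(\Rn)$,
\[
(1+|\xi|)^{n+1}\,|\F g(\xi)| \leq C_n \sum_{|\gamma| \leq n+1} \lp{\p^{\gamma} g}{1},
\]
which is immediate from $\xi^{\gamma}\F g = \F(\p^{\gamma}g)$ together with $\lp{\F(\p^{\gamma}g)}{\infty} \leq \lp{\p^{\gamma}g}{1}$. I apply this with $g = \chi_j a$, which is legitimate since $\chi_0 \in \Sc(\Rn)$ forces $\chi_j a \in L^1(\Rn)$ and $\p^\gamma(\chi_j a) \in L^1(\Rn)$ for $|\gamma| \leq n+1$ (here one uses that $\p^\delta a \in L^\infty$ for $|\delta| \leq k$, in particular that $a$ itself is bounded, which is the substance of the hypothesis; the growth order $m$ is immaterial as long as the right-hand side of the claimed bound is finite). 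The Leibniz rule together with $\lp{\p^{\gamma'}\chi_j}{1} = \lp{\p^{\gamma'}\chi_0}{1}$, which is independent of $j$, then gives
\[
\lp{\p^{\gamma}(\chi_j a)}{1} \leq \sum_{\gamma' \leq \gamma}\binom{\gamma}{\gamma'}\lp{\p^{\gamma'}\chi_0}{1}\,\lp{\p^{\gamma-\gamma'}a}{\infty} \leq C_{\chi_0}\sup_{|\delta|\leq n+1}\lp{\p^{\delta}a}{\infty}.
\]
Combining the two displays yields $U(\xi) \leq C(1+|\xi|)^{-(n+1)}\sup_{|\delta|\leq n+1}\lp{\p^{\delta}a}{\infty}$, with $C$ depending only on $n$ and $\chi_0$.

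To conclude I would note that $(1+|\xi|)^{-(n+1)}$ is integrable on $\Rn$ because $n+1 > n$, so $U \in L^1(\Rn)$, i.e. $a \in S_w$, and integrating the last bound yields $\Sw{a} = \lp{U}{1} \leq C\sup_{|\gamma|\leq n+1}\lp{\p^{\gamma}a}{\infty}$, the constant depending only on $n$, the lattice $\Gamma$ and $\chi_0$. I do not anticipate a genuine obstacle: the only points that need care are the uniformity in $j$ of the localization constants, which is automatic since each $\chi_j$ is a translate of the fixed Schwartz function $\chi_0$, and the bookkeeping that every derivative entering the argument is of order at most $n+1$, matching both the regularity hypothesis $k \geq n+1$ and the asserted bound.
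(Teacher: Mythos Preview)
Your proof is correct and follows essentially the same approach as the paper: both multiply and divide by $\langle\xi\rangle^{n+1}$, use integrability of $\langle\xi\rangle^{-(n+1)}$ on $\Rn$, and control $\xi^{\alpha}\F(\chi_j a)$ by $\lp{\p^{\alpha}(\chi_j a)}{1}$, with uniformity in $j$ coming from the fact that each $\chi_j$ is a translate of $\chi_0$. Your Leibniz-rule bookkeeping is in fact slightly more careful than the paper's final display, which suppresses the derivatives landing on $\chi_j$.
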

\begin{proof}
Starting with the definition of $\Sw{\cdot}$
\begin{align*}
\Sw{a} &= \int_{\Rn} \sup_{j \in \Gamma} |\F(\chi_j u)(\xi)| d\xi \\
&=\int_{\Rn} \<\xi\>^{n+1} \<\xi\>^{-(n+1)} \sup_{j \in \Gamma} |\F(\chi_j u)(\xi)| d\xi \\
&\leq C \lp{\<\xi\>^{n+1} \sup_{j \in \Gamma} |\F(\chi u)(\xi)| }{\infty} \leq \sup_{|\alpha| \leq n+1} \lp{\xi^{\alpha} \sup_{j \in \Gamma} |\F(\chi_j u)(\xi)|}{\infty}.
\end{align*}
Where the integrability of $\<\xi\>^{-(n+1)}$ on $\Rn$ gives the first inequality. Then 
\begin{align*}
\sup_{|\alpha| \leq n+1} \lp{\xi^{\alpha} \sup_{j \in \Gamma} |\F(\chi_j u)(\xi)|}{\infty} &\leq \sup_{|\alpha| \leq n+1} \lp{ \sup_{j \in \Gamma} | \xi^{\alpha} \F(\chi_j u)(\xi)|}{\infty} \\
&= \sup_{|\alpha| \leq n+1} \lp{ \sup_{j \in \Gamma} | \F( \p^{\alpha}(\chi_j u))(\xi)|}{\infty}. \\
\end{align*}
and 
$$
|\F(\p^{\alpha} (\chi_j u))(\xi)| = \left|\int e^{-ix\xi} \p^{\alpha}(\chi_j u) dx \right| \leq \int |\p^{\alpha} (\chi_j u)| dx \leq C \lp{\p^{\alpha} u}{\infty} \int \chi_j dx.
$$
\end{proof}

The following lemma gives an exact calculation of the regularity required to show the remainder term in a composition is bounded on $L^2$. The $S_w$ symbol class is used here as it allows a more straightforward proof than proceeding directly with $W^k S_{\rho}$ symbols. 
\begin{lemma}\label{dreaded}
If $a,b \in \Sc'(\Rb^{2n})$ with $(\p_y \p_{\xi} - \p_x \p_{\eta})^N a(x,\xi) b(y,\eta) \in S_w(\Rb^{4n})$ for some $N \in \Nb$ and $Q$ is a symmetric nonsingular matrix define 
$$
R_N(a,b)(x,\xi) = \int_0^1 (1-t)^{N-1} e^{it h \<QD,D\>} \left(\p_y \p_{\xi} - \p_x \p_{\eta}\right)^N (a(x,\xi; h) b(y, \eta; h)) dt \bigg|_{y=x, \eta=\xi}.
$$
then for $h$ chosen small enough
\begin{equation}\label{eq:dreaded1}
\Sw{R_N(a,b)} \leq C \Sw{ \left(\p_y \p_{\xi} - \p_x \p_{\eta}\right)^N (a(x,\xi) b(y,\eta))}.
\end{equation}
Therefore $\Op(R_N)$ is bounded as an operator on $L^2(\Rb^n)$ with 
$$
\nm{\Op(R_N)}_{L^2 \ra L^2} \leq \Sw{ \left(\p_y \p_{\xi} - \p_x \p_{\eta}\right)^N (a,b)} \leq \sup_{|\gamma|\leq4n+1} \lp{\p^{\gamma} \left(\p_y \p_{\xi} - \p_x \p_{\eta}\right)^N (a,b)}{\infty}.
$$
\end{lemma}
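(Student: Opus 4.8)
The plan is to estimate $\Sw{R_N(a,b)}$ directly and then feed the bound into Lemmas \ref{swl2bound} and \ref{swtofinitelemma}. Write $c(x,\xi,y,\eta) = (\p_y\p_\xi - \p_x\p_\eta)^N\bigl(a(x,\xi;h)\,b(y,\eta;h)\bigr)$, which by hypothesis lies in $S_w(\Rb^{4n})$; pulling the final evaluation inside the (linear) integral,
\[
R_N(a,b)(x,\xi) = \int_0^1 (1-t)^{N-1}\,\bigl(e^{ith\langle QD,D\rangle}c\bigr)\big|_{y=x,\ \eta=\xi}\,dt .
\]
Thus $R_N$ is built from $c\in S_w(\Rb^{4n})$ by three operations: apply the quadratic-phase operator $e^{ith\langle QD,D\rangle}$ in all $4n$ variables; restrict to the diagonal $\{y=x,\ \eta=\xi\}$; and average over $t\in[0,1]$ against $(1-t)^{N-1}$. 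It is enough to see that each of these is bounded on (resp.\ between) the appropriate $S_w$ spaces.

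Two facts---the technical heart of Sjöstrand's composition theory in \cite{SjostrandWiener}---supply this. First, since $Q$ is real and symmetric, $\langle QD,D\rangle$ is the Fourier multiplier with real quadratic symbol $\langle Q\zeta,\zeta\rangle$, and $e^{ith\langle QD,D\rangle}$ acts on the Fourier transform side as multiplication by the chirp $e^{ith\langle Q\zeta,\zeta\rangle}$; such an operator is bounded on $S_w(\Rb^{4n})$, with operator norm bounded uniformly for $t\in[0,1]$ provided $h$ stays bounded---in particular for $h$ small enough, which is where that hypothesis of the lemma enters. (The nonsingularity of $Q$ is not needed for this step; it is what makes the integral remainder representation of the twisted product exact.) Second, restriction of an $S_w$ function to a linear subspace is bounded, so the diagonal restriction map $S_w(\Rb^{4n})\to S_w(\Rb^{2n})$ is bounded. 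Combining these with Minkowski's inequality for the $S_w$-valued integral in $t$,
\[
\Sw{R_N(a,b)} \le \int_0^1 (1-t)^{N-1}\,\bigl\| \bigl(e^{ith\langle QD,D\rangle}c\bigr)\big|_{\mathrm{diag}} \bigr\|_{S_w}\,dt \le \frac{C}{N}\,\Sw{c},
\]
which is \eqref{eq:dreaded1} (the constant is absorbed into the $\le$ of the statement and can be tracked through the two bounds above).

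It remains to read off the $L^2$ consequence. Lemma \ref{swl2bound} gives $\nm{\Op(R_N)}_{L^2\ra L^2}\le \Sw{R_N(a,b)}\le C\,\Sw{c}$, and Lemma \ref{swtofinitelemma}, applied on $\Rb^{4n}$ (so $W^{4n+1}S^m_\rho(\Rb^{4n})\subset S_w(\Rb^{4n})$ with the corresponding norm bound), gives $\Sw{c}\le C\sup_{|\gamma|\le 4n+1}\lp{\p^\gamma c}{\infty}$, which is the last inequality of the statement.

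The step I expect to be the main obstacle is the boundedness of the chirp multiplier $e^{ith\langle QD,D\rangle}$ on the Sjöstrand class $S_w$, with a norm that does not blow up while $h$ stays bounded. This is precisely the estimate that makes $S_w$ closed under the twisted product in \cite{SjostrandWiener}; unwound through the definition of $\Sw{\cdot}$ it requires controlling, uniformly in the lattice point $j$, the convolution of $\widehat{\chi_j c}$ with the Fourier transform of the chirp and then integrating in the frequency variable. The other two operations---diagonal restriction and the elementary $t$-average---together with the passage to $L^2$ and then to finitely many sup norms via Lemmas \ref{swl2bound} and \ref{swtofinitelemma}, are routine given the tools already assembled in this appendix.
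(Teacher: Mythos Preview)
Your proof is correct and follows essentially the same route as the paper's. Both arguments rest on the same two Sj\"ostrand facts---that $e^{ith\langle QD,D\rangle}$ acts boundedly on $S_w$ with control for $h$ small (Theorem 1.4 and (1.21) of \cite{SjostrandWiener}) and that restriction to a linear subspace is bounded on $S_w$---and then pass to $L^2$ and to sup norms via Lemmas \ref{swl2bound} and \ref{swtofinitelemma}; you simply unpack the first step into ``uniform bound on the chirp $+$ Minkowski in $t$'' where the paper invokes the combined statement from \cite{SjostrandWiener} directly.
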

\begin{proof}
By \cite{SjostrandWiener} Theorem 1.4 and equation (1.21) (pg. 7), for any $\e>0$ there exists $h_0>0$ such that for $h<h_0$ 
$$
\Sw{\int_0^1 e^{ith\<QD,D\>} \left(\p_y \p_{\xi}-\p_x \p_{\eta}\right)^N (a,b) dt - C \left(\p_y \p_{\xi}-\p_x \p_{\eta}\right)^N (a,b)} \leq \e.
$$
Then \eqref{eq:dreaded1} follows by choosing $\e<\Sw{\left(\p_y \p_{\xi}-\p_x \p_{\eta}\right)^N (a,b)}$ and using the fact that restriction to a linear subspace (i.e. setting $y=x, \eta=\xi$) is bounded on $S_w$ (bottom of page 2 in \cite{SjostrandWiener}). The $L^2$ bound then follows by Lemmas \ref{swl2bound} and \ref{swtofinitelemma}, where $|\gamma|\leq4n+1$ because $a(x,\xi)b(y,\eta)$ is a function on $\Rb^{4n}$.
\end{proof}
The following lemma is a composition expansion result for low regularity symbols. The expansion is of the same form as Lemma \ref{exoticcompose}, but the remainder term is larger by a factor of $h^{-5\rho}$. 
\begin{lemma}\label{lowregcompose} Suppose $N \geq 6 \in \Nb$ is some fixed constant and $a, b$ are distributions  
\begin{enumerate} 
	\item  If $b \in W^{N,\infty}(\mathbb{S}^1)$, let $\ti{N} \leq N-5$ and assume for some $\rho \in [0,1/2)$ that $a \in W^{5} S_{\rho}(\Ps)$. Then 
\begin{align*}
\Op(a) \Op(b) &= \sum_{k=0}^{\ti{N}-1} \frac{(ih)^k}{2^k k!} \Op(\p_{\xi}^k a(x,\xi) \p_x^k b(x)) + \Olt(h^{\ti{N}(1-\rho)-5\rho}).\\
\Op(b) \Op(a) &= \sum_{k=0}^{\ti{N}-1} \frac{(ih)^k}{2^k k!}(-1)^k \Op(\p_{\xi}^k a(x,\xi) \p_x^k b(x)) + \Olt(h^{\ti{N}(1-\rho)-5\rho}).
\end{align*}

	\item If for some $\rho \in [0,1/2), b \in W^N S_{\rho}(\Ps)$ and $a \in S_{\rho}(\Ps)$ let $\ti{N} \leq N-5$ then 
	\begin{align*}
	\Op(a) \Op(b) &= \sum_{k=0}^{\ti{N}-1} \frac{(ih)^k}{2^k k!}  \Op\left((\p_y \p_{\xi} - \p_x \p_{\eta})^k a(x,\xi) b(y,\eta) \bigg|_{y=x, \eta=\xi} \right) + \Olt(h^{\ti{N}(1-\rho)-5\rho}).\\
	\Op(b) \Op(a) &= \sum_{k=0}^{\ti{N}-1} \frac{(ih)^k}{2^k k!}  \Op\left((\p_y \p_{\xi} - \p_x \p_{\eta})^k b(x,\xi) a(y,\eta) \bigg|_{y=x, \eta=\xi} \right) + \Olt(h^{\ti{N}(1-\rho)-5\rho}).\\
	\end{align*}
	
	\item If for some $\rho \in [0,1/2), b \in  W^N S_0(\Ps)$ and $a \in W^N S_{\rho, \rho}(\Ps)$ let $\ti{N} \leq N-5$ then 
	\begin{align*}
	\Op(a) \Op(b) &= \sum_{k=0}^{\ti{N}-1} \frac{(ih)^k}{2^k k!}  \Op\left((\p_y \p_{\xi} - \p_x \p_{\eta})^k a(x,\xi) b(y,\eta) \bigg|_{y=x, \eta=\xi} \right) + \Olt(h^{\ti{N}(1-\rho)-5\rho}).\\
	\end{align*}
\end{enumerate}
\end{lemma}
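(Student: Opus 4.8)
The plan is to obtain all three statements from the exact Weyl composition formula together with the remainder estimate of Lemma \ref{dreaded}, so that the only real work is bookkeeping which variables carry derivatives and hence factors of $h^{-\rho}$. Concretely, I would start from the identity
\[
\Op(a)\Op(b) = \sum_{k=0}^{\tilde N - 1}\frac{(ih)^k}{2^k k!}\Op\!\left((\p_y\p_\xi - \p_x\p_\eta)^k\big(a(x,\xi)b(y,\eta)\big)\Big|_{y=x,\,\eta=\xi}\right) + h^{\tilde N}\Op\big(R_{\tilde N}(a,b)\big),
\]
where $R_{\tilde N}(a,b)$ is exactly the Taylor remainder appearing in Lemma \ref{dreaded} (with $Q$ the matrix of the symplectic form), produced by expanding $e^{ith\langle QD,D\rangle}$ to order $\tilde N$. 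In part 1 the factor $b$ does not depend on $\xi$, so $\p_x\p_\eta$ annihilates $a(x,\xi)b(y)$ and only the pure $(\p_y\p_\xi)^k$ term of the binomial expansion of $(\p_y\p_\xi-\p_x\p_\eta)^k$ survives; after restriction this is $\p_\xi^k a\,\p_x^k b$, giving precisely the stated expansion, and the $\Op(b)\Op(a)$ version (with the sign $(-1)^k$) follows the same way with the roles of the two factors exchanged. In parts 2 and 3 both factors depend on $\xi$, so the terms stay in the form $(\p_y\p_\xi - \p_x\p_\eta)^k(a\otimes b)|_{y=x,\,\eta=\xi}$, as stated. Thus everything reduces to bounding $h^{\tilde N}\Op(R_{\tilde N}(a,b))$ on $L^2$.

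For that I would invoke Lemma \ref{dreaded} with $n=1$, so that $a(x,\xi)b(y,\eta)$ is a function on $\Rb^4$ and the $L^2$ bound costs up to $4n+1 = 5$ extra derivatives:
\[
\|\Op(R_{\tilde N}(a,b))\|_{L^2\to L^2} \le C\sup_{|\gamma|\le 5}\big\|\p^\gamma (\p_y\p_\xi - \p_x\p_\eta)^{\tilde N}\big(a(x,\xi)b(y,\eta)\big)\big\|_{L^\infty}.
\]
Expanding $(\p_y\p_\xi - \p_x\p_\eta)^{\tilde N}$ binomially, each summand places at most $\tilde N$ derivatives on $a$ (split between $x$ and $\xi$) and at most $\tilde N$ on $b$ (split between $y$ and $\eta$), and $\p^\gamma$ adds at most $5$ more, distributed arbitrarily. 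This is where $\tilde N \le N-5$ is used: in part 1 it keeps the number of $x$-derivatives on $b$ at most $\tilde N + 5 \le N$ (while $a$ needs only $\le 5$ derivatives in $x$, covered by $a\in W^5 S_\rho$); in part 2 it keeps the $y$-derivatives on $b$ at most $N$; in part 3 it keeps the low-regularity derivatives on both $a$ and $b$ at most $N$. Counting the $h$-loss, each $\xi$-derivative of $a$ costs $h^{-\rho}$ (likewise each $x$-derivative of $a$ in the $S_{\rho,\rho}$ case of part 3, and each $\eta$-derivative of $b$ in parts 2 and 3), while the $\langle\xi\rangle^{-1}$ and $\langle\eta\rangle^{-1}$ decay only helps, so the worst case is $h^{-\rho(\tilde N + 5)}$. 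Multiplying by the prefactor $h^{\tilde N}$ gives $\|h^{\tilde N}\Op(R_{\tilde N}(a,b))\|_{L^2\to L^2} \le C h^{\tilde N(1-\rho) - 5\rho}$, which is the asserted remainder; the three parts differ only in which variables carry the $h^{-\rho}$, not in the count.

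The main obstacle is the first step: the composition identity above is classical for Schwartz or smooth symbols, but here $a$ and $b$ are only distributions of finite regularity, so one must first make sense of $\Op(a)\Op(b)$ and of the expansion in this generality. This is precisely what the $S_w$ calculus of \cite{SjostrandWiener} provides: by Lemma \ref{swtofinitelemma}, finite-regularity symbols of the stated classes (with $\rho\in[0,1/2)$, which is also what is required for Lemma \ref{dreaded} to apply) embed into spaces on which the Weyl product and its Taylor expansion are continuous, so one proves the identity for Schwartz symbols and passes to the limit, with the remainder controlled uniformly by Lemma \ref{dreaded}. Once this is in place the derivative count above closes all three cases; the $S^0_\rho$ versus $W^k S_\rho$ versus $W^k S_{\rho,\rho}$ distinction between parts 1, 2 and 3 only changes which of $a$ and $b$ needs the regularity margin and which variables produce the $h^{-\rho}$ factors.
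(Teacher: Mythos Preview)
Your proposal is correct and follows essentially the same approach as the paper: establish the exact Weyl composition formula for low-regularity symbols via Sj\"ostrand's $S_w$ calculus (using Lemma \ref{swtofinitelemma} to embed $W^kS_\rho$ into $S_w$), Taylor expand $e^{\frac{ih}{2}\langle QD,D\rangle}$ to order $\tilde N$, and bound the remainder using Lemma \ref{dreaded} together with the derivative count $\tilde N+5\le N$ and the $h^{-\rho}$ bookkeeping. The paper only writes out part 1 in detail and declares the other parts analogous, exactly as you do.
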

Cases 1 and 2 are stated separately to emphasize that when one symbol depends only on $x$, less regularity is required of the other symbol. Also note that in Case 3 one of the symbols does not produce unfavorable powers of $h$ under differentiation.

\begin{proof}
The proof relies on \cite{SjostrandWiener}, although special attention is paid to the minimal regularity necessary. Only the proof of the first part of 1) will be show, the other parts follow by analogous arguments. 

Set $c(x,\xi,y,\eta)=a(x,\xi)b(y,\eta)$ and let $Q$ be the symmetric nonsingular matrix given by $\<QD, D\>_{\Rb^4} = \<D_{\xi}, D_y\>_{\Rb^2} -\<D_{\eta}, D_x\>_{\Rb^2}$ where $D=(D_x, D_{\xi}, D_y, D_{\eta})$. I will first show that 
\begin{equation}\label{compose1}
\Op(a)\Op(b) = \Op((e^{\frac{ih}{2} \<QD,D\>} c)(x,\xi, x,\xi)),
\end{equation}
and then provide an expansion of the right hand side of the desired form. 

To begin since $a,b \in W^{k} S_{\rho}(\Ps)$ for $k \geq =2+1=3$,  then $a,b \in S_w(\Ps)$ by Lemma \ref{swtofinitelemma}. Therefore by \cite{SjostrandWiener} [Theorem 2.2, and the discussion on pages 7-8] \eqref{compose1} holds, where $e^{\frac{ih}{2} \<QD,D\>}$ is defined as the unique extension from $\Sc$.

So it remains to provide an expansion of $(e^{\frac{ih}{2}\<QD,D\>} c)(x,\xi, x,\xi)$. Well using a standard Taylor expansion of $e^{\frac{ih}{2}\<QD,D\>}$ as in \cite{SjostrandWiener} equation (1.20)

\begin{align*}
e^{\frac{ih}{2}\<QD,D\>} c(x,\xi,y,\eta) &= \sum_{k=0}^{\ti{N}-1} \frac{(ih)^k}{2^k k!} \<QD, D\>^k c(x,\xi,y,\eta)\\
&+\frac{(ih)^{\ti{N}}}{2^{\ti{N}}\ti{N}!} \int_0^1 (1-t)^{\ti{N}-1}e^{\frac{ih}{2}\<QD,D\>} \<QD, D\>^{\ti{N}} c(x,\xi,y,\eta) dt \\
&= \sum_{k=0}^{\ti{N}-1} \frac{(ih)^k}{2^k k!} (\p_y \p_{\xi} - \p_x \p_{\eta})^k a(x,\xi) b(y)\\
&+\frac{(ih)^{\ti{N}}}{2^{\ti{N}}\ti{N}!} \int_0^1 (1-t)^{\ti{N}-1}e^{\frac{ih}{2}\<QD,D\>} (\p_y \p_{\xi} - \p_x \p_{\eta})^{\ti{N}} a(x,\xi) b(y) dt \\
&= \sum_{k=0}^{\ti{N}-1} \frac{(ih)^k}{2^k k!} \p_{\xi}^k a(x,\xi) \p^k_y b(y)+\frac{(ih)^{\ti{N}}}{2^{\ti{N}}\ti{N}!} \int_0^1 (1-t)^{\ti{N}-1}e^{\frac{ih}{2}\<QD,D\>}  \p_{\xi}^{\ti{N}} a(x,\xi) \p_{y}^{\ti{N}} b(y) dt. 
\end{align*}
Therefore 
$$
e^{\frac{ih}{2}\<QD,D\>} c(x,\xi, x,\xi) = \sum_{k=0}^{\ti{N}-1} \frac{(ih)^k}{2^k k!} \p_{\xi}^k a(x,\xi) \p_x^k b(x) + \frac{(ih)^{\ti{N}}}{2^{\ti{N}} \ti{N}!} \int_0^1 (1-t)^{\ti{N}-1} e^{\frac{ih}{2} \<QD,D\>} \p_{\xi}^{\ti{N}}a(x,\xi) \p_y^{\ti{N}} b(y) dt |_{y=x, \eta=\xi}.
$$
First consider the terms in the sum. Since $a$ is smooth in $\xi$, $b \in W^{N,\infty}$ and $k \leq \ti{N}-1 \leq N-6$, for $\alpha \leq 1$ and $\theta \in \Nb$
$$
|\p_{\xi}^{k+\theta} a(x,\xi) \p_x^{k+\alpha} b(x)| < \infty.
$$
In particular each term in the sum is in $W^1 S_{\rho}(\Ps)$ and so quantizing it produces an operator bounded on $L^2$ by Lemma \ref{calderonvaillancourt}.

Now consider the integral term. By Lemma \ref{dreaded} it is in $S_w$ and its quantization is bounded on $L^2$ by 
\begin{align*} 
&C h^{\ti{N}} \sup_{x,y,\xi,\eta, |\gamma| \leq 5}  \left|\p^{\gamma} (\p_{y} \p_{\xi})^{\ti{N}} (a(x,\xi) b(y) ) \right| \\
&\leq C h^{\ti{N}} \sup_{x,y,\xi,\eta} \sum_{\gamma_1+\gamma_2+ \gamma_3 \leq 5}\left| \p_y^{\ti{N}+\gamma_1} b(y) \p_x^{\gamma_2} \p_{\xi}^{\ti{N}+\gamma_3} a(x,\xi) \right| \\
&\leq C h^{\ti{N}} h^{-\rho(\ti{N}+5)},
\end{align*}
where the final inequality holds because $a \in W^{5} S_{\rho}(\Ps)$ and $b \in W^{N, \infty}(\Sb^1)$ with $\ti{N} +5 \leq N$.
\end{proof}

The following lemma calculates the size of errors from introducing cutoff operators. It is a key tool used to take advantage of symbols with support contained in regions of phase space where good estimates hold, namely the elliptic set of $P$ and the support of $W$. 
\begin{lemma}\label{commutebump}
Fix $N \in \Nb, N \geq 6$ and $\rho \in [0,1/2)$. Suppose $ b \in W^{N}S_{\rho}(\Ps)$ and $t \in S^{0}_{\rho}(\Ps)$, such that $t \equiv 1$ on $\supp b$, then
\begin{enumerate}
\item
\begin{align*}
\Op(t) \Op(b) &= \Op(b) + \Olt(h^{N(1-\rho)-5})\\
 \Op(b) \Op(t) &=\Op(b) + \Olt(h^{N(1-\rho)-5}).
\end{align*}
\item $$
\Op(t) \Op(b) \Op(t) = \Op(b) + \Olt(h^{N(1-\rho)-5}).
$$ 
\end{enumerate}
\end{lemma}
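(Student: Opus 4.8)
The plan is to apply the low-regularity composition expansion of Lemma~\ref{lowregcompose} to $\Op(t)\Op(b)$, observe that every term of order $k \geq 1$ in that expansion vanishes because $t$ is locally constant on $\supp b$, and then deduce the remaining statements by the analogous computation for $\Op(b)\Op(t)$ and by iteration. First I would apply Lemma~\ref{lowregcompose} part~2 with $a = t$ (which lies in $S^0_{\rho}(\Ps)$, hence qualifies as the smooth symbol there) and the finite-regularity symbol $b \in W^N S_{\rho}(\Ps)$, taking $\ti{N} = N - 5$:
\begin{equation*}
\Op(t)\Op(b) = \sum_{k=0}^{N-6} \frac{(ih)^k}{2^k k!} \Op\left((\p_y\p_{\xi} - \p_x\p_{\eta})^k t(x,\xi) b(y,\eta) \bigg|_{y=x,\,\eta=\xi}\right) + \Olt(h^{(N-5)(1-\rho)-5\rho}).
\end{equation*}
Since $(N-5)(1-\rho) - 5\rho = N(1-\rho) - 5(1-\rho) - 5\rho = N(1-\rho) - 5$, the remainder already has the size claimed in the statement.

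Next I would identify the terms in the sum. Expanding $(\p_y\p_{\xi} - \p_x\p_{\eta})^k$ by the binomial theorem and restricting to the diagonal $y = x, \eta = \xi$, the $k$-th term becomes a finite linear combination of operators of the form $\Op\big((\p_{\xi}^j \p_x^{k-j} t)(\p_x^j \p_{\xi}^{k-j} b)\big)$ with $0 \leq j \leq k$. The $k = 0$ term is $\Op(tb) = \Op(b)$, since $t \equiv 1$ on $\supp b$ gives $tb = b$ as distributions. For $k \geq 1$ each summand contains a factor $\p_{\xi}^j \p_x^{k-j} t$ of order $k \geq 1$; as $t$ is identically $1$ on a neighborhood of $\supp b$, this factor vanishes on $\supp b$, whereas $\p_x^j \p_{\xi}^{k-j} b$ is supported in $\supp b$ (legitimately, since $j \leq k \leq N-6 < N$, so $\p_x^j b$ is a genuine bounded function). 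Hence the product vanishes identically and the term is zero. This proves $\Op(t)\Op(b) = \Op(b) + \Olt(h^{N(1-\rho)-5})$, and $\Op(b)\Op(t) = \Op(b) + \Olt(h^{N(1-\rho)-5})$ follows in exactly the same way from the second composition formula of Lemma~\ref{lowregcompose} part~2.

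For part~2, write $\Op(t)\Op(b)\Op(t) = \big(\Op(t)\Op(b)\big)\Op(t) = \Op(b)\Op(t) + \Olt(h^{N(1-\rho)-5})\Op(t)$; since $t \in S^0_{\rho}(\Ps) \subseteq W^1 S_{\rho}(\Ps)$, Lemma~\ref{calderonvaillancourt} gives that $\Op(t)$ is bounded on $L^2$, so the error term is still $\Olt(h^{N(1-\rho)-5})$, and $\Op(b)\Op(t) = \Op(b) + \Olt(h^{N(1-\rho)-5})$ by part~1. Combining these gives the claim. The only step requiring real care is the vanishing of the order-$k \geq 1$ terms: one must verify that the product of a derivative of $t$ (which is zero near $\supp b$) with a derivative of the distribution $b$ (which is supported in $\supp b$) is identically zero, and keep careful track of how many $x$-derivatives fall on $b$ so as to remain within the $W^N$ regularity --- this is exactly why the expansion is truncated at order $N-6$. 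The remaining bookkeeping of the remainder exponent is routine.
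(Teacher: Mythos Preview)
Your proof is correct and follows essentially the same approach as the paper's: apply Lemma~\ref{lowregcompose} part~2 with $\ti{N}=N-5$, observe that the $k\ge 1$ terms vanish because derivatives of $t$ are zero on $\supp b$ while the $k=0$ term gives $\Op(tb)=\Op(b)$, and then iterate for part~2 using the $L^2$-boundedness of $\Op(t)$ from Lemma~\ref{calderonvaillancourt}. Your write-up is, if anything, slightly more careful than the paper's in tracking why the $x$-derivatives on $b$ stay within the available regularity.
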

\begin{proof}
Well by part 2 of Lemma \ref{lowregcompose}, setting $\ti{N}=N-5$
\begin{align*}
\Op(t) \Op(b) &= \sum_{k=0}^{N-6} \frac{(ih)^k}{2^k k!} \Op\left( (\p_y \p_{\xi} - \p_x \p_{\eta})^k t(x,\xi) b(y,\eta)\bigg|_{y=x,\eta=\xi} \right) + \Olt(h^{(N-5)(1-\rho) - 5\rho}) \\
&= \Op(tb) + \Olt(h^{N(1-\rho)-5}) \\
&= \Op(b) + \Olt(h^{(N(1-\rho)-5}),
\end{align*}
where the terms with $1\leq k \leq N-6$ all vanish, since $\p_{\xi}^k t(x,\xi) = \p_x^k t(x,\xi)=0$ on $\supp b$, and $\Op(tb)=\Op(b)$ as $t \equiv 1$ on $\supp b$. The second equation of part 1 follows by an analogous proof.

To see part 2 use the first half of part 1 of this Lemma  
\begin{align}
\Op(t) \Op(b) \Op(t) &= \left(\Op(t) \Op(b) \right) \Op(t) \nonumber \\
&= \left( \Op(b) + \Olt(h^{N(1-\rho)-5}) \right) \Op(t) \nonumber \\
&= \Op(b) \Op(t) + \Olt(h^{N(1-\rho)-5}) \label{finalequation}
\end{align}
where $\Op(t)$ is bounded on $L^2$ by Lemma \ref{calderonvaillancourt}. Now apply the second half of part 1 of this Lemma to $\Op(b) \Op(t)$ to obtain the desired conclusion. 
\end{proof}

\bibliographystyle{alpha}
\bibliography{mybib}

\begin{thebibliography}{CSVW14}

\bibitem[AL14]{AnantharamanLeautaud2014}
N.~Anantharaman and M.~L{\'e}autaud.
\newblock Sharp polynomial decay rates for the damped wave equation on the
  torus.
\newblock {\em Anal. PDE}, 7(1):159--214, 2014.
\newblock With an appendix by S. Nonnenmacher.

\bibitem[BG97]{BurqGerard1997}
N.~Burq and P.~G\'erard.
\newblock Condition n\'ecessaire et suffisante pour la contr\^olabilit\'e
  exacte des ondes.
\newblock {\em , C. R. Math. Acad. Sci. Paris}, 325(7):749--752, 1997.

\bibitem[BG18]{BurqGerard2018}
N.~Burq and P.~G{\'e}rard.
\newblock Stabilisation of wave equations on the torus with rough dampings.
\newblock {\em arXiv:1801.00983}, 2018.

\bibitem[BH07]{BurqHitrik2007}
N.~Burq and M.~Hitrik.
\newblock Energy decay for damped wave equations on partially rectangular
  domains.
\newblock {\em Mathematical Research Letters}, 14(1):35--47, 2007.

\bibitem[BLR92]{BardosLebeauRauch1992}
C.~Bardos, G.~Lebeau, and J.~Rauch.
\newblock Sharp sufficient conditions for the observation, control, and
  stabilization of waves from the boundary.
\newblock {\em SIAM Journal on Control and Optimization}, 30(5):1024--1065,
  1992.

\bibitem[Bon05]{Bony2005}
J.M. Bony.
\newblock Sommes de carr{\'e}s de fonctions d{\'e}rivables.
\newblock {\em Bulletin de la soci{\'e}t{\'e} math{\'e}matique de France},
  133(4):619--639, 2005.

\bibitem[Bou99]{Boulkhemair1999}
A.~Boulkhemair.
\newblock L2 estimates for weyl quantization.
\newblock {\em Journal of Functional Analysis}, 165(1):173 -- 204, 1999.

\bibitem[BT10]{BorichevTomilov2010}
A.~Borichev and Y.~Tomilov.
\newblock Optimal polynomial decay of functions and operator semigroups.
\newblock {\em Mathematische Annalen}, 347(2):455--478, 2010.

\bibitem[Bur98]{Burq1998}
N.~Burq.
\newblock D\'ecroissance de l'\'energie locale de l'\'equation des ondes pour
  le probl\`eme ext\'erieur et absence de r\'esonance au voisinage du r\'eel.
\newblock {\em Acta Math.}, 180(1):1--29, 1998.

\bibitem[BZ12]{BurqZworski2012}
N.~Burq and M.~Zworski.
\newblock {Control for Schr\"odinger operators on tori}.
\newblock {\em Mathematical Research Letters}, 19(2):309--324, 2012.

\bibitem[BZ19]{BurqZworski2019}
N.~Burq and M.~Zworski.
\newblock {Rough controls for Schr\"odinger operators on tori}.
\newblock {\em Annales Henri Lebesgue}, 2:331--347, 2019.

\bibitem[CSVW14]{csvw}
H.~Christianson, E.~Schenck, A.~Vasy, and J.~Wunsch.
\newblock From resolvent estimates to damped waves.
\newblock {\em J. Anal. Math.}, 121(1):143--162, 2014.

\bibitem[DJN19]{DyatlovJinNonnenmacher}
S.~Dyatlov, L.~Jin, and S.~Nonnenmacher.
\newblock Control of eigenfunctions on surfaces of variable curvature.
\newblock {\em arXiv:1906.08923}, 2019.

\bibitem[DK19]{DatchevKleinhenz2019}
K.~Datchev and P.~Kleinhenz.
\newblock {Sharp polynomial decay rates for the damped wave equation with
  H\"older-like damping}.
\newblock {\em arXiv:1908.05631}, 2019.

\bibitem[DZ16]{DyatlovZahl}
S.~Dyatlov and J.~Zahl.
\newblock Spectral gaps, additive energy, and a fractal uncertainty principle.
\newblock {\em Geometric and Functional Analysis}, 26(4):1011--1094, 2016.

\bibitem[DZ19]{DyatlovZworski2020}
S.~Dyatlov and M.~Zworski.
\newblock {\em Mathematical theory of scattering resonances}, volume 200.
\newblock American Mathematical Soc., 2019.

\bibitem[{Jaf}90]{Jaffard1990}
S.~{Jaffard}.
\newblock {Contr\^ole interne exact des vibrations d'une plaque rectangulaire.
  (Internal exact control for the vibrations of a rectangular plate).}
\newblock {\em {Port. Math.}}, 47(4):423--429, 1990.

\bibitem[Joh02]{JohnsondiBruno}
W.P. Johnson.
\newblock The curious history of fa{\`a} di bruno's formula.
\newblock {\em The American mathematical monthly}, 109(3):217--234, 2002.

\bibitem[Kle19]{Kleinhenz2019}
P.~Kleinhenz.
\newblock {Stabilization Rates for the Damped Wave Equation with
  H\"older-Regular Damping}.
\newblock {\em Commun. Math. Phys.}, 369(3):1187--1205, 2019.

\bibitem[Leb96]{Lebeau1996}
G.~Lebeau.
\newblock Equation des ondes amorties.
\newblock In {\em Algebraic and Geometric Methods in Mathematical Physics:
  Proceedings of the Kaciveli Summer School, Crimea, Ukraine, 1993}, pages
  73--109. Springer Netherlands, Dordrecht, 1996.

\bibitem[LR05]{LiuRao2005}
Z.~Liu and B.~Rao.
\newblock Characterization of polynomial decay rate for the solution of linear
  evolution equation.
\newblock {\em Zeitschrift f{\"u}r angewandte Mathematik und Physik ZAMP},
  56(4):630--644, 2005.

\bibitem[{Mac}10]{Macia2010}
F.~{Maci\`a}.
\newblock {High-frequency propagation for the Schr\"odinger equation on the
  torus.}
\newblock {\em {J. Funct. Anal.}}, 258(3):933--955, 2010.

\bibitem[Ral69]{Ralston1969}
J.~Ralston.
\newblock Solutions of the wave equation with localized energy.
\newblock {\em Communications on Pure and Applied Mathematics}, 22(6):807--823,
  1969.

\bibitem[RT75]{RauchTaylor1975}
J.~Rauch and M.~Taylor.
\newblock Exponential decay of solutions to hyperbolic equations in bounded
  domains.
\newblock {\em Indiana Univ. Math. J.}, 24(1):79--86, 1975.

\bibitem[Sj{\"o}95]{SjostrandWiener}
J.~Sj{\"o}strand.
\newblock Wiener type algebras of pseudodifferential operators.
\newblock {\em S{\'e}minaire {\'E}quations aux d{\'e}riv{\'e}es partielles
  (Polytechnique) dit aussi "S{\'e}minaire Goulaouic-Schwartz"}, 1994-1995.
\newblock talk:4.

\bibitem[Sta17]{Stahn2017}
R.~Stahn.
\newblock Optimal decay rate for the wave equation on a square with constant
  damping on a strip.
\newblock {\em Zeitschrift f{\"u}r angewandte Mathematik und Physik}, 68(2):36,
  2017.

\bibitem[Zwo12]{Zworski2012}
M.~Zworski.
\newblock {\em Semiclassical analysis}, volume 138.
\newblock American Mathematical Soc., 2012.

\end{thebibliography}
\end{document}